\newtheorem{proof}{Proof}[section]
\newtheorem{theorem}{Theorem}[section]
\newtheorem{lem}{Lemma}[section]
\newcommand{\bx}{{\bf x}}
\newcommand{\bn}{{\bf n}}
\newcommand{\vep}{\varepsilon}
\numberwithin{equation}{section}
\begin{document}

\begin{frontmatter}

\title{Numerical Study of Quantized Vortex Interaction
in Complex Ginzburg--Landau Equation on Bounded Domains}
\author[1]{Wei Jiang}
\ead{jiangwei1007@foxmail.com}
\address[1]{Beijing Computational Science Research Center, Beijing 100084, P. R. China}

\author[2]{Qinglin Tang}
\ead{tqltql2010@gmail.com}

\address[2]{Department of Mathematics and Center for
Computational Science and Engineering, National University of
Singapore, Singapore 119076, Singapore}


\begin{abstract}
In this paper, we study numerically quantized vortex
dynamics and their interaction in the two-dimensional
complex Ginzburg-Landau equation (CGLE) with a dimensionless
parameter $\vep>0$ on bounded domains under either Dirichlet or
homogeneous Neumann boundary condition. We begin with a
review of the reduced dynamical laws (RDLs) for time evolution of
quantized vortex centers in CGLE and show how to solve these
nonlinear ordinary differential equations numerically.
Then we present efficient and accurate numerical methods for solving
the CGLE on either a rectangular or a disk  domain under
either Dirichlet or homogeneous Neumann boundary condition.
Based on these efficient and accurate numerical methods
for CGLE and the RDLs, we explore rich and complicated
quantized vortex dynamics and interaction of CGLE with different $\vep$ and
under different initial physical setups, including single vortex, vortex pair,
vortex dipole and vortex lattice, compare them with those obtained
from the corresponding RDLs,
and identify the cases where the RDLs
agree qualitatively and/or quantitatively as well as
fail to agree with those from CGLE on vortex interaction.
Finally, we also obtain numerically different patterns of
the steady states for quantized vortex lattices
in the CGLE dynamics on bounded domains.
\end{abstract}


\begin{keyword}
Complex Ginzburg-Landau equation, Quantized vortex dynamics, Bounded domain, Reduced
dynamical laws.
\end{keyword}

\end{frontmatter}


\section{Introduction}
\label{sec: intro}

Vortices are those waves that possess phase singularities
(topological defect) and rotational flows around the singular
points. They arise in many physical areas of different scale and
nature ranging from liquid crystals and superfluidity to
non-equilibrium patterns and cosmic strings \cite{DKT, PS}.
Quantized vortices in the two dimension are those particle-like vortices,
whose centers are the zero of the order parameter, possessing localized phase
singularity with the topological charge (also called as winding
number or index) being quantized. They have been widely observed in
many different physical systems, such as the liquid helium, type-II
superconductors, atomic gases and nonlinear optics \cite{A, BDZ, D,
FHL1, JN}. Quantized vortices are key signatures of the
superconductivity and superfluidity and their study is always one
of the most important and fundamental problems since they were
predicted by Lars Onsager in 1947 in connection with superfluid
Helium.

In this paper, we consider the vortex dynamics and
interactions in two dimensional complex Ginzburg--Landau equation (CGLE), which is
one of the most studied nonlinear equations in physics community \cite{AK}. It has attracted ever more attention,
because it can describe various phenomena ranging
from nonlinear waves to second-order phase transitions, from superconductivity, superfluidity and Bose-Einstein condensation
to liquid crystals and strings in field theory \cite{AK, FPR, GP, RB}. The specific form of  CGLE we study here reads as:
\begin{equation}
\label{cgle}
(\lambda_\vep+i\beta ) \partial_{t}\psi^{\vep} (\bx,t)=\Delta
\psi^{\vep}+\frac{1}{\vep^{2}}
(1-|\psi^{\vep}|^{2})\psi^{\vep},\qquad
\bx\in\mathcal{D},\quad t>0,
\end{equation}
with initial condition
\begin{equation}\label{ini_con}
\psi^{\vep}(\bx,0) = \psi^{\vep}_{0}(\bx), \qquad
  \bx\in\overline{\mathcal D },
\end{equation}
and under either Dirichlet boundary condition (BC)
\begin{equation}\label{dir}
\psi^{\vep}(\bx,t)=g(\bx)=e^{i\omega(\bx)},\qquad
\bx\in\partial\mathcal{D}, \quad t\ge0,
\end{equation}
or homogeneous Neumann BC
\begin{equation}\label{neu-bc}
\frac{\partial \psi^{\vep}(\bx,t)}{\partial \nu} =0,\qquad
\bx\in\partial\mathcal{D}, \quad t\ge0.
\end{equation}
where $\mathcal{D}\subset \mathbb{R}^{2}$ is a
bounded and simple connected domain in the paper, $t$ is time, $\bx=(x,y)\in \mathbb{R}^2$
is the Cartesian coordinate vector,
$\psi^\vep:=\psi^\vep(\bx,t)$ is
a complex-valued wave function (order parameter), $\omega$ is a given real-valued function,
$\psi_0^\vep$ and $g$ are given smooth and  complex-valued functions
satisfying  the compatibility condition $\psi_0^\vep(\bx)=g(\bx)$
for $\bx\in\partial\mathcal{D}$, $\nu=(\nu_1,\nu_2)$ and
$\nu_\perp=(-\nu_2,\nu_1)\in \mathbb{R}^2$ satisfying
$|\nu|=\sqrt{\nu_1^2+\nu_2^2}=1$ are the outward normal and tangent
vectors along $\partial \mathcal{D}$, respectively, $i=\sqrt{-1}$ is the unit
imaginary number, $0<\vep<1$ is a given dimensionless constant, and
$\lambda_\vep$, $\beta$ are two
positive constants. Actually,
the CGLE covers many different equations arise in various different  physical fields.
For example, when $\lambda_\vep\neq0$, $\beta=0$, it reduces to the Ginzburg-Landau equation (GLE)
for modelling superconductivity. When $\lambda_\vep=0$, $\beta=1$,  the CGLE
 collapses to the nonlinear Schr\"{o}dinger equation (NLSE)
for modelling Bose-Einstein Condensation or superfluidity.

Denote the Ginzburg-Landau (GL) functional (`energy') as \cite{CJ, JS, LX}
\begin{equation}
\label{glf}
\mathcal{E}^\vep(t):=\int_{\mathcal{D}}
\left[\frac{1}{2}|\nabla\psi^{\vep}|^2+
\frac{1}{4\vep^2}\left(1-|\psi^{
\vep}|^2\right)^2\right]d\bx
=\mathcal{E}_{\rm kin}^\vep(t)+\mathcal{E}_{\rm int}^\vep(t),
 \quad t\ge0,
\end{equation}
where the kinetic and interaction energies are defined as
\begin{equation*}
\mathcal{E}_{\rm kin}^\vep(t):=\frac{1}{2}\int_{\mathcal{D}}
|\nabla\psi^{\vep}|^2d\textbf{x},\quad \mathcal{E}_{\rm
int}^\vep(t):=\frac{1}{4\vep^2}\int_{\mathcal{D}}
\left(1-|\psi^{\vep}|^2\right)^2d\textbf{x},\quad t\ge0,
\end{equation*}
respectively. Then, it is easy to verify that the CGLE and GLE dissipate the energy, while the NLSE conserves
the energy at all the time.

During the last several decades, constructions and analysis about the vortex solutions
as well as studies of quantized vortex dynamics and interaction related with the CGLE
(\ref{cgle}) under different scalings have been extensively studied in the literatures.
For the whole space case , i.e., $\mathcal{D}=\mathbb{R}^{2}$,
Neu \cite{JN} studied dynamics and interaction of well-separated
quantized vortices for  GLE with $\lambda_\vep=1$ and NLSE under scaling $\vep=1$.
He found numerically that quantized vortices with winding number $m=\pm1$ are dynamically
stable, and respectively, $|m|>1$ dynamically unstable in the GLE dynamics. Moreover, he found
that vortices  behave like point vortices in ideal fluid.  Using asymptotic analysis, he derived the reduced
dynamical laws (RDLs) which are sets of ordinary differential equations (ODEs)
for governing the dynamics of the vortex centers to the leading order. Recently,
Neu's results were extented by Bethuel et al. to investigate the asymptotic behaviour of vortices
as $\vep\to 0$ in the GLE dynamics under the accelerating time scale $\lambda_\vep=\frac{1}{\ln\frac{1}{\vep}}$
\cite{BOS, BOS1, BOS2} and in the NLSE dynamics \cite{BJS}. The corresponding RDLs that govern the motion
of the limiting vortices have also be derived.

Inspired by Neu's work, many other papers have been dedicated to the study of the
vortex states and dynamics for the GLE and NLSE  with $0<\vep<1$ on a
bounded domain under different BCs. For the GLE case, Lin \cite{FHL1,FHL2,FHL3}
considered the the dynamics of vortices in the asymptotic limit  $\vep\to 0$
under various scales of $\lambda_\vep$ and with Dirichlet BC (\ref{dir}) or homogeneous Neumann (\ref{neu-bc}).
He derived the RDLs that govern the motion of these vortices and rigorously proved that
vortices move with velocities of order $|\ln\vep|^{-1}$ if $\lambda_\vep=1$.
Similar studies have also been conducted by E \cite{EW},  Jerrard et al. \cite{RLJHMH},
Jimbo et al. \cite{SJYM2, SJYM3} and  Sandier et al. \cite{ESSS}.
Unfortunately, all those RDLs are only valid up to the  first time that the vortices collide or exit
the domain and cannot describe the motion of multiple degree vortices.
Recently, Serfaty \cite{SE} extended the RDLs for the dynamics of the vortices after collisions.
For the NLSE case,  Mironescu \cite{M}  and Lin \cite{L} investigated stability of the
vortices in NLSE  with (\ref{dir}). Subsequently, Lin and Xin \cite{LX} studied the vortex
dynamics on a bounded domain with either Dirichlet or Neumann BC,
which was further investigated by Jerrard and Spirn \cite{JS}.
In addition, Colliander and Jerrard \cite{CJ, CJ1} studied the
vortex structures and dynamics on a torus or under periodic BC. In these studies,
RDLs were put forth to describe the asymptotic behaviour of the vortices
as $\vep\to 0$, which indicate that
to the leading order the vortices move according to the
Kirchhoff law in the bounded domain case. However, these RDLs cannot indicate radiation and/or sound propagations
created by highly co-rotating or overlapping vortices. In fact, it remains as
a very fascinating and fundamental open problem to understand the
vortex-sound interaction \cite{NBCT}, and how the sound waves modify
the motion of vortices \cite{FPR}.

For the CGLE, under scaling $\lambda_\vep\sim O(\frac{1}{\ln(1/\vep)})$,
Miot \cite{MIOT} studied the dynamics
of vortices asymptotically as $\vep\to 0$ in the whole plane case
and Kurzke et al. \cite{KMMS} investigated that in the bounded domain case, the corresponding RDLs
were derived to govern the motion of the limiting vortices in the
whole plane and/or the bounded domain, respectively.  Their results showed that the RDLs in the CGLE is actually
a hybrid of RDL for GLE and that for NLSE. More recently, Serfaty and Tice \cite{SI} studied the vortex dynamics
in a more complicated CGLE which involves  electromagnetic field and pinning effect.

On the numerical aspects, finite element methods were proposed to investigate numerical
solutions of GLE and related Ginzburg-Landau models
for modelling superconductivity \cite{QDMGJP, QDu0, KM, Aftalion2001, CD}.
Recently, by proposing efficient and accurate
numerical methods for solving the CGLE (\ref{cgle}) in the whole space, Zhang
et al. \cite{YZWBQD1,YZWBQD2} compared the dynamics of quantized
vortices from the RDLs obtained by Neu with those obtained
from the direct numerical simulation results from GLE and NLSE under
different parameters and initial setups. Very recently, The second author designed
some efficient and accurate numerical methods for studying vortex dynamics and interactions
in the GLE and/or NLSE on bounded domains with either Dirichlet
or Neumann BCs \cite{BT, BT1}. These numerical methods can be extended and applied
for studying the rich and complicated phenomena related to
vortex dynamics and interaction in the CGLE (\ref{cgle}) with either Dirichlet BC (\ref{dir}) or
homogeneous Neumann BC (\ref{neu-bc}) on bounded domains.
The main purpose of this paper is organised as:
(i). to present efficient and accurate
numerical methods for solving the RDLs
and the CGLE (\ref{cgle}) on bounded domains
under different BCs;
(ii). to understand numerically how the boundary condition and
geometry of the domain affect vortex dynamics and interction;
(iii). to study numerically vortex interaction in the CGLE dynamics and/or compare
them with those from the RDLs with different initial setups and
parameter regimes;
(iv). to identify cases where the reduced
dynamical laws agree qualitatively and/or quantitatively as well as
fail to agree with those from CGLE on vortex interaction.

The rest of the paper is organized as follows. In section 2, we briefly review the reduced
dynamical laws of vortex interaction under
the CGLE (\ref{cgle}) with either Dirithlet or
homogeneous Neumann BC and
present numerical methods to discretize them. In
section 3, efficient and accurate numerical methods are briefly outlined  for
solving the CGLE on bounded domains with different
BCs. In section 4 and section 5, ample numerical results are reported
for studying vortex dynamics and interaction of CGLE under Dirichlet BC and homogeneous Neumann BC.
Finally, some conclusions are drawn in section 6.

\section{The reduced dynamical laws and their discretization}
\label{sec: rdl}
The CGLE can be thought of as a hybird equation between the GLE and NLSE, and it has been proved that vortices in GLE dynamics
move with a velocity of the order of $\ln(1/\vep)$ if $\lambda_\vep=1$, Therefore, to obtain nontrivial
vortex dynamics, hereafter in this paper, we always choose
\begin{equation}
\label{lam_vep}
\lambda_\vep=\frac{\alpha}{\ln(1/\vep)},\qquad\qquad 0<\vep<1,
\end{equation}
where $\alpha$ is a positive number.
In this section, we review the RDLs for governing the dynamics of vortex centers in the CGLE (\ref{cgle})
with either Dirichlet or homogeneous Neumann BCs.

To simplify our discussion, for $j=1,\cdots,N$, hereafter we let $\bx^0_j(t)=(x^0_j,y^0_j)$ and
$\bx^\vep_j(t)=(x^\vep_j(t),y^\vep_j(t))$
be the location of the $M$ distinct and isolated vortex centers in the intial data $\psi^0$ (\ref{ini_con}) and
solution of the CGLE (\ref{cgle}) with initial condition (\ref{ini_con}) at time $t\ge0$, respectively.
By denoting
\[X^0:=(\bx_1^0,\bx_2^0,\ldots,\bx_M^0), \qquad X^\vep:=X^\vep(t)=(\bx^\vep_1(t),
\bx^\vep_2(t),\ldots,\bx^\vep_M(t)), \quad t\ge0,\]
then we have \cite{KMMS, CJ, FHL3}:
\begin{theorem}
\label{thm: rdl_gen}
As $\vep\rightarrow 0$,  for $j=1,\cdots,N$, the vortex center $\bx^\vep_j(t)$ will converge to
point $\bx_j(t)$ satisfying:
\begin{align}
&\label{eqn: rdl_gen}
(\alpha I+\beta n_{j} J) \frac{d\bx_{j}(t)}{dt}=-\nabla_{\bx_{j}}W(X), \quad 0\leq t<T,\\
& \bx_j(t=0)=\bx^{0}_j.
\end{align}
\end{theorem}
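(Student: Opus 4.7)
The plan is to follow the matched-asymptotic/modulation strategy pioneered by Neu \cite{JN} and made rigorous in the references cited above the theorem, especially \cite{KMMS, CJ, FHL3}. The starting point is to exploit the hybrid structure of the CGLE: with the scaling \eqref{lam_vep}, equation \eqref{cgle} can be split into a real (dissipative, GLE-like) part of size $\alpha/\ln(1/\vep)$ and an imaginary (Hamiltonian, NLSE-like) part of size $\beta$. This is precisely what produces the matrix $\alpha I+\beta n_j J$ on the left-hand side of \eqref{eqn: rdl_gen}: the symmetric $\alpha I$ piece will come from the GLE-type dissipation, while the antisymmetric $\beta n_j J$ piece will come from the Schr\"odinger-type rotation, with the quantized factor $n_j$ arising from the winding of the vortex profile.

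First I would set up the near-core modulation ansatz. Outside shrinking disks $B(\bx_j^\vep(t),\sigma)$ one writes $\psi^\vep\approx \rho_\vep e^{i\Phi_\vep}$ with $\rho_\vep\to 1$ and $\Phi_\vep$ close to the canonical harmonic map with singularities of degree $n_j$ at $\bx_j^\vep(t)$; inside each disk one freezes the profile to the $n_j$-degree Ginzburg--Landau vortex $f_{n_j}(|\bx-\bx_j^\vep|/\vep)e^{in_j\theta_j}$. The rescaling $\lambda_\vep\sim 1/\ln(1/\vep)$ is chosen so that the GLE dissipation and the NLSE rotation both act at the same slow $O(1)$ time scale, producing a non-trivial velocity $d\bx_j/dt$ in the limit. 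The renormalized energy expansion
\[
\mathcal{E}^\vep(t)=\pi\sum_{j=1}^{M} n_j^2\,\ln(1/\vep)+W(X^\vep(t))+o(1),\qquad \vep\to 0,
\]
of Bethuel--Brezis--H\'elein type, adapted to the domain $\mathcal D$ and to the prescribed boundary data, identifies $W$ as the Kirchhoff--Onsager-type function appearing in \eqref{eqn: rdl_gen}.

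Next I would derive the ODE by testing the CGLE against variations of the vortex locations on an annular region surrounding each $\bx_j^\vep(t)$. Two identities do the work. The energy dissipation identity, obtained by multiplying \eqref{cgle} by $\overline{\partial_t\psi^\vep}$ and taking the real part, gives $\frac{d}{dt}\mathcal{E}^\vep=-\lambda_\vep\int_{\mathcal D}|\partial_t\psi^\vep|^2$; inserting the expansion of $\mathcal{E}^\vep$ and evaluating the leading term via the divergence of the stress--energy tensor on the annulus yields the $\alpha I\,d\bx_j/dt=-\nabla_{\bx_j}W(X)$ component. The Jacobian/vorticity identity, obtained by taking the imaginary part, gives the evolution of $J(\psi^\vep):=\det(\nabla\psi^\vep,\nabla\bar\psi^\vep)/(2i)$, which concentrates as $\pi\sum n_j\delta_{\bx_j(t)}$; evaluating this limit produces the $\beta n_j J\,d\bx_j/dt$ term. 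Summing both contributions yields \eqref{eqn: rdl_gen}.

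The main obstacle is making this matching rigorous on the full $O(1)$ time interval $[0,T)$, i.e. controlling the formally $o(1)$ remainders uniformly in $t$ up to the first collision or boundary exit. This requires quantitative vortex-ball lower bounds \`a la Jerrard/Sandier to trap essentially all of the $\pi n_j^2|\ln\vep|$ energy inside disks of vanishing radius around each $\bx_j^\vep(t)$, together with a Gronwall-type argument on the \emph{modulated energy excess} $\mathcal{E}^\vep(t)-\pi\sum n_j^2|\ln\vep|-W(X^\vep(t))$, which is shown to be driven only by the ODE residual. Once this excess is controlled, compactness of $\{\bx_j^\vep(\cdot)\}$ in $C^0_{\mathrm{loc}}([0,T))$ follows, and passing to the limit in the energy and Jacobian identities delivers \eqref{eqn: rdl_gen}; the cases of Dirichlet \eqref{dir} and homogeneous Neumann \eqref{neu-bc} boundary conditions differ only through the choice of $W$ (boundary phase $\omega$ versus its free-boundary analogue), exactly as in \cite{KMMS, FHL3}.
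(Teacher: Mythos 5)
You should first be aware that the paper does not prove Theorem~\ref{thm: rdl_gen} at all: it is stated as an imported result, with the attribution ``then we have \cite{KMMS, CJ, FHL3}'' immediately preceding it, and the only proofs the paper supplies are for the downstream equivalence lemmas (Lemma~\ref{lem: equi_dir_rdl1} and its Neumann analogue). So there is no in-paper proof to compare against; the relevant comparison is with the strategy of the cited references, and on that score your outline is faithful. The decomposition of \eqref{cgle} under the scaling \eqref{lam_vep} into a dissipative part of size $\alpha/\ln(1/\vep)$ and a Hamiltonian part of size $\beta$, the Bethuel--Brezis--H\'elein renormalized-energy expansion identifying $W$, the energy-dissipation identity as the source of the $\alpha I$ term and the Jacobian/vorticity evolution as the source of the $\beta n_j J$ term, and the modulated-energy-excess plus Gronwall mechanism for uniformity in time are precisely the ingredients of the mixed-flow analysis in \cite{KMMS} (and of \cite{CJ, JS} in the Hamiltonian limit).

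That said, what you have written is a roadmap, not a proof, and every genuinely difficult step is named rather than carried out. Concretely: (i) the uniform-in-time vortex-ball lower bounds and the compactness of the trajectories $\bx_j^\vep(\cdot)$ are asserted, but these are the technical core of \cite{KMMS}; (ii) the claim that one can derive the $\alpha I$ block from the energy identity and the $\beta n_j J$ block from the Jacobian identity separately and then ``sum both contributions'' is not how the argument closes --- the two identities must be combined into a single localized momentum/stress-tensor balance on annuli around each core, since neither identity alone determines $d\bx_j/dt$ when both $\alpha$ and $\beta$ are nonzero; (iii) the control of the excess energy by ``the ODE residual'' presupposes a well-posedness and stability statement for the limit system \eqref{eqn: rdl_gen} on $[0,T)$ that you have not formulated; and (iv) the boundary contributions to $W$ (the terms $W_{\rm dbc}$ and $W_{\rm nbc}$ of Section~\ref{sec: rdl_dir} and Section~\ref{sec: rdl_neu}) require the harmonic correctors $R$, $\tilde R$ to enter the stress-tensor computation, which is more than ``the choice of $W$ differing.'' As a description of where \eqref{eqn: rdl_gen} comes from and why the matrix $\alpha I+\beta n_j J$ appears, your proposal is correct and would serve well as an expository remark; as a proof it has the same status as the paper's own treatment, namely a pointer to \cite{KMMS, CJ, FHL3}.
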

\noindent In equation (\ref{eqn: rdl_gen}), $T$ is the first time that either two vortex collide or any vortex exit the domain,
$n_j=+1$ or $-1$ is the winding number of the vortex,
 $X:=X(t)=(\bx_1(t), \bx_2(t),\ldots,\bx_M(t)),$
\[I=\begin{pmatrix}
   1 & 0 \\
   0 & 1 \\
 \end{pmatrix},\qquad J=\begin{pmatrix}
                        0 & -1 \\
                        1 & 0 \\
                      \end{pmatrix},\]
are the $2\times2$ identity and symplectic matrix, respectively. Moreover, the function $W(X)$ is the so called
renormalized energy defined as:
\begin{equation}
\label{re_ener}
W(X)=: W_{\rm cen}(X) +W_{\rm bc}(X),
\end{equation}
where $W_{\rm cen}$ is the renormalized energy associated to the $M$ vortex centers that defined as
\begin{equation}
\label{re_ener_cen}
W_{\rm cen}(X) = -\sum_{1\leq i\neq j\leq N}n_{i}n_{j}\ln|\bx_{i}-\bx_{j}|,
\end{equation}
and $W_{\rm bc}(X)$ is the renormalized energy involving the effect of the BC
(\ref{dir}) and/or (\ref{neu-bc}), which takes different formations in different cases.

\subsection{Under Dirichlet boundary condition}\label{sec: rdl_dir}

For the CGLE (\ref{cgle}) with initial condition (\ref{ini_con}) under Dirichlet BC (\ref{dir}),
it has been derived formally and rigorously \cite{FHL1, KMMS, LX1, CJ, BBH, SE} that
$W_{\rm bc}(X)=W_{\rm dbc}(X)$ in the renormalized energy $(\ref{re_ener})$ admits the form:
\begin{equation}
\label{re_ener_bc_dir}
W_{\rm dbc}(X) =: -\sum_{j=1}^{M}n_{j}R(\bx_{j};X)
+\int_{\partial\mathcal{D}}\left[R(\bx;X)+
\sum_{j=1}^{M}n_{j}\ln|\bx-\bx_{j}|\right]
\frac{\partial_{\nu_\perp} \omega(\bx)}{2\pi} \;ds,
\end{equation}
where, for any fixed $X\in \mathcal{D}^M$, $R(\bx;X)$ is a harmonic
function in $\bx$, i.e.,
\begin{equation}\label{harmR}
  \Delta R(\bx;X) = 0,\qquad \bx\in \mathcal{D},
\end{equation}
satisfying the following Neumann BC
\begin{equation}\label{harmRC}
\frac{\partial R(\bx;X)}{\partial \nu} =
\partial_{\nu_\perp} \omega(\bx) -\frac{\partial}{\partial
  \nu}\sum_{l=1}^{M}n_{l}\ln|\bx-\bx_{l}|, \qquad
  \bx\in \partial\mathcal{D}.
\end{equation}
Notice that to calculate $\nabla_{\bx_j} W(X)$, we need to calculate $\nabla_{\bx_j} R$, and
since for $j=1,\cdots,N$,  $\bx_j$  is implicitly included in $R(\bx,X)$ as a parameter,
hence it is difficult to calculate $\nabla_{\bx_j} R$ and thus difficult to solve
the RDL (\ref{eqn: rdl_gen}) with (\ref{re_ener})--(\ref{re_ener_bc_dir})
even numerically. However, by using an identity in \cite{BBH} (see Eq. (51) on page 84),
\[
\nabla_{\bx_j}\left[W(X)+ W_{\rm
dbc}(X)\right]=-2n_{j}\nabla_{\bx}\left[R(\bx;X)+\sum_{l=1\& l\ne
   j}^Mn_{l}\ln|\bx-\bx_{l}|\right]_{\bx=\bx_{j}}, \]
we have the following simplified equivalent form for (\ref{eqn: rdl_gen}).
\begin{lem}
 \label{lem: equi_dir_rdl1}
  For $1\le j\le M$ and $t>0$, system (\ref{eqn: rdl_gen}) can be simplified as
  \begin{equation}
\label{eqn: reduced1}
 (\alpha I+\beta m_{j} J) \frac{d}{dt}\bx_{j}(t)=2n_j\left[\nabla_\bx
   R\left(\bx;X\right)|_{\bx=\bx_{j}(t)}+\sum_{l=1\& l\ne
   j}^Mn_{l}\frac{\bx_{j}(t)-\bx_{l}(t)}{|\bx_{j}(t)
   -\bx_{l}(t)|^2}\right].
\end{equation}
\end{lem}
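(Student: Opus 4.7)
The plan is to argue that Lemma \ref{lem: equi_dir_rdl1} is essentially a direct substitution: I would take the right-hand side $-\nabla_{\bx_j}W(X)$ of (\ref{eqn: rdl_gen}), expand $W(X)=W_{\rm cen}(X)+W_{\rm dbc}(X)$ using the definitions (\ref{re_ener_cen}) and (\ref{re_ener_bc_dir}), and then invoke the identity of Bethuel--Brezis--Hélein quoted immediately before the lemma to collapse all the implicit $\bx_j$-dependence of $R(\bx;X)$ into a single evaluation gradient.

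Concretely, first I would rewrite the identity (as quoted from BBH, Eq.~(51), p.~84) in the equivalent form
\[
\nabla_{\bx_j}W(X)=-2n_j\,\nabla_{\bx}\!\left[R(\bx;X)+\sum_{l=1,\,l\ne j}^{M}n_l\ln|\bx-\bx_l|\right]_{\bx=\bx_j},
\]
which absorbs the contribution of both $W_{\rm cen}$ and the boundary term $W_{\rm dbc}$ into one compact expression. Then I would carry out the elementary calculus step
\[
\nabla_{\bx}\ln|\bx-\bx_l|=\frac{\bx-\bx_l}{|\bx-\bx_l|^2},
\]
and evaluate at $\bx=\bx_j$ to produce the sum $\sum_{l\ne j}n_l\frac{\bx_j-\bx_l}{|\bx_j-\bx_l|^2}$. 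Substituting back into (\ref{eqn: rdl_gen}), the factor $-(-2n_j)=2n_j$ appears on the right-hand side, multiplying exactly the bracket on the RHS of (\ref{eqn: reduced1}). The matrix $(\alpha I+\beta n_j J)$ on the left is untouched, and the initial condition is identical, so (\ref{eqn: reduced1}) follows.

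The only genuinely nontrivial step is the BBH identity itself; the rest is bookkeeping. I would expect the main obstacle—already handled by the citation to \cite{BBH}—to be the fact that $R(\bx;X)$ depends on $\bx_j$ in two coupled ways: through the parameter $\bx_j$ in the Neumann boundary condition (\ref{harmRC}) that \emph{defines} $R$, and through any subsequent evaluation at points that involve $\bx_j$. Differentiating $W_{\rm dbc}(X)$ naively would require differentiating the solution of a Neumann problem with respect to the data, which is delicate. The content of the BBH identity is precisely that this sensitivity term cancels against the partial derivative of the boundary integral in (\ref{re_ener_bc_dir}), leaving only the explicit spatial gradient evaluated at $\bx=\bx_j$. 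Once that identity is accepted, the lemma reduces to a one-line substitution, and no further estimates, regularity considerations, or limits in $\vep$ are needed—Theorem \ref{thm: rdl_gen} has already done that work.
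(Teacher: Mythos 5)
Your proposal is correct and follows essentially the same route as the paper: the paper's own justification of Lemma~\ref{lem: equi_dir_rdl1} consists precisely of quoting the Bethuel--Brezis--H\'elein identity (Eq.~(51), p.~84 of \cite{BBH}) and substituting it, together with $\nabla_\bx\ln|\bx-\bx_l|=\frac{\bx-\bx_l}{|\bx-\bx_l|^2}$, into (\ref{eqn: rdl_gen}). Your reading of the slightly garbled left-hand side $\nabla_{\bx_j}[W(X)+W_{\rm dbc}(X)]$ as $\nabla_{\bx_j}W(X)$, and your remark that the only real content lies in the cited identity (the cancellation of the implicit $\bx_j$-dependence of $R$ against the boundary integral), accurately reflect what the paper does.
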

\noindent Moreover,  for any fixed $X\in \mathcal{D}^M$, by introducing function $H(\bx,X)$
and  $Q(\bx,X)$  that both are harmonic in $\bx$ satisfying respectively
the boundary condition \cite{RLJHMH, LX}:
\begin{align}
&\label{harmRC_tan}
\frac{\partial H(\bx;X)}{\partial \nu_\perp} =
\partial_{\nu_\perp} \omega(\bx) -\frac{\partial}{\partial
  \nu}\sum_{l=1}^{M}n_{l}\ln|\bx-\bx_{l}|, \qquad
  \bx\in \partial\mathcal{D},\\
 &\label{harmRC_dir}
 Q(\bx;X)= \omega(\bx)-\sum_{l=1}^{M}n_{l}\theta(\bx-\bx_{l}), \qquad
  \bx\in \partial\mathcal{D},
\end{align}
with the function $\theta: \ {\mathbb R}^2 \to [0,2\pi)$  defined as
\begin{equation}\label{theta}
\cos(\theta(\bx))=\frac{x}{|\bx|},
 \qquad \sin(\theta(\bx))=\frac{y}{|\bx|}, \qquad
0\ne \bx=(x,y)\in {\mathbb R}^2,
\end{equation}
we have the following lemma for the equivalence of the RDL (\ref{eqn: reduced1})
\cite{BT, BT1}:
\begin{lem}
\label{lem: equi_dir_rdl2}.
For any fixed $X\in \mathcal{D}^M$, we have the following identity
\begin{equation}\label{equi_dir}
J\nabla_\bx Q\left(\bx;X\right)=\nabla_\bx
R\left(\bx;X\right)=J\nabla_\bx H\left(\bx;X\right), \qquad \bx\in
\mathcal{D},
\end{equation}
which immediately implies the equivalence between system  (\ref{eqn: reduced1}) and the following two systems:
for $t>0$
\begin{align*}
&
 (\alpha I+\beta n_{j} J) \frac{d}{dt}\bx_{j}(t)=2n_j\left[J\nabla_\bx H\left(\bx;X\right)|_{\bx=
\bx_{j}(t)}+\sum_{l=1\& l\ne  j}^Mn_{l}\frac{\bx_{j}(t)-\bx_{l}(t)}{|\bx_{j}(t) -\bx_{l}(t)|^2}\right], \\
&
  (\alpha I+\beta n_{j} J) \frac{d}{dt}\bx_{j}(t)=2n_j\left[J\nabla_\bx Q\left(\bx;X\right)|_{\bx=
\bx_{j}(t)}+\sum_{l=1\& l\ne  j}^Mn_{l}\frac{\bx_{j}(t)-\bx_{l}(t)}{|\bx_{j}(t) -\bx_{l}(t)|^2}\right].
\end{align*}
\end{lem}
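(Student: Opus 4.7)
The plan is to use two-dimensional harmonic-conjugate theory on the simply connected domain $\mathcal{D}$ to reduce both identities to matching of boundary traces. Since $H$ is harmonic on $\mathcal{D}$, the rotated gradient $J\nabla H$ is curl-free (its curl equals $\Delta H=0$), so on the simply connected domain $\mathcal{D}$ there exists a harmonic function $\tilde R$, unique up to an additive constant, satisfying $\nabla\tilde R=J\nabla H$ in $\mathcal{D}$ (equivalently, $H+i\tilde R$ is holomorphic in $z=x+iy$). The core step is then to identify $R$ with $\tilde R$ up to a constant by comparing Neumann traces on $\partial\mathcal{D}$.

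To carry out that comparison, I would first record the pointwise identity $\nu\cdot(J\nabla H)=\pm\,\partial_{\nu_\perp}H$ on $\partial\mathcal{D}$, which follows from $\nu_\perp=J\nu$ and $J^{T}=-J$ (the sign being forced by the paper's conventions). This gives an explicit formula for $\partial_\nu\tilde R$ in terms of $\partial_{\nu_\perp}H$. Substituting the Neumann BC (\ref{harmRC_tan}) of $H$ and comparing with the Neumann BC (\ref{harmRC}) of $R$, the boundary data of $R\mp\tilde R$ vanishes, so this difference is a harmonic function on $\mathcal{D}$ with zero Neumann trace and therefore must be constant. Taking gradients yields the first identity $\nabla R=J\nabla H$.

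For the second identity $J\nabla Q=\nabla R$, the key observation is the pointwise harmonic-conjugate relation $\nabla\theta(\bx-\bx_l)=J\nabla\ln|\bx-\bx_l|$, which follows by direct computation in polar coordinates centered at $\bx_l$ (equivalently, $\log(z-z_l)$ is holomorphic away from $z_l$). Differentiating the Dirichlet identity (\ref{harmRC_dir}) tangentially and substituting this relation together with the elementary matrix identity $\nu_\perp^{T}J\nabla f=\nu\cdot\nabla f=\partial_\nu f$, one obtains $\partial_{\nu_\perp}Q=\partial_{\nu_\perp}\omega-\sum_{l}n_l\,\partial_\nu\ln|\bx-\bx_l|$ on $\partial\mathcal{D}$, which is exactly the Neumann data prescribed for $H$ in (\ref{harmRC_tan}). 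Thus $Q-H$ has vanishing tangential derivative along the connected curve $\partial\mathcal{D}$ and so is constant there; the maximum principle for the harmonic function $Q-H$ extends this constancy to all of $\mathcal{D}$, giving $\nabla Q=\nabla H$ and hence $J\nabla Q=J\nabla H=\nabla R$.

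The main technical obstacle is the multi-valuedness of $\theta(\bx-\bx_l)$: the Dirichlet identity (\ref{harmRC_dir}) defining $Q$ is only meaningful modulo $2\pi$, and $Q$ itself is a priori a multi-valued function. One must therefore interpret $Q$ either as multi-valued with a single-valued gradient, or restrict to a simply connected subdomain obtained by cutting $\mathcal{D}$ along curves joining each vortex to the boundary; the compatibility degree condition $\deg g=\sum_{l}n_{l}$ ensures that the monodromies of $\omega$ and $\sum_{l}n_l\theta(\bx-\bx_l)$ around $\partial\mathcal{D}$ cancel, so that $\nabla Q$ is a well-defined single-valued vector field on $\mathcal{D}$. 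Once this is properly set up, the tangential-derivative comparison in the previous step goes through and the lemma follows.
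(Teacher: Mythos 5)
Your proposal is correct and follows essentially the same route as the paper: introduce a harmonic conjugate (stream function) for the rotated gradient on the simply connected domain, use the conjugate relation $\nabla\theta(\bx-\bx_l)=J\nabla\ln|\bx-\bx_l|$ to convert the tangential boundary data of $Q$ and $H$ into the Neumann data of $R$, and conclude from uniqueness (up to additive constants) of the harmonic Neumann problem. The only differences are organizational --- you establish $Q-H=\mathrm{const}$ via the maximum principle and then chain through $\nabla R=J\nabla H$, whereas the paper matches the conjugate of $Q$ directly against $R$ --- and your explicit handling of the sign convention for $\nu_\perp$ and of the multivaluedness of $\theta$ addresses points the paper silently glosses over.
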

\noindent {\bf Proof.}
For any fixed $X\in \mathcal{D}^M$, since $Q$ is a harmonic function, there
exists a function $\varphi_1(\bx)$ such that
$$J\nabla_\bx Q\left(\bx;X\right)=\nabla\varphi_1(\bx),\qquad \bx\in \mathcal{D}.$$
Thus, $\varphi_1(\bx)$ satisfies the Laplace equation
\begin{equation}\label{proof_dir1}
    \Delta\varphi_1(\bx)=\nabla\cdot(J\nabla_\bx
    Q(\bx;X))=\partial_{yx}\varphi_1(\bx)-\partial_{xy}\varphi_1(\bx)=0, \quad \bx\in \mathcal{D},
\end{equation}
with the following Neumann BC
\begin{equation}\label{proof_dir2}
   \partial_{\nu}\varphi_1(\bx)=(J\nabla_\bx Q(\bx;X))\cdot\nu=
   \nabla_\bx
   Q(\bx;X)\cdot\nu_\perp=\partial_{\nu_\perp}Q(\bx;X),
   \quad\bx\in\partial\mathcal{D}.
\end{equation}
Noticing (\ref{harmRC_dir}), we obtain for $\bx\in\partial\mathcal{D}$,
\begin{equation}\label{proof_dir3}
   \partial_{\nu}\varphi_1(\bx)=
   \partial_{\nu_\perp} \omega(\bx) -\frac{\partial}{\partial
  \nu_\perp}\sum_{l=1}^{M}n_{l}\theta(\bx-\bx_{l})=
  \partial_{\nu_\perp} \omega(\bx) -\frac{\partial}{\partial
  \nu}\sum_{l=1}^{M}n_{l}\ln|\bx-\bx_{l}|.
\end{equation}
Combining (\ref{proof_dir1}), (\ref{proof_dir3}),
(\ref{harmR}) and (\ref{harmRC}), we get
\begin{equation}\label{proof_dir4}
\Delta(R(\bx;X)-\varphi_1(\bx))=0, \quad \bx\in\mathcal{D}, \qquad
\partial_{\nu}\left(R(\bx;X)-\varphi_1(\bx)\right)=0,\quad \bx\in\partial\mathcal{D}.
\end{equation}
Thus
$$R(\bx;X)=\varphi_1(\bx)+{\rm constant}, \qquad \bx\in\mathcal{D},$$
which immediately implies the first equality in (\ref{equi_dir}).

Similarly, since $H$ is a harmonic function, there exists a function
$\varphi_2(\bx)$ such that
$$J\nabla_\bx H\left(\bx;X\right)=\nabla\varphi_2(\bx),\qquad \bx\in \mathcal{D}.$$
Thus, $\varphi_2(\bx)$ satisfies the Laplace equation
\begin{equation}\label{proof_dir5}
    \Delta\varphi_2(\bx)=\nabla\cdot(J\nabla_\bx
    H(\bx;X))=\partial_{yx}\varphi_2(\bx)-\partial_{xy}\varphi_2(\bx)=0, \qquad \bx\in \mathcal{D},
\end{equation}
with the following Neumann BC
\begin{equation}\label{proof_dir6}
   \partial_{\nu}\varphi_2(\bx)=(J\nabla_\bx H(\bx;X))\cdot\nu=
   \nabla_\bx
   H(\bx;X)\cdot\nu_\perp=\partial_{\nu_\perp}H(\bx;X),
   \qquad\bx\in\partial\mathcal{D}.
\end{equation}
Combining (\ref{proof_dir5}), (\ref{proof_dir6}), (\ref{harmR}),
(\ref{harmRC}) and (\ref{harmRC_tan}), we get
\begin{equation}\label{proof_dir7}
\Delta(R(\bx;X)-\varphi_2(\bx))=0, \quad \bx\in\mathcal{D}, \qquad
\partial_{\nu}\left(R(\bx;X)-\varphi_2(\bx)\right)=0,\quad \bx\in\partial\mathcal{D}.
\end{equation}
Thus
$$R(\bx;X)=\varphi_2(\bx)+{\rm constant}, \qquad \bx\in\mathcal{D},$$
which immediately implies the second equality in (\ref{equi_dir}).
\hfill $\square$

\subsection{Under homogeneous Neumann boundary condition}\label{sec: rdl_neu}

For the CGLE (\ref{cgle}) with initial condition (\ref{ini_con}) under homogeneous Neumann
BC (\ref{neu-bc}), it has been derived formally and rigorously \cite{JS,KMMS,CJ} that
$W_{\rm bc}(X)$ in the renormalized energy $(\ref{re_ener})$ admit the form:
\begin{equation}\label{renorm_ener_neu}
W_{\rm bc}(X)=W_{\rm nbc}(X):= -\sum_{j=1}^{M}n_{j}\tilde{R}(\bx_{j};X),
\end{equation}
  and by using the following identity
\begin{equation}
\nabla_{\bx_j}\left[W(X)+ W_{\rm
nbc}(X)\right]=-2n_{j}\nabla_{\bx}\left[\tilde{R}(\bx;X)+\sum_{l=1\&
l\ne j}^Mn_{l}\ln|\bx-\bx_{l}|\right]_{\bx_{j}},
\end{equation}
we have the following simplified equivalent form for (\ref{eqn: rdl_gen}):
\begin{lem}
For $1\le j\le M$ and $t>0$, system (\ref{eqn: rdl_gen}) can be simplified as
\begin{equation}\label{eqn: reduced-neu1}
(\alpha I+\beta n_{j} J) \frac{d}{dt}\bx_{j}(t)=2n_j\left[\nabla_\bx \tilde{R}\left(\bx;X\right)|_{\bx
=\bx_{j}(t)}+\sum_{l=1\& l\ne j}^Mn_{l}\frac{\bx_{j}(t)-\bx_{l}(t)}{|\bx_{j}(t) -\bx_{l}(t)|^2}\right].
\end{equation}
\end{lem}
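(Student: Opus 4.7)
The plan is to mirror the Dirichlet derivation of Lemma \ref{lem: equi_dir_rdl1}: substitute the Neumann-case expression (\ref{renorm_ener_neu}) for $W_{\rm bc}(X)$ into the general reduced dynamical law (\ref{eqn: rdl_gen}), then invoke the simplifying identity displayed immediately before the lemma statement.

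First, I would rewrite (\ref{eqn: rdl_gen}) as $(\alpha I+\beta n_{j} J)\,\frac{d\bx_{j}}{dt} = -\nabla_{\bx_{j}} W(X)$ and decompose $W(X) = W_{\rm cen}(X) + W_{\rm nbc}(X)$ using (\ref{re_ener})--(\ref{re_ener_cen}) and (\ref{renorm_ener_neu}). Differentiating $W_{\rm cen}$ is routine: for each $l \ne j$ the ordered pairs $(j,l)$ and $(l,j)$ each contribute $n_{j} n_{l} \ln|\bx_{j} - \bx_{l}|$, so that $\nabla_{\bx_{j}} W_{\rm cen}(X) = -2 n_{j} \sum_{l \ne j} n_{l} \frac{\bx_{j}-\bx_{l}}{|\bx_{j}-\bx_{l}|^{2}}$, which already produces the Kirchhoff-type sum on the right-hand side of (\ref{eqn: reduced-neu1}).

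The delicate term is $\nabla_{\bx_{j}} W_{\rm nbc}(X) = -\nabla_{\bx_{j}} \sum_{k=1}^{M} n_{k} \tilde{R}(\bx_{k};X)$, in which $\bx_{j}$ enters both as an evaluation point (when $k=j$) and as a parameter inside $X$ in every $\tilde{R}(\cdot\,;X)$, through the dependence of the harmonic extension on the vortex locations. Rather than splitting these two contributions and grinding out a Green's-identity computation on $\tilde{R}$ directly, I would simply invoke the identity displayed above the lemma, which bundles both pieces together (with a partial cancellation against the singular part of the Kirchhoff cross terms from $W_{\rm cen}$) and collapses the whole gradient of $W$ into $-2n_{j}\bigl[\nabla_{\bx} \tilde{R}(\bx;X)|_{\bx=\bx_{j}} + \sum_{l\ne j} n_{l} \frac{\bx_{j}-\bx_{l}}{|\bx_{j}-\bx_{l}|^{2}}\bigr]$. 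Negating and inserting this into (\ref{eqn: rdl_gen}) yields exactly (\ref{eqn: reduced-neu1}).

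The main obstacle is therefore precisely the parametric dependence of $\tilde{R}$ on $X$; if one wanted to handle it by hand, the argument would require integration by parts exploiting the harmonicity of $\tilde{R}$ and its Neumann boundary condition (the analogue of (\ref{harmRC}) for the Neumann case), much in the spirit of the harmonic-conjugate manipulation carried out in Lemma \ref{lem: equi_dir_rdl2}. Once the displayed identity is accepted as cited, however, the proof of the present lemma reduces to a one-line substitution structurally identical to that of Lemma \ref{lem: equi_dir_rdl1}, and the remaining work is purely notational.
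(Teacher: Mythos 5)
Your proposal is correct and follows essentially the same route as the paper: the paper offers no separate proof of this lemma, presenting it as an immediate consequence of substituting the cited gradient identity for $W$ into (\ref{eqn: rdl_gen}), exactly as you do. Your additional remarks on the routine differentiation of $W_{\rm cen}$ and on the parametric dependence of $\tilde{R}$ on $X$ being the real content of the cited identity are accurate but not part of the paper's (omitted) argument.
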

\noindent Moreover,  for any fixed $X\in \mathcal{D}^M$, by introducing function $\tilde{H}(\bx,X)$
and  $\tilde{Q}(\bx,X)$  that both are harmonic in $\bx$ satisfying respectively
the boundary condition \cite{SJYM1, SJYM2, SJYM3, LX}:
\begin{align}
&\label{harmRC_neu_tan}
\frac{\partial \tilde{H}(\bx;X)}{\partial \nu_\perp} =
-\frac{\partial}{\partial
  \nu}\sum_{l=1}^{M}n_{l}\theta(\bx-\bx_{l}), \qquad
  \bx\in \partial\mathcal{D},\\
 &\label{harmRC_neu}
\frac{\partial \tilde{Q}(\bx;X)}{\partial \nu}
=-\frac{\partial}{\partial
  \nu}\sum_{l=1}^{M}n_{l}\theta(\bx-\bx_{l}), \qquad
  \bx\in \partial\mathcal{D},
\end{align}
with the function $\theta: \ {\mathbb R}^2 \to [0,2\pi)$  being defined in (\ref{theta}),
we have the following lemma for the equivalence of the RDL (\ref{eqn: reduced-neu1})
\cite{BT, BT1}:
\begin{lem}
For any fixed $X\in \mathcal{D}^M$, we have the following identity
\begin{equation}\label{equi_neu}
J\nabla_\bx \tilde{Q}\left(\bx;X\right)=\nabla_\bx
\tilde{R}\left(\bx;X\right)=J\nabla_\bx \tilde{H}\left(\bx;X\right), \qquad \bx\in
\mathcal{D},
\end{equation}
which immediately implies the equivalence of  system  (\ref{eqn: reduced-neu1}) and the following two systems:
for $t>0$
\begin{align*}
&
 (\alpha I+\beta n_{j} J) \frac{d}{dt}\bx_{j}(t)=2n_j\left[\nabla_\bx \tilde{H}\left(\bx;X\right)|_{\bx=
\bx_{j}(t)}+\sum_{l=1\& l\ne  j}^Mn_{l}\frac{\bx_{j}(t)-\bx_{l}(t)}{|\bx_{j}(t) -\bx_{l}(t)|^2}\right], \\
&
  (\alpha I+\beta n_{j} J) \frac{d}{dt}\bx_{j}(t)=2n_j\left[J\nabla_\bx \tilde{Q}\left(\bx;X\right)|_{\bx=
\bx_{j}(t)}+\sum_{l=1\& l\ne  j}^Mn_{l}\frac{\bx_{j}(t)-\bx_{l}(t)}{|\bx_{j}(t) -\bx_{l}(t)|^2}\right].
\end{align*}
\end{lem}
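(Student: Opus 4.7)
The plan is to mimic, step by step, the Dirichlet argument of Lemma~2.2: construct harmonic conjugates of $\tilde Q$ and $\tilde H$, match their boundary data to those characterizing $\tilde R$, and invoke uniqueness of the interior Neumann problem to identify them with $\tilde R$ up to an additive constant.

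First I would introduce $\tilde\varphi_1,\tilde\varphi_2$ on $\mathcal D$ satisfying $\nabla\tilde\varphi_1 = J\nabla_\bx\tilde Q$ and $\nabla\tilde\varphi_2 = J\nabla_\bx\tilde H$. These are well defined (up to additive constants) because $\tilde Q$ and $\tilde H$ are harmonic, so the vector fields $J\nabla\tilde Q$ and $J\nabla\tilde H$ are curl-free; a direct computation mimicking (\ref{proof_dir1}) and (\ref{proof_dir5}) then shows that $\tilde\varphi_1$ and $\tilde\varphi_2$ are themselves harmonic on $\mathcal D$.

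Next I would compute their boundary traces using the elementary pair of identities $(J\nabla u)\cdot\nu = \nabla u\cdot\nu_\perp$ and $(J\nabla u)\cdot\nu_\perp = -\nabla u\cdot\nu$, valid for any smooth $u$. For $\tilde\varphi_2$, this yields $\partial_\nu\tilde\varphi_2 = \partial_{\nu_\perp}\tilde H$, which by (\ref{harmRC_neu_tan}) equals $-\partial_\nu\sum_l n_l\theta(\bx-\bx_l)$. For $\tilde\varphi_1$, the Neumann datum (\ref{harmRC_neu}) on $\tilde Q$ translates instead into the tangential identity $\partial_{\nu_\perp}\tilde\varphi_1 = \partial_\nu\tilde Q$. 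The final input is the Cauchy--Riemann pairing $\nabla\theta(\bx-\bx_l) = J\nabla\ln|\bx-\bx_l|$ off the singular point, which yields the boundary identities $\partial_\nu\theta(\bx-\bx_l) = -\partial_{\nu_\perp}\ln|\bx-\bx_l|$ and $\partial_{\nu_\perp}\theta(\bx-\bx_l)=\partial_\nu\ln|\bx-\bx_l|$ needed to convert the data involving $\theta$ into the data involving $\ln|\bx-\bx_l|$ that characterizes the Neumann problem for $\tilde R$.

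The conclusion is then an elliptic uniqueness step: in each case the difference $\tilde R-\tilde\varphi_j$ is harmonic on $\mathcal D$ with vanishing normal derivative on $\partial\mathcal D$ (the analogue of (\ref{proof_dir4}) and (\ref{proof_dir7})), hence is a constant; taking gradients yields (\ref{equi_neu}), and the equivalence of the three ODE systems follows by direct substitution into (\ref{eqn: reduced-neu1}). The main obstacle I anticipate is bookkeeping rather than anything substantive: the datum for $\tilde Q$ is a true normal derivative rather than a Dirichlet trace, so one of the Cauchy--Riemann identities produces only a tangential relation for $\tilde\varphi_1$, and extracting from it the matching normal-derivative condition requires integrating along $\partial\mathcal D$ and verifying the zero-total-flux compatibility that makes the interior Neumann problem well posed. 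The signs introduced by $J$, $\nu_\perp$, and the conjugate pair $(\theta,\ln|\bx|)$ must all be tracked consistently, exactly as in the Dirichlet proof above.
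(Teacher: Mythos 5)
Your proposal is correct and takes essentially the same approach as the paper: the paper's own ``proof'' of this lemma is the single sentence that one should follow the line of the Dirichlet-case proof, and your argument carries out exactly that program (harmonic conjugates $\tilde\varphi_1,\tilde\varphi_2$, conversion of the boundary data via the Cauchy--Riemann pair $(\theta,\ln|\cdot|)$ and the identities $(J\nabla u)\cdot\nu=\partial_{\nu_\perp}u$, then uniqueness of the interior Neumann problem up to a constant). Your closing caveat about the datum for $\tilde Q$ yielding only a tangential relation for $\tilde\varphi_1$ is a legitimate bookkeeping point that the paper glosses over entirely, but it does not change the route.
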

\begin{proof}
Follow the line in the proof of lemma \ref{lem: equi_dir_rdl1} and we omit the details here for brevity.
\end{proof}

\section{Numerical methods}
\label{sec: num_method}

In this section, we will give a brief outline for discussing by some efficient and accurate
numerical methods how to solve the
CGLE (\ref{cgle}) on either a rectangle or a disk
with initial condition (\ref{ini_con}) and under either Dirichlet BC (\ref{dir})
or homogeneous Neumann BC (\ref{neu-bc}).
The key idea in our numerical methods are based on: (i) applying a time-splitting
technique which has been widely used for nonlinear partial
differential equations to decouple the nonlinearity in
the CGLE \cite{GRTP, S, BJM, YZWBQD1}; and (ii) adopting proper finite
difference/element and/or spectral method to discretize a gradient
flow with constant coefficients \cite{BDZ, BT, BT1}.

Let $\tau >0$
be the time step size, denote $t_{n}=n\tau$ for $n\ge0$. For
$n=0,1,\ldots$, from time $t=t_{n}$ to $t=t_{n+1}$, the CGLE (\ref{cgle}) is
solved in two splitting steps. One first solves
\begin{equation}\label{step1}
(\lambda_\vep+i\beta)\partial_{t}\psi^{\vep}(\bx,t)
=\frac{1}{\vep^{2}}(1-|\psi^{\vep}|^{2})\psi^{\vep},
\qquad \bx\in \mathcal{D}, \quad t\ge t_n,
\end{equation}
for the time step of length $\tau$, followed by
solving
\begin{equation}\label{step2}
(\lambda_\vep+i\beta)\partial_{t}\psi^{\vep}(\bx,t)= \Delta \psi^{\vep},
\qquad \bx\in\mathcal{D}, \quad t\ge t_n,
\end{equation}
for the same time step. Methods to discretize  equation (\ref{step2})  will be outlined later. For
$t\in[t_n,t_{n+1}]$, we can easily obtain from equation (\ref{step1})
the following ODE for $\rho^{\vep}(\bx,t)=|\psi^{\vep}(\bx,t)|^2$:
\begin{equation}\label{eqn: rho_step1}
\partial_t\rho^{\vep}(\bx,t)=\eta[1-\rho^{\vep}(\bx,t)]\rho^{\vep}(\bx,t),\quad \bx\in\mathcal{D}, \quad t_n\le t\le t_{n+1},
\end{equation}
where $\eta=2\lambda_\vep/\vep^2(\lambda_\vep^2+\beta^2)$.
Solving equation (\ref{eqn: rho_step1}), we have
\begin{equation}\label{slrho2}
\rho^{\vep}(\bx,t)=\frac{\rho^{\vep}(\bx,t_n)}{\rho^{\vep}(\bx,t_n)+(1-\rho^{\vep}(\bx,t_n))\exp[-\eta (t-t_n)]}.
\end{equation}
Plugging (\ref{slrho2}) into (\ref{step1}), we can integrate it exactly to get
\begin{equation}\label{explicit}
\psi^{\vep}(\bx,t)=\psi^{\vep}(\bx,t_{n})
\sqrt{\hat{P}(\bx,t)}\exp\left[-\frac{i\beta}{2\lambda_\vep^2}\ln \hat{P}(\bx,t) \right],
\end{equation}
where
\begin{equation}
\label{p}
\hat{P}(\bx,t)=
\frac{1}{|\psi^{\vep}(\bx,t_n)|^2+(1-|\psi^{\vep}(\bx,t_n)|^2)\exp(-\eta(t-t_n))}.
\end{equation}

We remark here that, in practice, we always use the second-order Strang
 splitting \cite{S}, that is, from time $t=t_n$ to $t=t_{n+1}$: (i)
 evolve (\ref{step1}) for half time step $\tau/2$ with initial data
 given at $t=t_n$; (ii) evolve (\ref{step2}) for one step
 $\tau$ starting with the new data;
 and (iii) evolve (\ref{step1}) for half time step $\tau/2$ again with
 the newer data.

When  $\Omega=[a,b]\times[c,d]$ is a rectangular domain, we denote
$h_{x}$=$\frac{b-a}{N}$ and $h_{y}$=$\frac{d-c}{L}$ with
$N$ and $L$ being two even positive integers as the mesh sizes in $x-$direction and
$y-$direction, respectively.  Similar to the
discretization of the gradient flow with constant coefficient \cite{BT},
when the Dirichlet BC (\ref{dir}) is used for the equation (\ref{step2}),
it can be discretized  by using the 4th-order compact
finite difference discretization for spatial derivatives followed by
a Crank-Nicolson (CNFD) scheme for temporal derivative \cite{BT, BT1};
and when homogeneous Neumann BC (\ref{neu-bc}) is used for the equation (\ref{step2}),
it can be discretized  by
using cosine spectral discretization for spatial derivatives followed by
integrating in time {\sl exactly} \cite{BT, BT1}. The details are omitted here
for brevity. Combining the
CNFD and cosine psedudospectral
discretization for Dirichlet and homogeneous Neumann BC, respectively,
with the second order Strang splitting,
we can obtain time-splitting Crank-Nicolson finite difference
(TSCNFD) and time-splitting cosine psedudospectral
(TSCP) discretizations
for the CGLE (\ref{cgle}) on a rectangle with
Dirichlet BC (\ref{dir}) and homogeneous Neumann BC (\ref{neu-bc}), respectively.
Both TSCNFD and TSCP discretizations
are unconditionally stable, second order in time,
the memory cost is $O(NL)$ and the computational cost per time step
is $O\left(NL\ln(NL)\right)$. In addition, TSCNFD is fourth order in space
and TSCP is spectral order in space.

When  $\Omega=\{\bx \ |\ |\bx|<R\}:=B_R({\bf 0})$ is a disk with $R>0$ a fixed
constant. Similar to the
discretization of the GPE with an angular momentum rotation
\cite{B,BDZ,YZWBQD1} and/or the gradient flow
with constant coefficient \cite{BT}, it is natural to adopt the polar coordinate
$(r,\theta)$ in the numerical discretization by using
the standard Fourier pseduospectral method in $\theta$-direction
\cite{JSTT}, finite element method in $r$-direction, and
Crank-Nicolson method in time \cite{B,BDZ,YZWBQD1}.
Again, the details are omitted here
for brevity.

\section{Numerical results under Dirichlet BC}
\label{sec: CGLE_Dir}

In the section, we report numerical results
for vortex interactions of the CGLE (\ref{cgle}) under
the Dirichlet BC (\ref{dir}) and compare
them with those obtained from the corresponding RDLs.
For simplicity, from now on,
we assume that the parameters $\alpha=1$ in (\ref{lam_vep}) and $\beta=1$ in (\ref{cgle}).

We study how the
dimensionless parameter $\vep$, initial setup, boundary value and geometry
of the domain $\mathcal{D}$ affect the dynamics and interaction of vortices.
For a given bounded domain $\mathcal{D}$,
the CGLE (\ref{cgle}) is unchanged by the re-scaling $\bx\to l\bx$,
 $t\to l^2t$ and $\vep\to l\vep$ with $l$ the diameter of $\mathcal{D}$.
Thus without lose of generality, hereafter, without specification, we always
assume that the diameter of $\mathcal{D}$ is $O(1)$.
The function $g$ in the Dirichlet BC (\ref{dir}) is given as
\begin{equation*}
g(\bx)=e^{i(h(\bx)+\sum_{j=1}^{M}n_j\theta(\bx-\bx^0_j))},\qquad \bx\in\partial\mathcal{D},
\end{equation*}
and
the initial condition $\psi_0^\vep$ in (\ref{ini_con}) is
chosen as
\begin{equation} \label{initdbc0}
\psi_0^\vep(\bx)=\psi_0^\vep(x,y)=e^{ih(\bx)}\prod_{j=1}^{M}
\phi^\vep_{n_j} (\bx-\bx^0_j), \qquad \bx=(x,y)\in \overline{\mathcal{D}},
\end{equation}
where $M>0$ is the total number of vortices in the initial data, the phase shift
$h(\bx)$ is a harmonic function, $\theta(\bx)$ is defined in
(\ref{theta}) and for $j=1,2,\ldots,M$, $n_j=1$
or $-1$, and $\bx^0_j=(x_j^0,y_j^0)\in \mathcal{D}$ are the winding
number and initial location of the $j$-th vortex, respectively.
Moreover, for $j=1,\ldots,M$, the function $\phi^\vep_{n_j}(\bx)$ is chosen as
a single vortex centered at the origin with winding number $n_j=1$ or $-1$
which was computed numerically and depicted in section 4 in \cite{BT, BT1}.
In addition, in the following sections, we mainly consider six different modes of
the phase shift $h(\bx):$
\begin{itemize}
\item Mode 0: $h(\bx)=0,$   \qquad \qquad \qquad \quad\;\; Mode 1: $h(\bx)=x+y, $
\item Mode 2: $h(\bx)=x-y, $ \qquad \qquad \qquad  Mode 3: $h(\bx)=x^2-y^2, $
\item Mode 4: $h(\bx)=x^2-y^2+2xy,$  \qquad\   Mode 5: $h(\bx)=x^2-y^2-2xy.$
\end{itemize}

To simplify our discussion, for $j=1,2,\ldots,M$, hereafter we let
$\bx^{\rm r}_j(t)$ be the $j$-th vortex center in the
reduced dynamics and denote $d_j^\vep(t)=|\bx^\vep_j(t)-\bx^{\rm r}_j(t)|$
as the difference of the vortex centers in the CGLE dynamics and reduced dynamics.
Furthermore, in the presentation of figures, the initial location of
a vortex with winding number $+1$, $-1$ and the location that two vortices merge are marked
as `+', `$\circ$' and `$\diamond$', respectively. Finally, in our computations, if not specified,
we take $\mathcal{D}=[-1,1]^2$, mesh sizes
$h_x=h_y=\frac{\vep}{10}$ and time step $\tau=10^{-6}$. The CGLE
(\ref{cgle}) with (\ref{dir}) and (\ref{ini_con})
is solved by the method TSCNFD presented in section \ref{sec: num_method}.

\subsection{Single vortex}
\label{sec: cgle_NRfSVD}

In this subsection,  we present numerical results of the motion of a single quantized
vortex in the CGLE  dynamics and the corresponding reduced dynamics. We choose the parameters as
$M=1$, $n_1=1$ in (\ref{initdbc0}). To study how the initial phase shift $h(\bx)$,
initial location of the vortex $\bx_0$ and domain geometry affect the motion of the vortex and to
understand the validity of the RDL, we consider the following 16 cases:
\begin{itemize}
\item Case I-III: $\bx_1^0=(0,0)$, $h(\bx)$ is chosen as Mode 1, 2 or 3, and $\mathcal{D}$ is type I;
\item Case IV-VIII: $\bx_1^0=(0.1,0)$, $h(\bx)$ is chosen as Mode 1, 2, 3, 4 or 5, and $\mathcal{D}$ is type I;
\item Case IX-XII: $\bx_1^0=(0.1,0.2)$, $h(\bx)$ is chosen as Mode 2, 3, 4 or 5, and $\mathcal{D}$ is type I;
\item Case XIII-XIV:  $\bx_1^0=(0,0)$, $h(\bx)=x+y$ and $\mathcal{D}$ is chosen as type II or III;
\item Case XV-XVI: $\bx_1^0=(0.1,0.2)$, $h(\bx)=x^2-y^2$ and $\mathcal{D}$ is chosen as type II or III,
\end{itemize}
where three different types of domains $\mathcal{D}$ are considered:
type I: $\mathcal{D}=[-1,1]\times[-1,1],$
type II: $\mathcal{D}=[-1,1]\times[-0.65,0.65],$
type III: $\mathcal{D}=B_1(0).$

\begin{figure}[t!]
 \centerline{(a)\psfig{figure=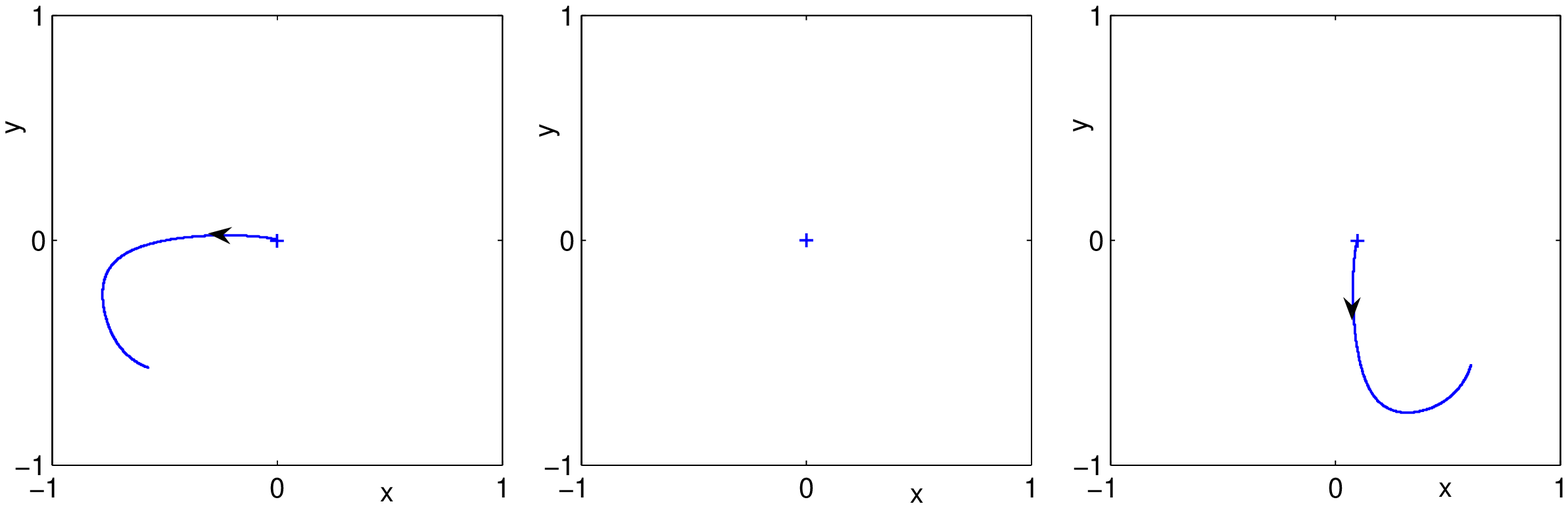,height=4cm,width=12.5cm,angle=0}}
 \centerline{(b)\psfig{figure=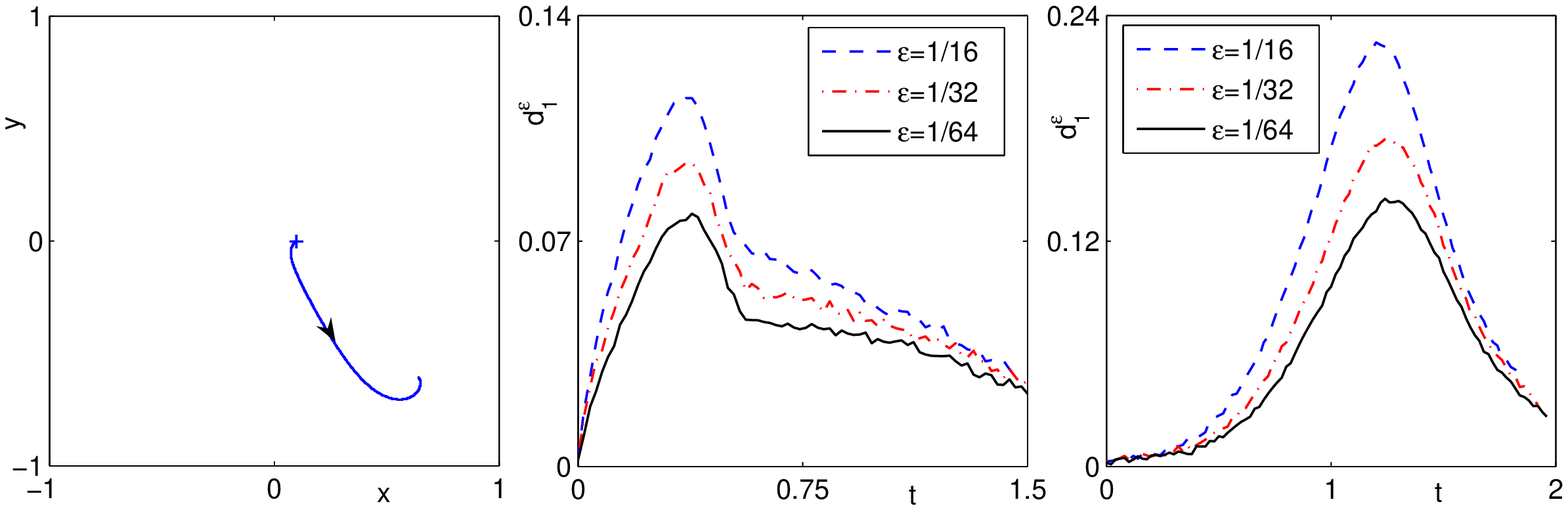,height=4cm,width=12.5cm,angle=0}}
 \caption{ Trajectory of the vortex center in CGLE under Dirichlet BC when $\vep=\frac{1}{32}$
 under four cases, i.e. cases II-IV and VI, and the time evolution of $d_1^\vep$  for different $\vep$ under cases
 II and VI (from left to right and then from top to bottom) in section \ref{sec: cgle_NRfSVD}.}
\label{fig: cgle-one-vortex-conver-D}
\end{figure}

Fig. \ref{fig: cgle-one-vortex-conver-D}
shows the trajectory of the vortex center when $\vep=\frac{1}{32}$
for cases II-IV and VI as well as the time evolution of $d_1^\vep(t)$  for different $\vep$ for cases II and VI,
and the trajectory of the vortex center under cases V-VIII and cases IX-XII are, respectively, shown
by Fig. \ref{fig: cgle-one-vortex-BC_Eff-D} and Fig. \ref{fig: cgle-one-vortex-Geo_Eff-D} when
$\vep=\frac{1}{32}$ in CGLE. Based on these ample numerical results (although some results are not shown here for brevity),
we made the following observations for the single vortex dynamics:
\begin{figure}[t!]
 \centerline{\psfig{figure=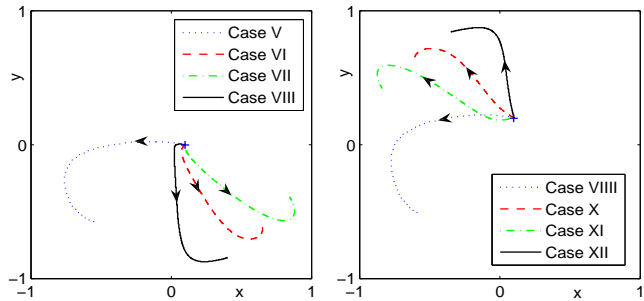,height=4.2cm,width=9cm,angle=0}}
  \caption{Trajectory of the vortex center
   in CGLE under Dirichlet BC when $\vep=\frac{1}{32}$  under cases V-VIII (left) and
   cases IX-XII (right) in section \ref{sec: cgle_NRfSVD}.}
\label{fig: cgle-one-vortex-BC_Eff-D}
 \end{figure}
(i). When $h(\bx)\equiv0$, the vortex center doesn't move, which is similar to the vortex dynamics in
the whole space in GLE and NLSE dynamics.
(ii). When $h(\bx)=(x+by)(x-\frac{y}{b})$ with $b\neq 0$, the vortex does not move if
$\bx_0=(0,0)$, while it does move if $\bx_0\neq (0,0)$ (please see case III and VI for $b=1$).
This is the same as the phenomena in GLE and NLSE dynamics.
(iii). When $h(\bx)\ne0$ and $h(\bx)\neq(x+by)(x-\frac{y}{b})$ with $b\neq 0$, in general,
the vortex center does move to a different point from its initial location and then it will stay there forever.
This is quite different from the corresponding case in the whole space, since in that case a single vortex may move to infinity
under the initial data (\ref{initdbc0}) with  $h(\bx)\ne0$.
(iv). In general, the initial location, the geometry of the domain and the boundary
value will take effect on the motion of the vortex center.
(v). When $\vep\rightarrow 0$, the dynamics of the vortex center
in the CGLE dynamics converges uniformly in time to
that in the reduced dynamics (see Fig. \ref{fig: cgle-one-vortex-conver-D})
which verifies numerically the validation
of the RDLs. In fact, based on our
extensive numerical experiments, the motion of
the vortex center from the RDLs agrees with those
from the CGLE dynamics qualitatively when $0<\vep<1$ and quantitatively
when $0<\vep\ll 1$.

\begin{figure}[t!]
 \centerline{(a)\psfig{figure=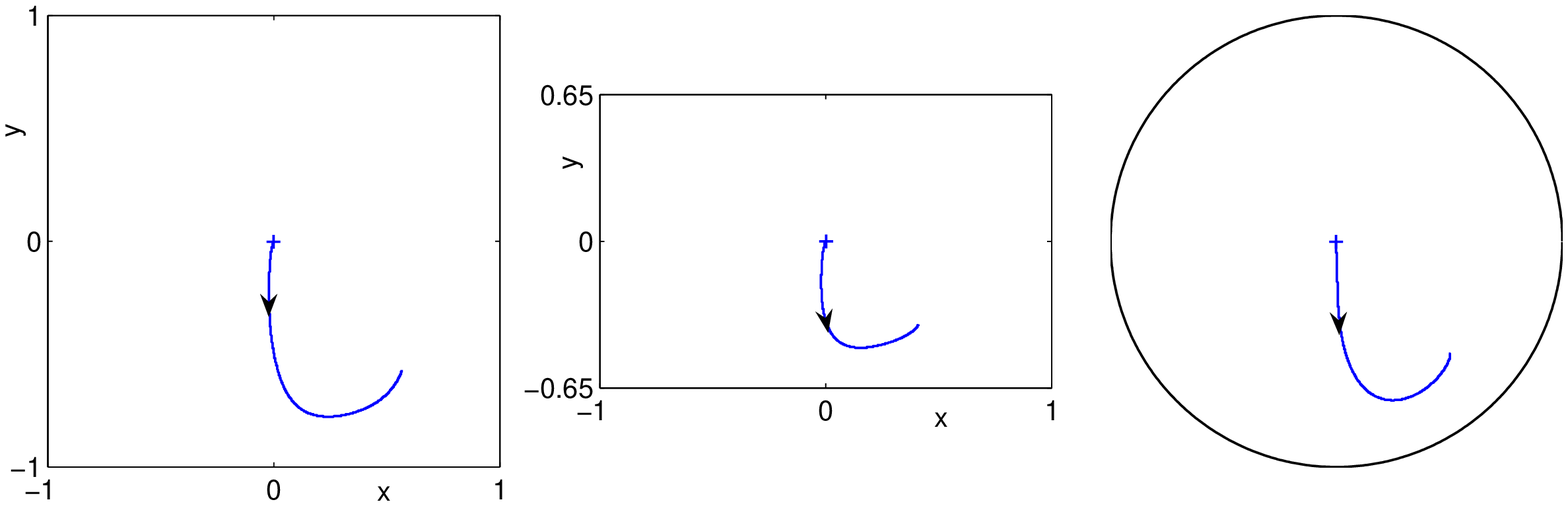,height=4cm,width=12.5cm,angle=0}}
 \centerline{(b)\psfig{figure=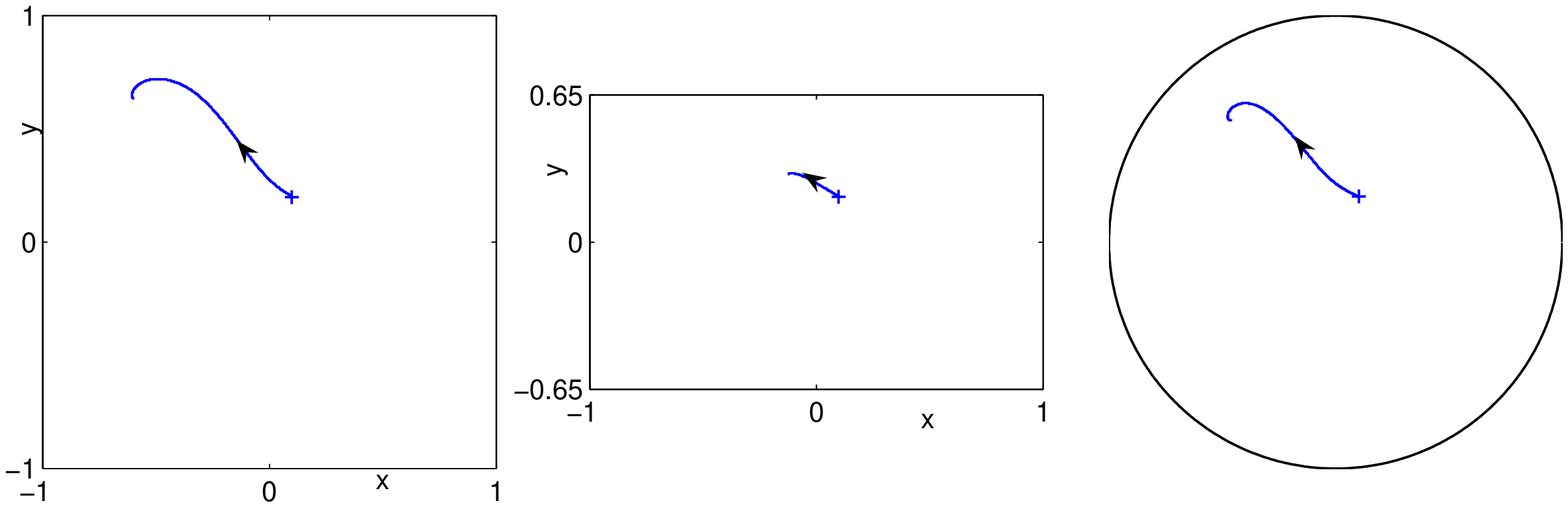,height=4cm,width=12.5cm,angle=0}}
 \caption{Trajectory of the vortex center in CGLE under Dirichlet BC when $\vep=\frac{1}{32}$
 for cases: (a) I, XIII, XIV,  (b) X, XV, XVI (from left to right) in section \ref{sec: cgle_NRfSVD}.}
\label{fig: cgle-one-vortex-Geo_Eff-D}
 \end{figure}

\subsection{Vortex pair}
\label{sec: cgle_NRfVPD}

Here we present numerical results of the interaction of vortex pair in
the CGLE  dynamics and its corresponding reduced dynamics. In the following numerical simulations of the subsection,
we take $M=2$, $n_1=n_2=1$, $\bx_1^0=(-0.3,0)$
and $\bx_2^0=(0.3,0)$ in (\ref{initdbc0}). Fig. \ref{fig: cgle-vpd-modes-ep25}
depicts the trajectory of the vortex centers and their corresponding time evolution of the GL functionals
when $\vep=\frac{1}{25}$ in the CGLE  with different $h(\bx)$ in (\ref{initdbc0}), and Fig.
\ref{fig: cgle-vpd-dens-trace-err} shows contour plots of $|\psi^{\vep}(\bx,t)|$ for
$\vep=\frac{1}{25}$ at different times as well as
the time evolution of $\bx_1^{\vep} (t)$, $\bx_1^{\rm r} (t)$
and $d_1^{\vep} (t)$ for different $\vep$ with $h(\bx)=0$ in (\ref{initdbc0}).

\begin{figure}
 \centerline{(a)\psfig{figure=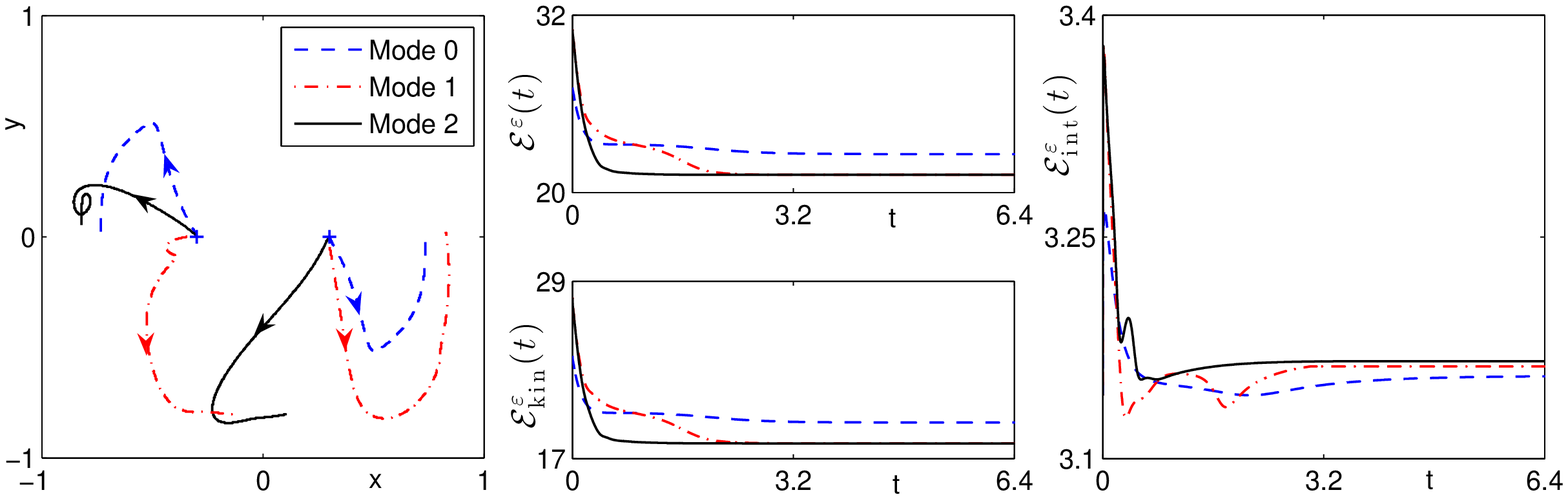,height=4cm,width=12.5cm,angle=0}}
 \centerline{(b)\psfig{figure=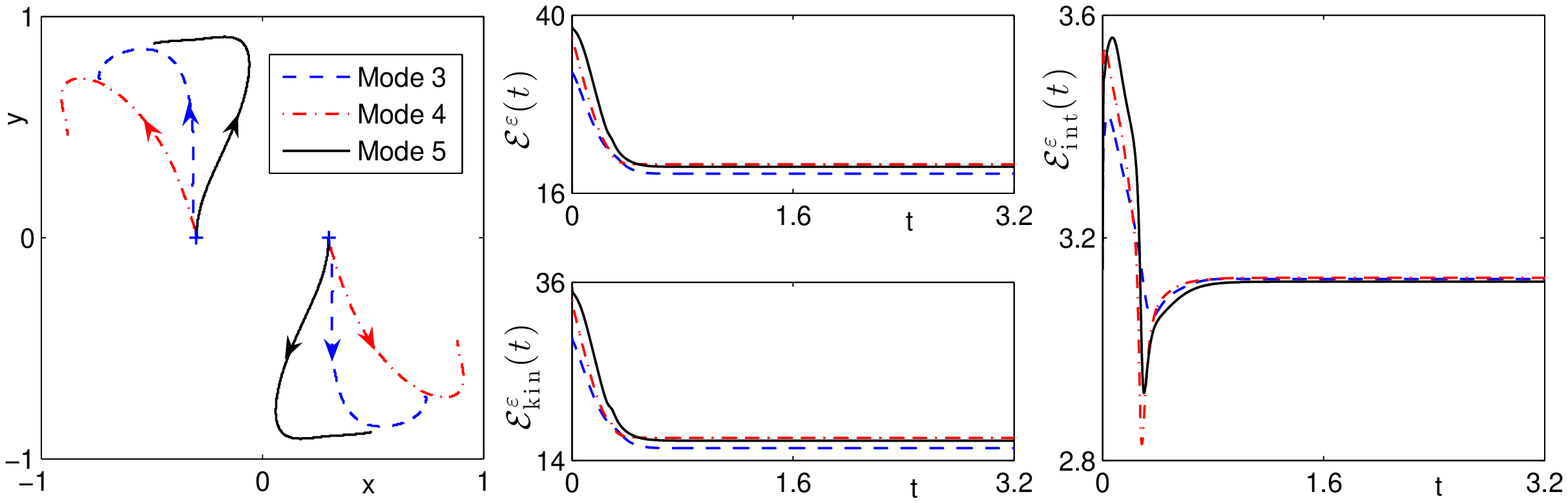,height=4cm,width=12.5cm,angle=0}}
 \caption{ Trajectory of the vortex centers (a) and their corresponding time evolution of the GL functionals (b) in  CGLE dynamics
  under Dirichlet BC when $\vep=\frac{1}{25}$  with different $h(\bx)$ in (\ref{initdbc0})  in section \ref{sec: cgle_NRfVPD}.}
\label{fig: cgle-vpd-modes-ep25}
 \end{figure}

According to our ample numerical experiments,
we made the following observations for the interaction
of vortex pair in the CGLE dynamics with Dirichlet BC:
(i). The motion of the vortex pair may be thought of as
a kind of combination between that in the GLE and NLSE dynamics with Dirichlet BC.
From Figs. \ref{fig: cgle-vpd-modes-ep25}-\ref{fig: cgle-vpd-dens-trace-err},
we observed that the two vortices undergo a repulsive interaction, and
they first rotate with each other and meanwhile move apart from each
other towards the boundary of the domain, then stop somewhere near the
boundary, which indicates that the boundary of the domain imposes a repulsive force
on the two vortices.
As shown in previous studies \cite{BT,BT1}, a vortex pair in the GLE dynamics moves outward along the line that connects with the two vortices and finally stay static near the boundary,
while in the NLSE dynamics the two vortices always rotate around each other periodically. In fact, based on our extensive numerical
results, we found that the larger the value $\beta$ (or $\alpha$) is, the closer the motion in CGLE dynamics is
to that in NLSE (or GLE) dynamics,  which gives the sufficient numerical evidence for our above conclusion.
(ii).  The phase shift $h(\bx)$ affects the motion of the vortices significantly.
When $h(\bx)=(x+by)(x-\frac{y}{b})$ with $b\neq 0$, the vortices will move outward
symmetric with respect to the origin, i.e., $\bx_1(t)=-\bx_2(t)$ (see Fig. \ref{fig: cgle-vpd-modes-ep25}).
(iii). When $\vep\rightarrow 0$, the dynamics of the two vortex centers
in the CGLE dynamics converges uniformly in time to
that in the reduced dynamics (see Fig. \ref{fig: cgle-vpd-dens-trace-err})
 which verifies numerically the validation
of the RDLs in this case. In fact, based on our
extensive numerical experiments, the motions of
the two vortex centers from the RDLs agree with those
from the CGLE dynamics qualitatively when $0<\vep<1$ and quantitatively
when $0<\vep\ll 1$.
(iv). During the dynamics evolution of CGLE, the GL functional and its kinetic part
decrease as the time evolves, its interaction part changes dramatically
when $t$ is small, and when $t\to \infty$,  all the three quantities
converge to constants (see Fig. \ref{fig: cgle-vpd-modes-ep25}),
which immediately indicates that a steady state solution will be reached when $t\to\infty$.

\begin{figure}
\centerline{\quad\psfig{figure=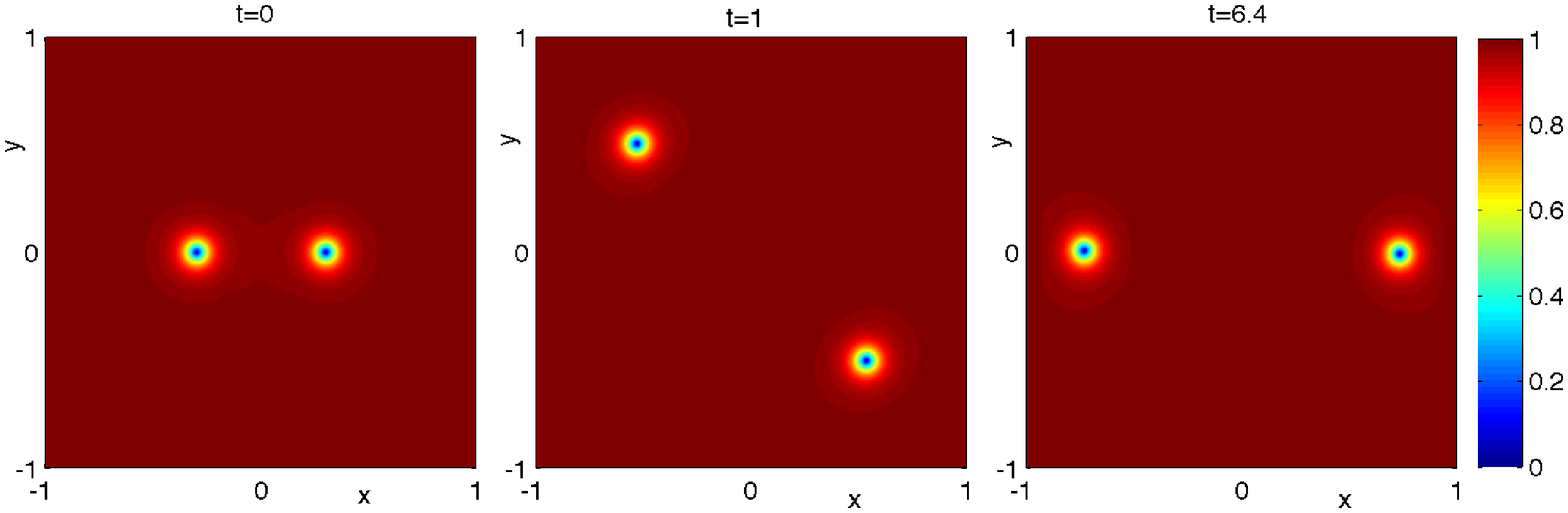,height=4cm,width=12.5cm,angle=0}}
 \centerline{\psfig{figure=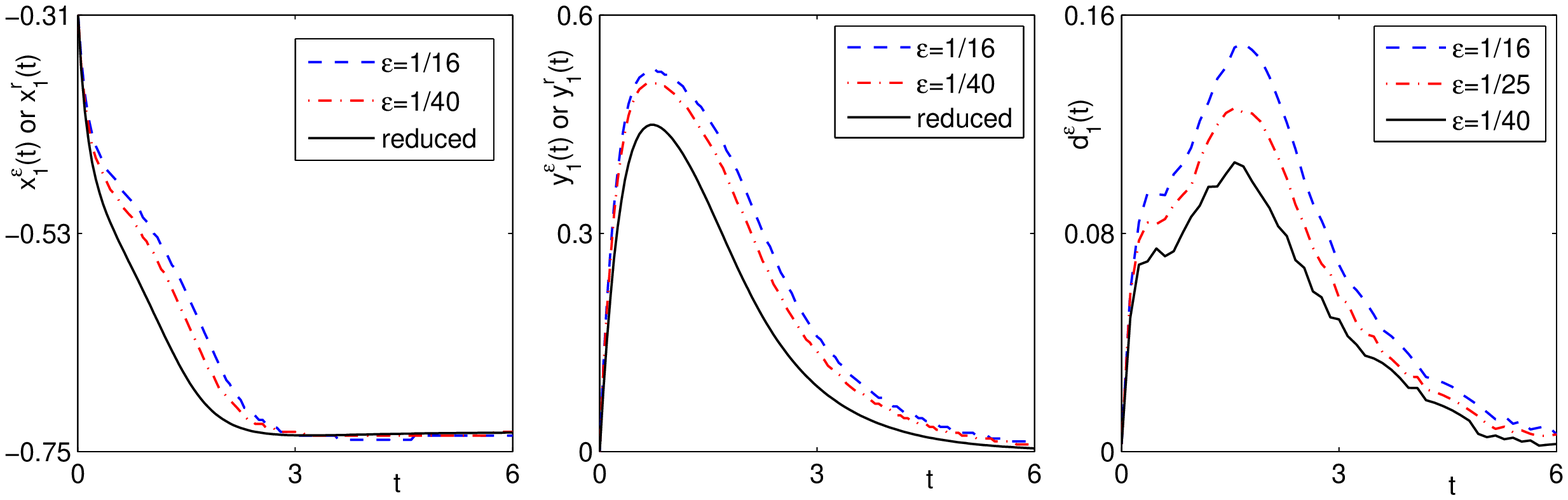,height=4cm,width=12.5cm,angle=0}}
 \caption{ Contour plot of $|\psi^{\vep}(\bx,t)|$ for $\vep=\frac{1}{25}$ at different times as well as  time evolution of $\bx_1^{\vep} (t)$ in  CGLE dynamics and $\bx_1^{\rm r} (t)$ in the reduced dynamics
  under Dirichlet BC with $h(\bx)=0$ in (\ref{initdbc0}) and their difference $d_1^{\vep} (t)$
for different $\vep$  in section \ref{sec: cgle_NRfVPD}.}
\label{fig: cgle-vpd-dens-trace-err}
 \end{figure}

\subsection{Vortex dipole}
 \label{sec: cgle_NRfVDD}

Here we present numerical results of the interaction of vortex dipole under
the CGLE  dynamics and its corresponding reduced
dynamical laws. We choose the parameters in the simulations as $M=2$, $n_1=-n_2=-1$, $\bx_2^0=-\bx_1^0=(0.3,0)$
in (\ref{initdbc0}). Fig. \ref{fig: cgle-vdd-modes-ep25}
depicts the trajectory of the vortex centers and their corresponding time evolution of the GL functionals
when $\vep=\frac{1}{25}$ in the CGLE with different $h(\bx)$ in (\ref{initdbc0}), and Fig.
\ref{fig: cgle-vdd-trace-err} shows
contour plot of $|\psi^{\vep}(\bx,t)|$ for $\vep=\frac{1}{25}$ at different times as well as
 the time evolution of $\bx_1^{\vep} (t)$, $\bx_1^{\rm r} (t)$
and $d_1^{\vep} (t)$ for different $\vep$ with $h(\bx)=0$ in (\ref{initdbc0}).
\begin{figure}[t!]
 \centerline{(a)\psfig{figure=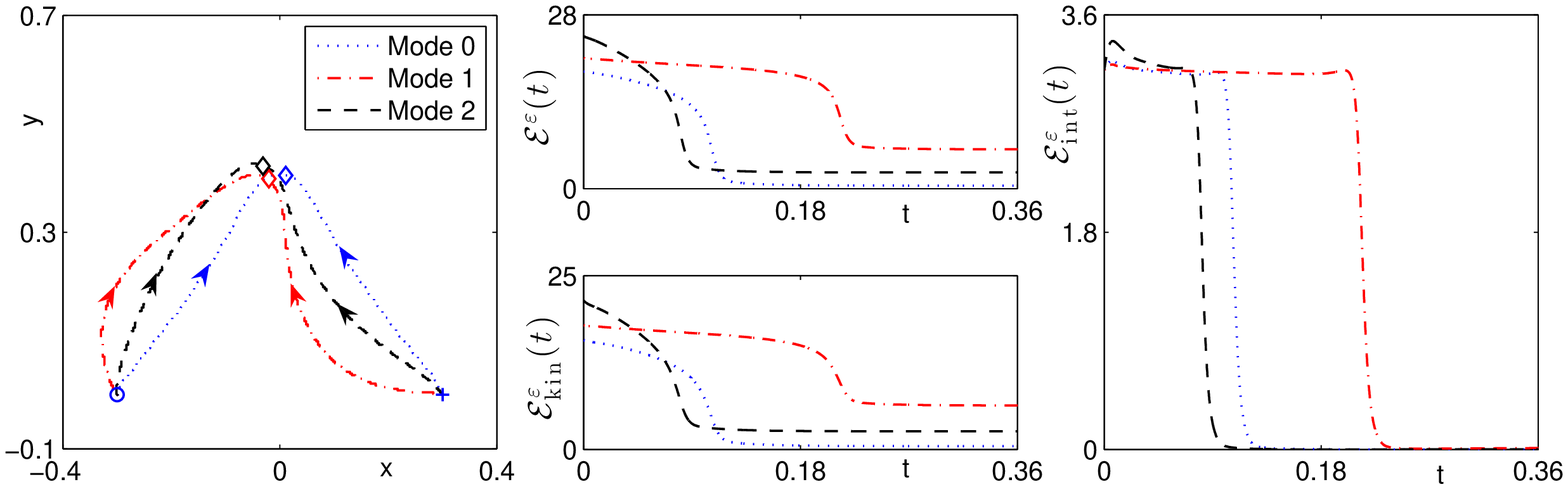,height=4cm,width=12.5cm,angle=0}}
 \centerline{(b)\psfig{figure=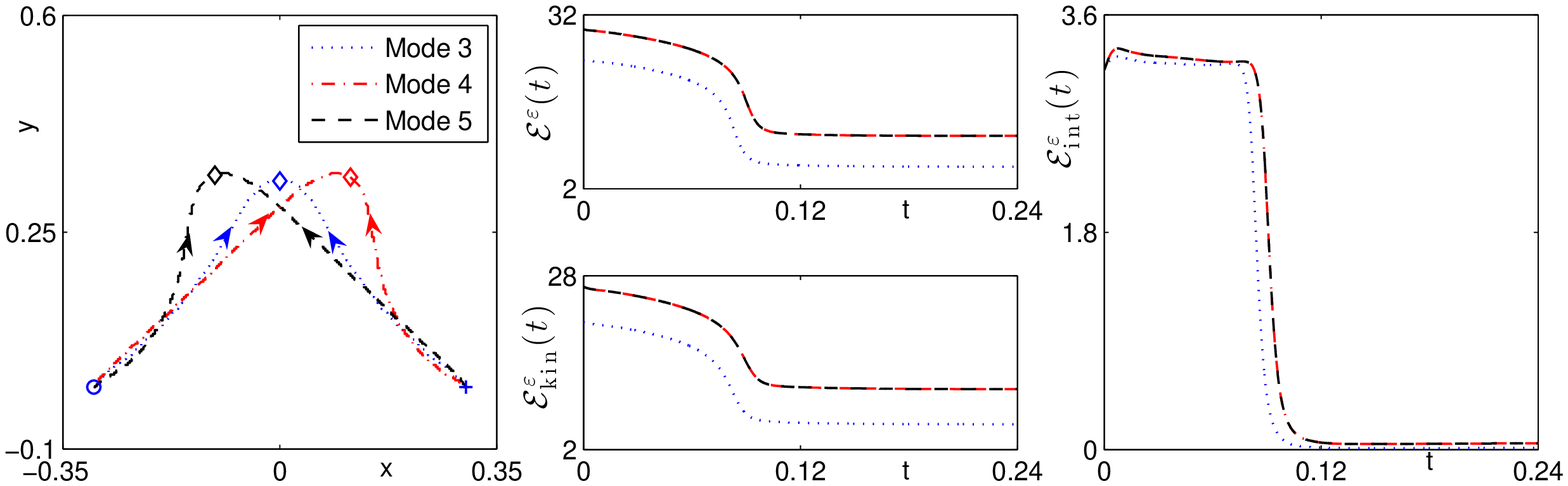,height=4cm,width=12.5cm,angle=0}}
 \caption{ Trajectory of the vortex centers (a) and their corresponding time evolution of the GL functionals (b) in  CGLE dynamics
  under Dirichlet BC when $\vep=\frac{1}{25}$  with different $h(\bx)$ in (\ref{initdbc0}) in section \ref{sec: cgle_NRfVDD}.}
\label{fig: cgle-vdd-modes-ep25}
\end{figure}

From Figs. \ref{fig: cgle-vdd-modes-ep25}-\ref{fig: cgle-vdd-trace-err} and
ample numerical experiments (not shown here for brevity),
we made the following observations for the interaction
of vortex dipole in the CGLE dynamics with Dirichlet BC:
(i). The two vortices undergo an
attractive interaction, they will collide and annihilate with each other.
(ii). The phase shift $h(\bx)$ and the initial distance of the two vortices
affect the motion of the vortices significantly. If $h(\bx)=0$,
regardless of where the vortices are initially located,
the vortex dipole will finally merge. However, similar as the
case in GLE dynamics, if $h(\bx)\not\equiv0$, say $h(\bx)=x+y$ for example,
there would be a critical distance $d_c^\vep$, which depends on the value of $\vep$,
that divides the motion of the vortex dipole into two cases: (a) if the initial distance between
the vortex dipole $|\bx_2^0-\bx_1^0|>d_c^\vep$, the vortex will never merge, they
will finally stay static and separate at somewhere near the boundary. (b) otherwise,
they do finally merge and  annihilate.
(iii). For $h(\bx)=0$,  when $\vep\rightarrow 0$, the dynamics of the two vortex centers
in the CGLE dynamics converges uniformly in time to
that in the reduced dynamics (see Fig. \ref{fig: cgle-vdd-trace-err}),
which verifies numerically the validation
of the RDLs in this case. In fact, based on our
extensive numerical experiments, the motions of
the two vortex centers from the RDLs agree with those
from the CGLE dynamics qualitatively when $0<\vep<1$ and quantitatively
when $0<\vep\ll 1$ before they merge.
(iv). During the dynamics evolution of CGLE, the GL functional decreases as
the time evolves, its kinetic and interaction parts don't change dramatically
when $t$ is small, while all the three quantities
converge to constants when $t\to \infty$.
Moreover, if finite time merging/annihilation happens, the GL functional and its
kinetic and interaction parts change significantly during the collision.
In addition, when $t\to\infty$, the interaction energy goes to $0$
which immediately implies that a steady state will be reached
in the form of $\phi^{\vep}(\bx)=e^{ic(\bx)}$,  where $c(\bx)$ is
a harmonic function satisfying
$c(\bx)|_{\partial \mathcal{D}}=h(\bx)+\sum_{j=1}^{M}n_j\theta(\bx-\bx^0_j)$.

\begin{figure}[t!]
\centerline{\quad\psfig{figure=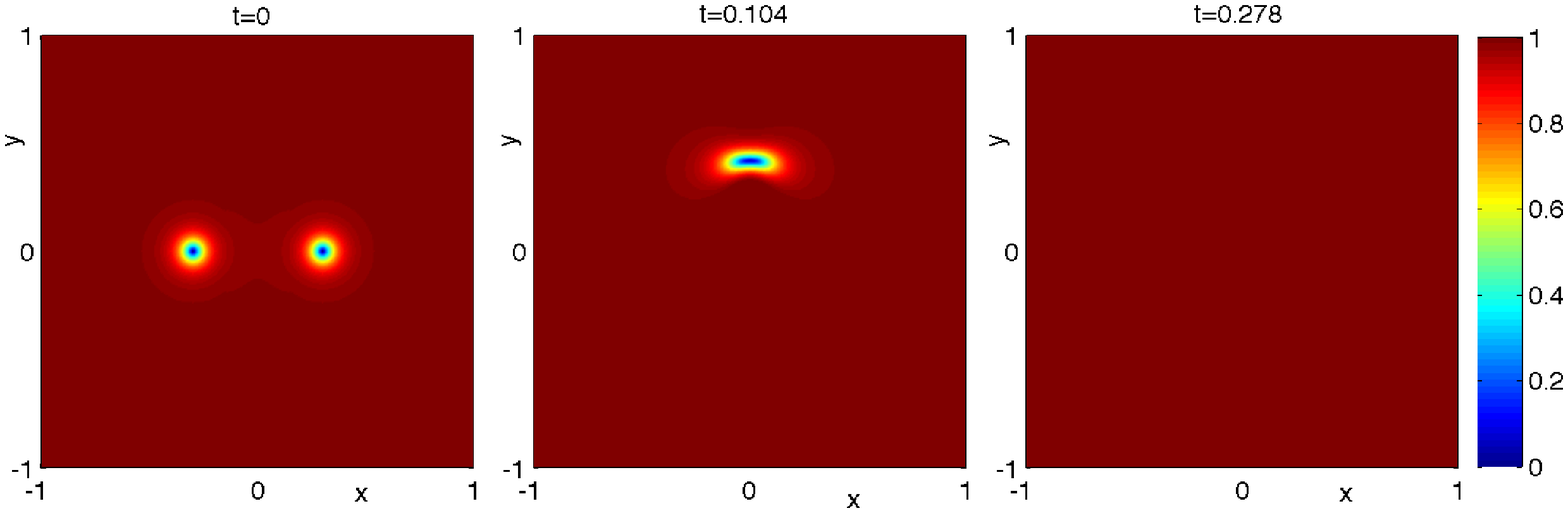,height=4cm,width=12.5cm,angle=0}}
 \centerline{\psfig{figure=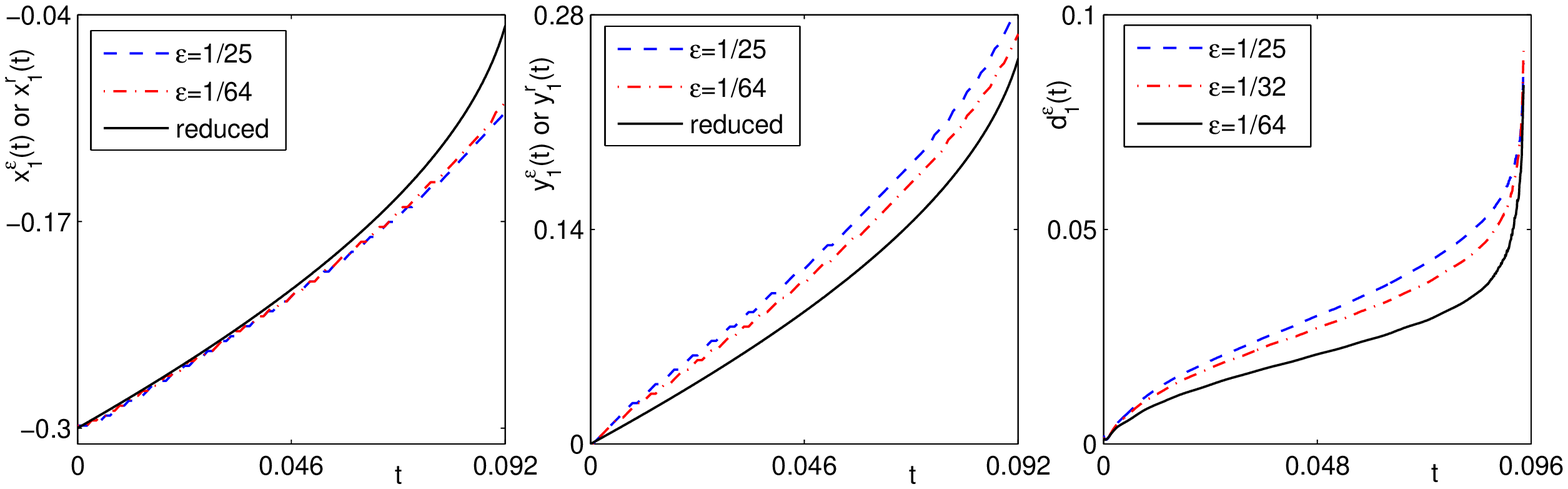,height=4cm,width=12.5cm,angle=0}}
 \caption{Contour plots of $|\psi^{\vep}(\bx,t)|$ for $\vep=\frac{1}{25}$ at different times as well as
 time evolution of $\bx_1^{\vep} (t)$ in  CGLE dynamics, $\bx_1^{\rm r} (t)$ in the reduced dynamics
  under Dirichlet BC with $h(\bx)=0$ in (\ref{initdbc0}) and their difference $d_1^{\vep} (t)$
for different $\vep$  in section \ref{sec: cgle_NRfVDD}.}
\label{fig: cgle-vdd-trace-err}
 \end{figure}

\subsection{Vortex lattice}
 \label{sec: cgle_NRfVLD}

Here we present numerical results about the interaction of vortex lattices under
the CGLE dynamics. We consider the following 15 cases:
case I. $M=3$, $n_1=n_2=n_3=1$,
		 $\bx_1^0=(0.5, 0)$;
         $\bx_2^0=(-0.25,\frac{\sqrt{3}}{4})$,
         $\bx_3^0=(-0.25,-\frac{\sqrt{3}}{4})$,
case II. $M=3$,  $n_1=n_2=n_3=1$, $\bx_1^0=(-0.4,0)$,
		 $\bx_2^0=(0,0)$, $\bx_3^0=(0.4,0)$;
case III. $M=3$, $n_1=n_2=n_3=1$, $\bx_1^0=(0,0.3)$,
		  $\bx_2^0=(0.15,0.15)$, $\bx_3^0=(0.3,0)$;		
case IV. $M=3$, $-n_1=n_2=n_3=1$,
         $\bx_1^0=(0.5, 0)$;
		 $\bx_2^0=(-0.25,\frac{\sqrt{3}}{4})$,
         $\bx_3^0=(-0.25,-\frac{\sqrt{3}}{4})$,
case V. $M=3$, $n_2=-1$, $n_1=n_3=1$,
		$\bx_1^0=(-0.4,0)$,
		$\bx_2^0=(0,0)$,
		$\bx_3^0=(0.4,0)$;
case VI. $M=3$, $n_1=-1$, $n_2=n_3=1$,
		 $\bx_1^0=(0.2,0.3)$,
		 $\bx_2^0=(-0.3,0.4)$,
		 $\bx_3^0=(-0.4,-0.2)$;
case VII. $M=4$, $n_1=n_2=n_3=n_4=1$,
		  $\bx_1^0=(0.5,0)$,
		  $\bx_2^0=(0,0.5)$,
		  $\bx_3^0=(-0.5,0)$,
		  $\bx_4^0=(0,-0.5)$;
case VIII. $M=4$, $n_1=n_3=1$, $n_2=n_4=-1$,
		  $\bx_1^0=(0.5,0)$,
		  $\bx_2^0=(0,0.5)$,
		  $\bx_3^0=(-0.5,0)$,
		  $\bx_4^0=(0,-0.5)$;
case IX.  $M=4$, $n_2=n_3=-1$, $n_1=n_4=1$,
		  $\bx_1^0=(0.5,0)$,
		  $\bx_2^0=(0,0.5)$,
		  $\bx_3^0=(-0.5,0)$,
		  $\bx_4^0=(0,-0.5)$;	
case X.  $M=4$, $n_1=n_3=1$, $n_2=n_4=-1$,
		  $\bx_1^0=(0.5,0.5)$,
		  $\bx_2^0=(-0.5,0.5)$,
		  $\bx_3^0=(-0.5,-0.5)$,
		  $\bx_4^0=(0.5,-0.5)$;		
case XI.  $M=4$, $n_2=n_3=-1$, $n_1=n_4=1$,
		  $\bx_1^0=(0.5,0.5)$,
		  $\bx_2^0=(-0.5,0.5)$,
		  $\bx_3^0=(-0.5,-0.5)$,
		  $\bx_4^0=(0.5,-0.5)$;		  		  	
case XII.  $M=4$, $n_1=n_3=-1$, $n_2=n_4=1$,
		  $\bx_1^0=(0.4,0)$,
		  $\bx_2^0=(-0.4/3,0)$,
		  $\bx_3^0=(0.4/3,0)$,
		  $\bx_4^0=(0.4,0)$;			  		
case XIII.  $M=4$, $n_2=n_3=-1$, $n_1=n_4=1$,
		  $\bx_1^0=(0.4,0)$,
		  $\bx_2^0=(-0.4/3,0)$,
		  $\bx_3^0=(0.4/3,0)$,
		  $\bx_4^0=(0.4,0)$;			  		
case XIV.  $M=4$, $n_1=n_2=-1$, $n_3=n_4=1$,
		  $\bx_1^0=(0.4,0)$,
		  $\bx_2^0=(-0.4/3,0)$,
		  $\bx_3^0=(0.4/3,0)$,
		  $\bx_4^0=(0.4,0)$;			  		
case XV.  $M=4$, $n_1=n_3=-1$, $n_2=n_4=1$,
		 $\bx_1^0=(0.2,0.3)$,
		 $\bx_2^0=(-0.3,0.4)$,
		 $\bx_3^0=(-0.4,-0.2)$;	
		 $\bx_4^0=(0.3,-0.3)$.

\begin{figure}[htbp]
\centerline{\psfig{figure=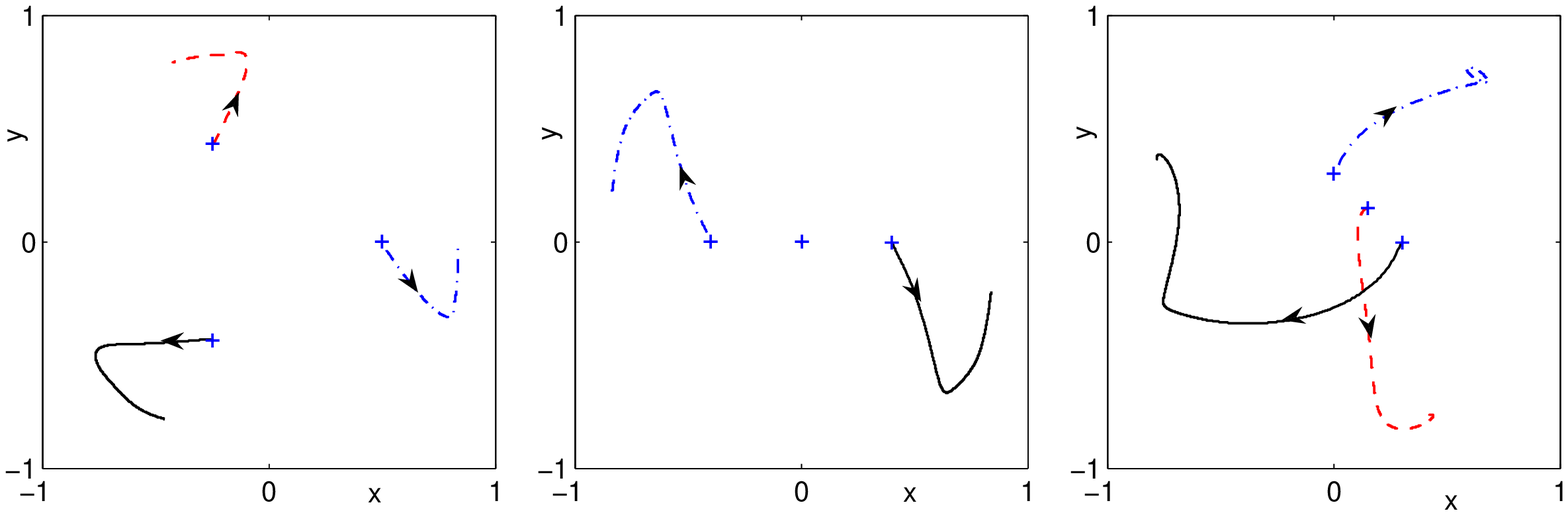,height=4cm,width=12.5cm,angle=0}}
\centerline{\psfig{figure=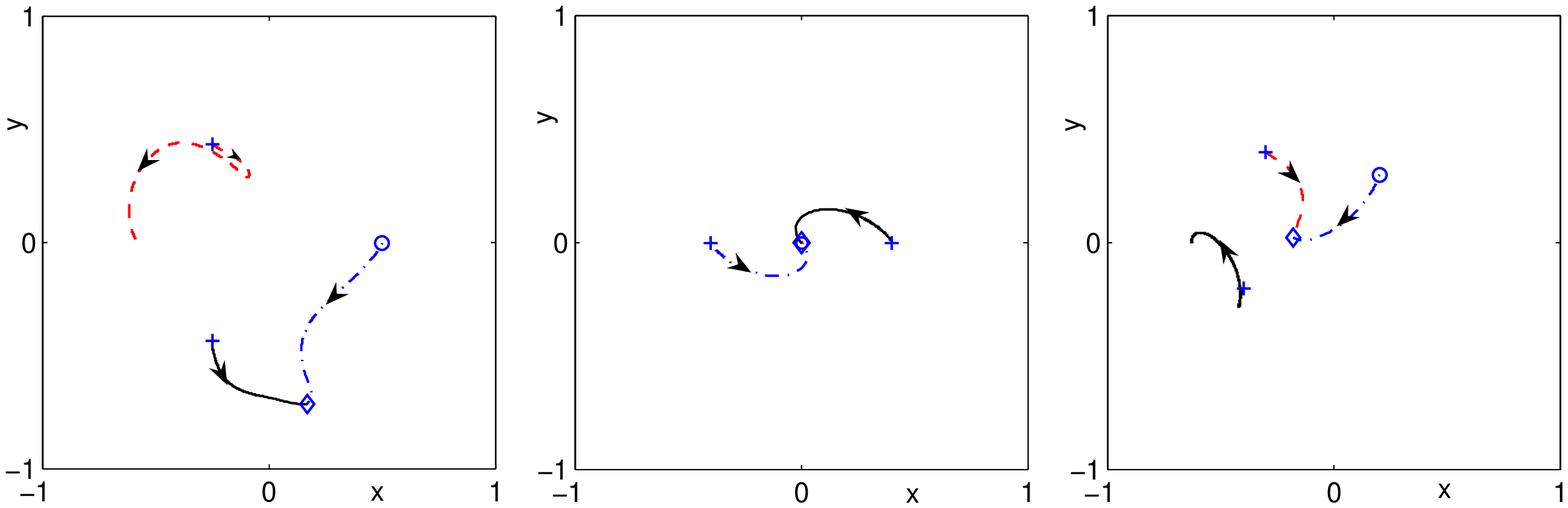,height=4cm,width=12.5cm,angle=0}}
\centerline{\psfig{figure=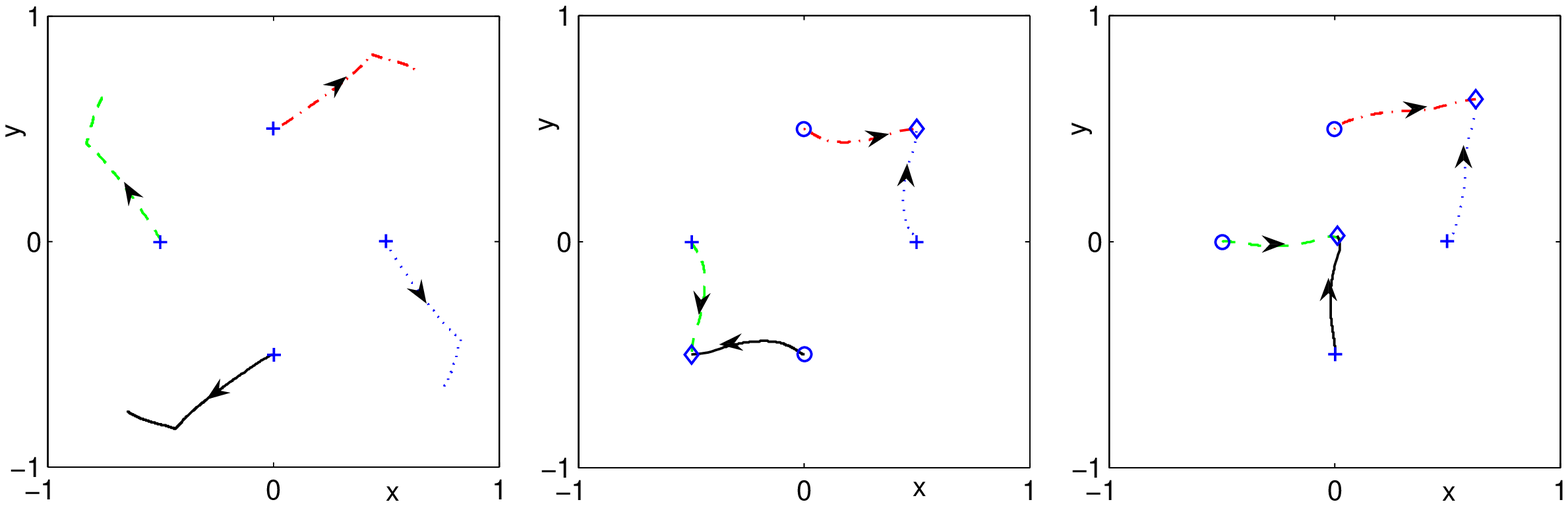,height=4cm,width=12.5cm,angle=0}}
\centerline{\psfig{figure=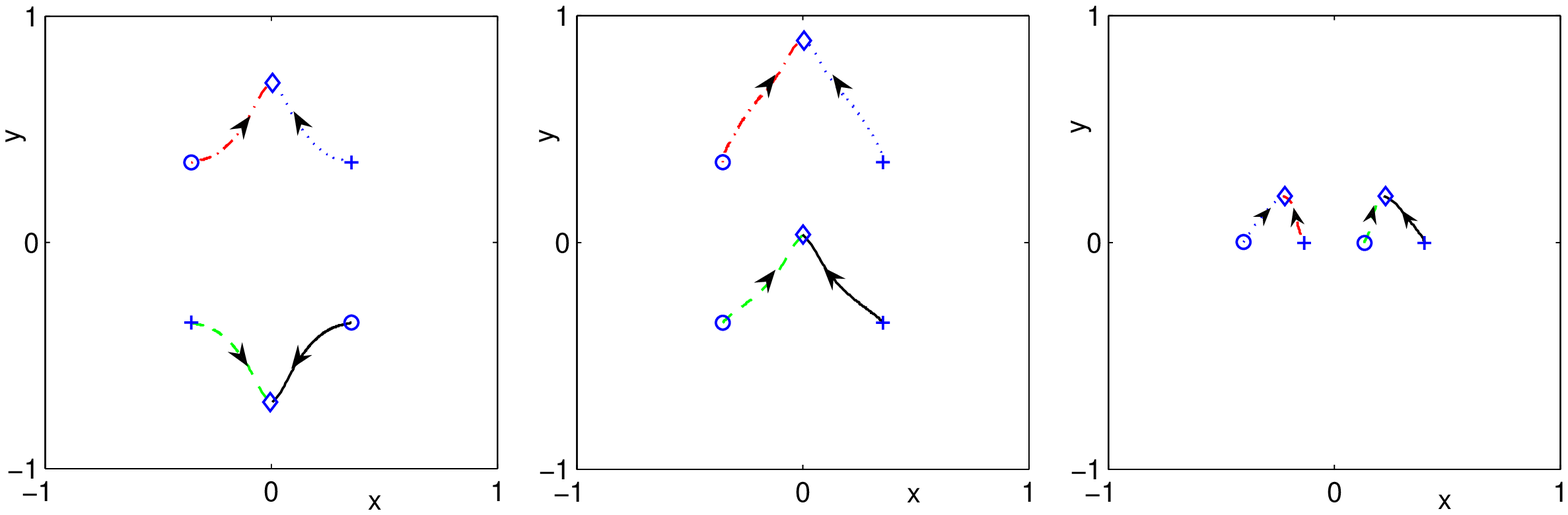,height=4cm,width=12.5cm,angle=0}}
\centerline{\psfig{figure=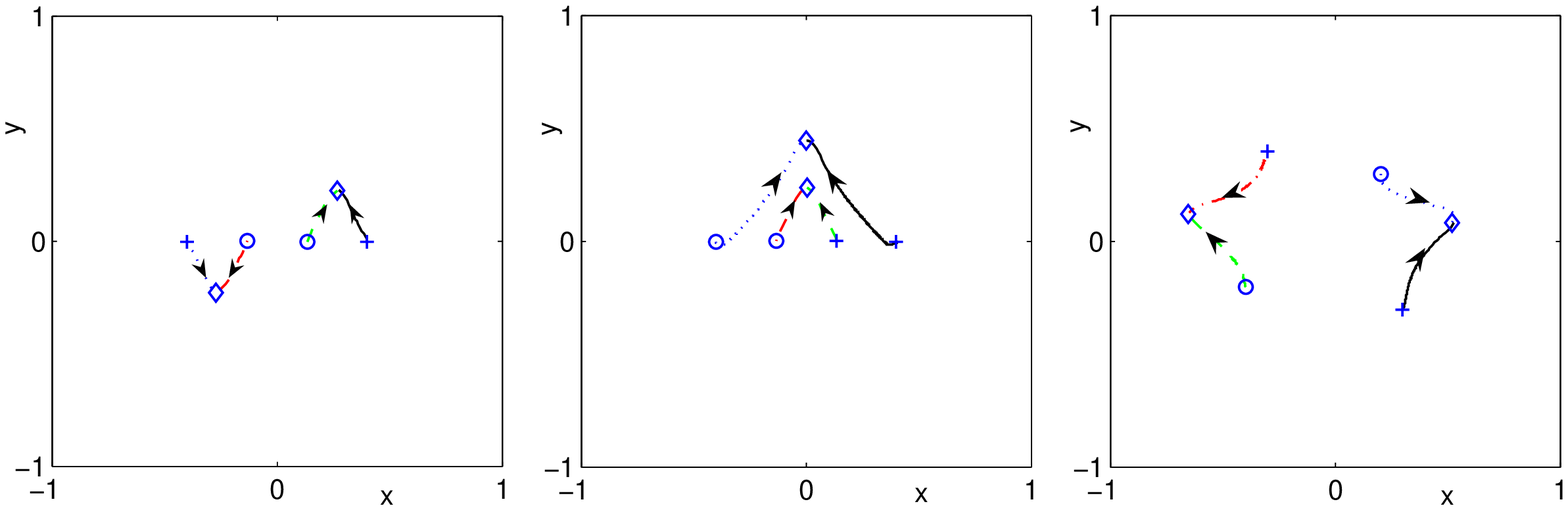,height=4cm,width=12.5cm,angle=0}}
 \caption{ Trajectory of vortex centers  for the interaction of
different vortex lattices in GLE under Dirichlet BC
with $\vep=\frac{1}{32}$ and $h(\bx)=0$ for  cases I-IX (from left to right and then from top to bottom)
in section \ref{sec: cgle_NRfVLD}. }
\label{fig: cgle-vortex-lattice-D}
 \end{figure}

Fig. \ref{fig: cgle-vortex-lattice-D} shows the trajectory of the vortex centers when $\vep=\frac{1}{32}$
in CGLE (\ref{cgle}) and $h(\bx)=0$ in (\ref{initdbc0}) for the above 15 cases.
From Fig. \ref{fig: cgle-vortex-lattice-D} and ample numerical experiments (not shown here for brevity),
we made the following observations:
(i). The dynamics and interaction of vortex lattices under the CGLE dynamics with Dirichlet BC
depends on its initial alignment of the lattice, geometry of the domain $\mathcal{D}$ and
the boundary value $g(\bx)$.
(ii). For a lattice of $M$ vortices, if they have the same index, then no collisions will
happen for any time $t\ge0$. On the other hand, if they have opposite index, e.g. $M^+>0$ vortices with index `$+1$' and $M^->0$ vortices with
index `$-1$' satisfying $M^++M^-=M$, collisions will always happen at finite time. More precisely,
when $t$ is sufficiently large, there exist exactly $|M^+-M^-|$ vortices with winding number `$+1$' if
$M^+> M^-$; while if $M^+< M^-$, there exist exactly $|M^+-M^-|$ vortices with winding number `$-1$'.

\begin{figure}[t!]
\centerline{(a)\;\psfig{figure=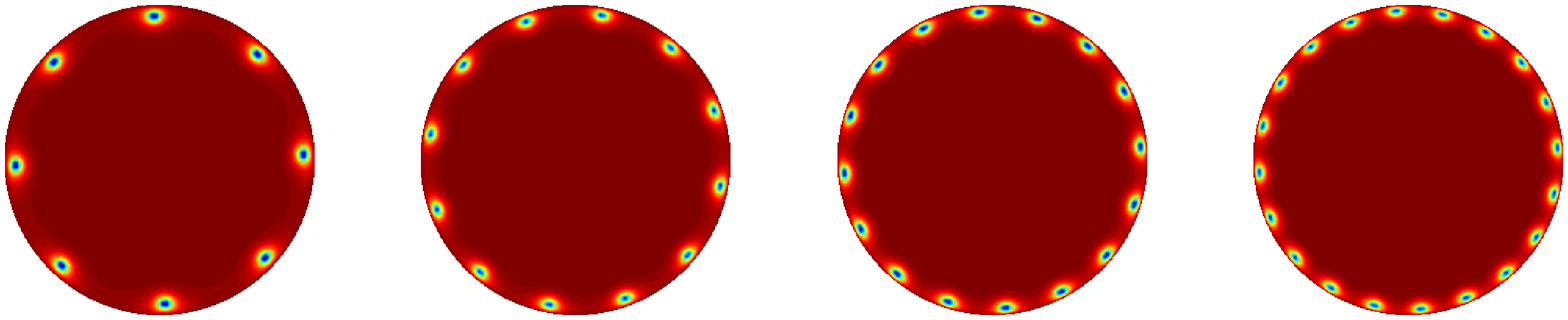,height=3.0cm,width=13.7cm,angle=0}}
\centerline{(b)\;\psfig{figure=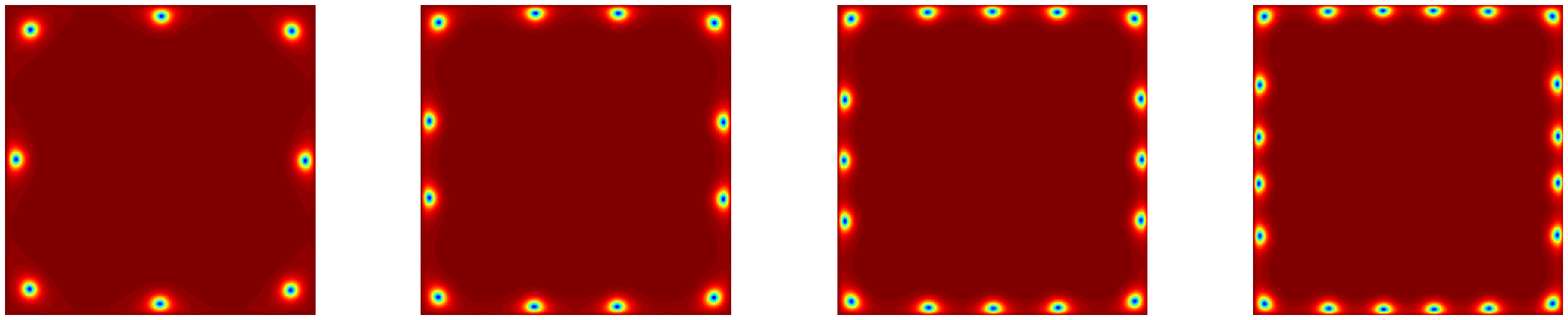,height=3.0cm,width=13.7cm,angle=0}}
\centerline{(c)\quad\psfig{figure=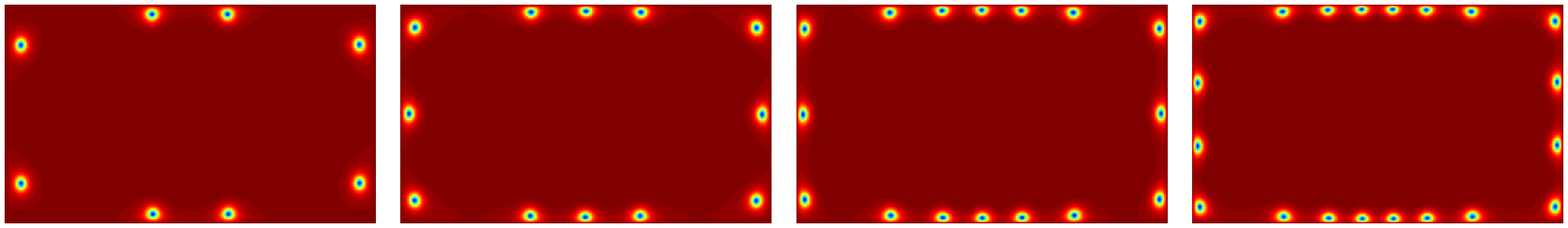,height=3.0cm,width=13.7cm,angle=0}}
 \caption{Contour plots of $|\phi^\vep(\bx)|$ for the steady states of vortex
lattice in CGLE under Dirichlet BC with $\vep=\frac{1}{32}$ for $M=8,\; 12,\; 16,\; 20$ (from left column to right column)
and different domains:
(a) unit disk $\mathcal{D}=B_1({\bf 0})$, (b) square domain $\mathcal{D}=[-1,1]^2$, (c) rectangular domain $\mathcal{D}=[-1.6,1.6]\times[-0.8,0.8]$.}
\label{fig: cgle-lattice_hx3_diff_domain}
\end{figure}


In order to study how the geometry of the domain and boundary conditions
take effect on the alignment of vortices in the steady state patterns
in the CGLE dynamics under Dirichlet BC, we made the following set-up
for our numerical computations. We chose the parameters as $\vep=\frac{1}{32}$,
$$n_j=1,\qquad \bx_j^{0}=0.5\left(\cos\left(\frac{2j\pi}{M}\right),
\sin\left(\frac{2j\pi}{M}\right)\right),\qquad j=1,2,\ldots,M,$$
i.e., initially we have $M$
like vortices which are located uniformly in a circle centered at origin with radius
$R_1=0.5$.
Denote $\phi^\vep(\bx)$ as the steady state, i.e.,
$\phi^\vep(\bx) =\lim_{t\to\infty}\psi^\vep(\bx,t)$ for
$\bx\in \mathcal{D}$. Fig. \ref{fig: cgle-lattice_hx3_diff_domain} depicts the
contour plots of the amplitude $|\phi^\vep|$ of the steady
state in the CGLE dynamics with $h(\bx)=0$ in
(\ref{initdbc0}) for different $M$ and domains,
and Fig. \ref{fig: cgle-lattice_den_disk_square_N12_diffhx} depicts
similar results with $M=12$ for different  $h(\bx)$ in (\ref{initdbc0}).

\begin{figure}[t!]
\centerline{\psfig{figure=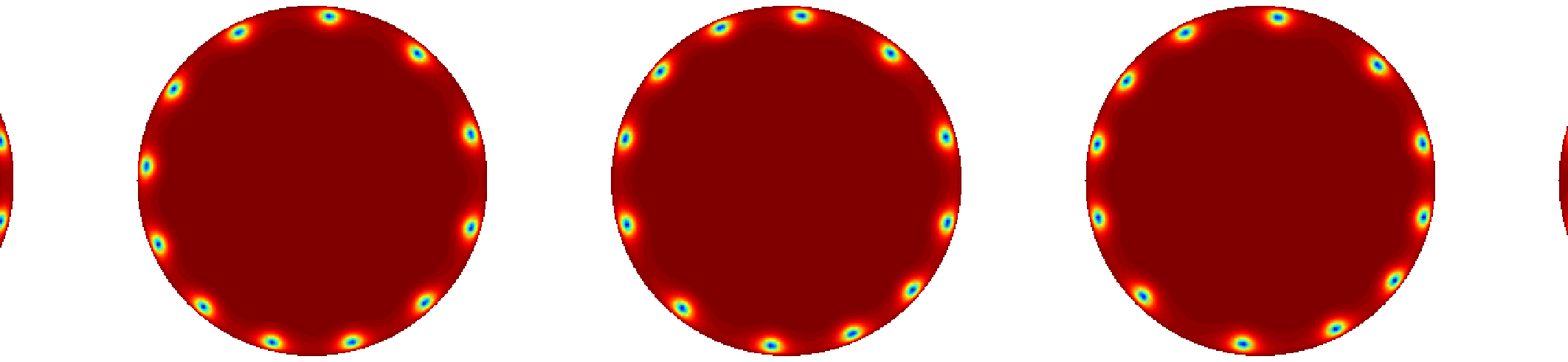,height=2.80cm,width=15cm,angle=0}}
\centerline{\psfig{figure=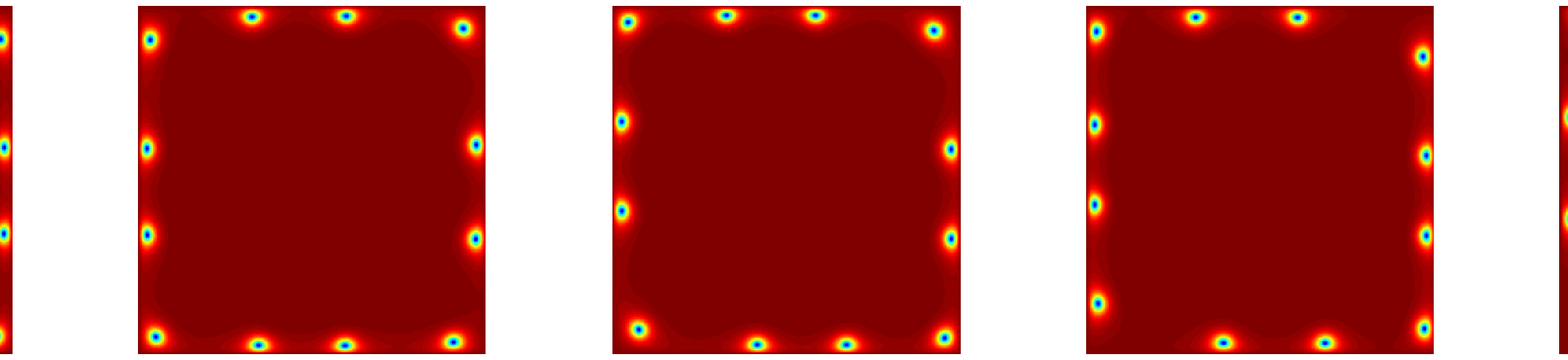,height=2.80cm,width=15cm,angle=0}}
\centerline{\psfig{figure=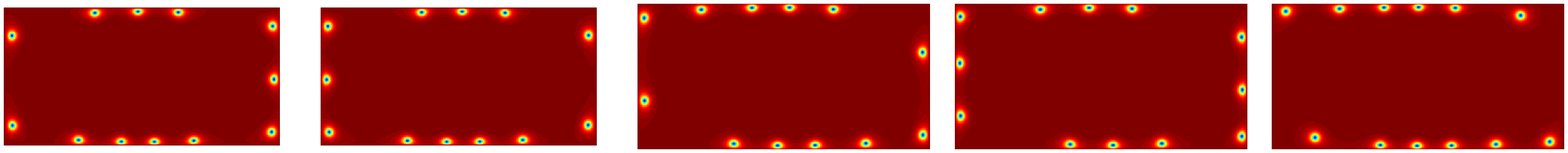,height=3.30cm,width=15cm,angle=0}}
\caption{Contour plots of $|\phi^\vep(\bx)|$ for the steady states of vortex
lattice in CGLE under Dirichlet BC with $\vep=\frac{1}{32}$ and
$M=12$ on a unit disk $\mathcal{D}=B_1({\bf 0})$ (top row) or a square $\mathcal{D}=[-1,1]^2$ (middle row) or a
rectangular domain $\mathcal{D}=[-1.6,1.6]\times[-0.8,0.8]$ (bottom row)
under different $h(\bx)=x+y, \; x^2-y^2,\ x-y,\; x^2-y^2+2xy,\;x^2-y^2-2xy$ (from left column to right column).}
\label{fig: cgle-lattice_den_disk_square_N12_diffhx}
\end{figure}

Based on Figs. \ref{fig: cgle-lattice_hx3_diff_domain}-\ref{fig: cgle-lattice_den_disk_square_N12_diffhx}
and ample numerical results (not shown here for brevity),
we made the following observations for
the steady state patterns of vortex lattices under the CGLE dynamics
with Dirichlet BC:
(i). The vortex lattices with the same winding number undergo repulsive interaction between each other and finally they move to
somewhere near the boundary of the domain. During the evolution process,
no particle-like collision phenomena happen and a
steady state pattern is finally formed when $t\to\infty$. As a matter of fact, the
steady state is also the solution of the following minimization problem
\[\phi^\vep =\displaystyle {\rm argmin}_{\phi(\bx)|_{\bx\in\partial\mathcal{D}}=
\psi_0^\vep(\bx)|_{\bx\in\partial\mathcal{D}} } \mathcal{E}^\vep(\phi).\]
(ii). Both the geometry of the domain and the phase shift,
i.e. $h(\bx)$, will take significant effect on steady state patterns.
(iii). At the steady state, the distance between the vortex centers
 and the boundary of the domain depends on $\vep$ and $M$.
 If $M$ is fixed, when $\vep$ decreases,
 the distance decreases; while if $\vep$ is fixed,
 when $M$ increases, the distance decreases. We remark it here as
 an interesting open problem to find how the width depends on the
 value of $\vep$, the boundary condition as well as the geometry of
 the domain.

\section{Numerical results under Neumann BC}
\label{sec: cgle_Neu}

In this section, we report numerical results
for vortex interactions of the CGLE (\ref{cgle}) under
the homogeneous Neumann BC (\ref{neu-bc}) and compare
them with those obtained from the corresponding RDLs.
The initial condition $\psi_0^\vep$ in (\ref{ini_con}) is also
chosen as the form (\ref{initdbc0}), but with harmonic function $h(\bx)$ replaced as
$h_n(\bx)$ so that it will satisfy the Neumann BC as
\[\frac{\partial}{\partial \bn}h_n(\bx) =
-\frac{\partial}{\partial \bn}\sum_{l=1}^{M}
n_{l}\theta(\bx-\bx_{l}^0), \quad \bx\in\partial \Omega.\]
The CGLE (\ref{cgle}) with (\ref{neu-bc})  and
(\ref{initdbc0}) is solved by the numerical method TSCP presented in section \ref{sec: num_method}
in the following simulations.

\subsection{Single vortex}
\label{sec: cgle_NRfSVN}

In the subsection, we present numerical results of the motion of a single quantized vortex in
the CGLE dynamics with Neumann BC and its corresponding reduced dynamical
laws. We choose the parameters as $M=1$ and $n_1=1$ in (\ref{initdbc0}). Fig. \ref{fig: cgle-one-vortex-N}
shows the trajectory of the vortex center for different $\bx_1^0$ in (\ref{initdbc0})
when $\vep=\frac{1}{25}$ as well as time evolution of $\bx_1^{\vep}$ and $d_1^\vep$ for different $\vep$.

By observing Fig. \ref{fig: cgle-one-vortex-N} and ample numerical simulation results
(not shown here for brevity), we could see that: (i). The initial location of the vortex affects
the motion of the vortex significantly and this reflects the boundary effect coming from the Neumann BC.
(ii). If $\bx_1^0=(0,0)$, the vortex does not move at any time;
otherwise, the vortex does move and it will run out of the domain and
never come back. This phenomenon is quite different from the case
with Dirichlet BC in bounded domains, in which a single vortex can
never move out of the domain, or the case with the initial condition (\ref{initdbc0}) in the whole space,
in which a single vortex doesn't move at all, regardless of the initial location of the vortex for the both
cases.
(iii). As $\vep\rightarrow 0$, the dynamics of the vortex center
under the CGLE dynamics converges uniformly in time to
that of the RDLs very well before it exits the domain,
which verifies numerically the validation of the RDLs
in this case. Apparently, when the vortex center moves out of the domain,
the reduced dynamics laws are no longer valid.
Based on our extensive numerical experiments,
the motion of the vortex center from the RDLs
agrees with that from the CGLE dynamics qualitatively when $0<\vep<1$
and quantitatively when $0<\vep\ll 1$ before it moves out of the domain.

\begin{figure}[t!]
 \centerline{(a)\psfig{figure=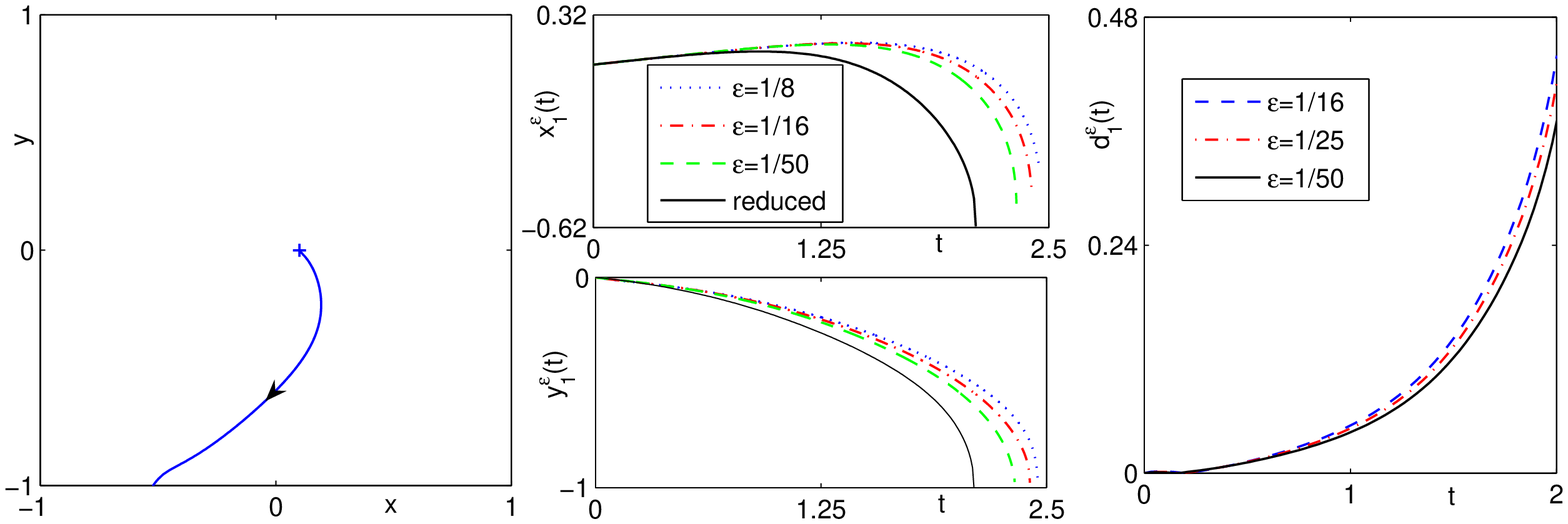,height=4cm,width=12.5cm,angle=0}}
 \centerline{(b)\psfig{figure=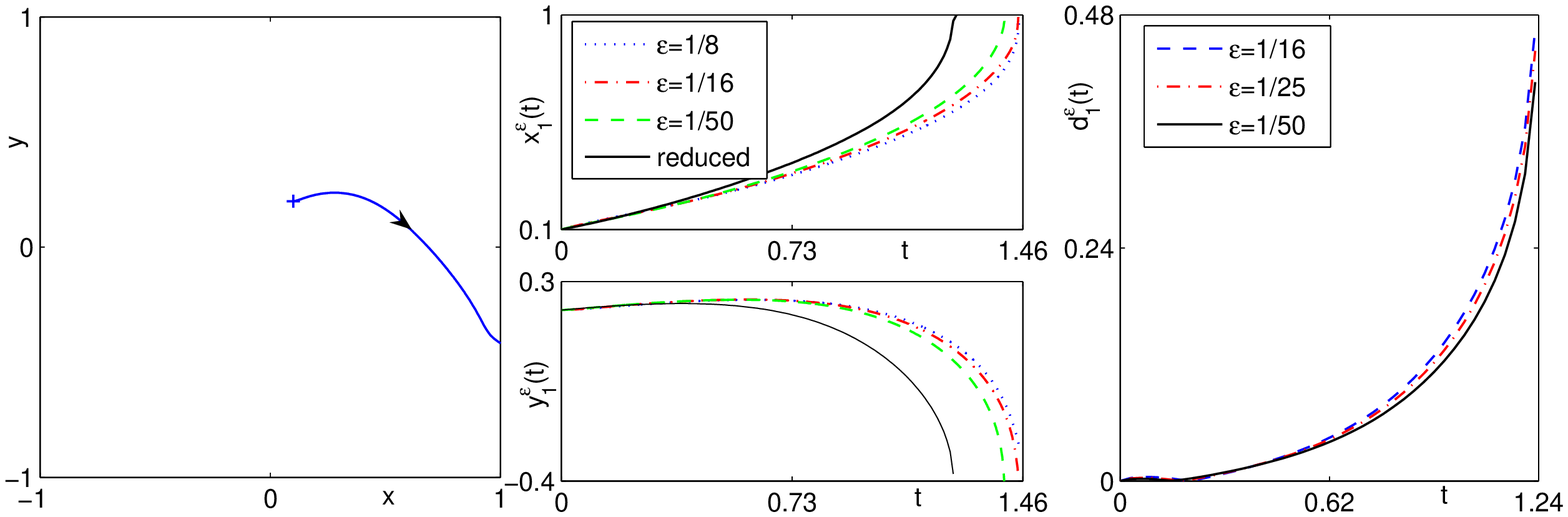,height=4cm,width=12.5cm,angle=0}}
 \caption{Trajectory of the vortex center when $\vep=\frac{1}{25}$ (left) as well as
 time evolution of $\bx_1^{\vep}$ (middle)  and $d_1^\vep$  for different $\vep$ (right) for the motion  of
a single vortex in CGLE under homogeneous Neumann BC with different
 $\bx_1^0$ in (\ref{initdbc0}) in section \ref{sec: cgle_NRfSVN}: (a) $\bx_1^0=(0.1,0)$, (b) $\bx_1^0=(0.1,0.2)$.}
\label{fig: cgle-one-vortex-N}
\end{figure}

\begin{figure}[t!]
\centerline{ (a)
 \psfig{figure=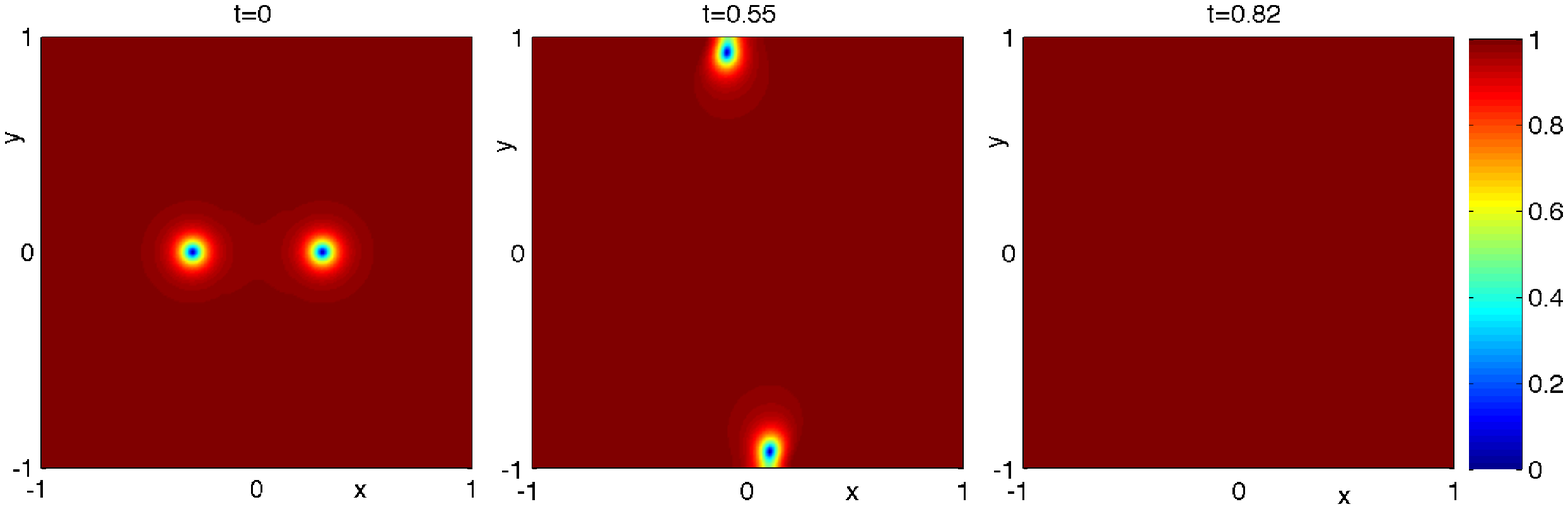,height=3.5cm,width=10.5cm,angle=0}
 (c)\psfig{figure=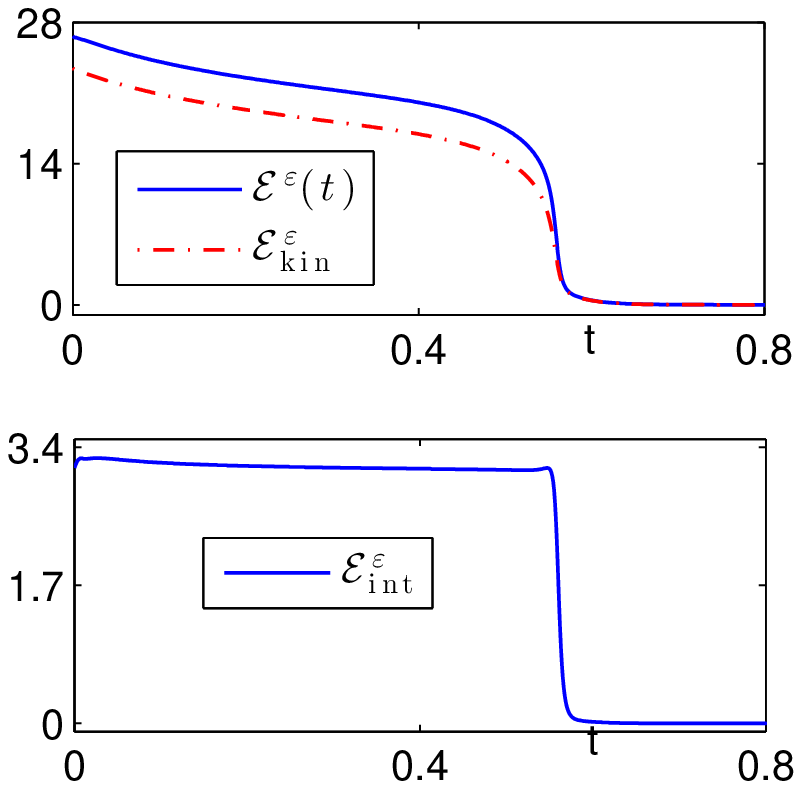,height=3.4cm,width=4cm,angle=0}
 }
\centerline{ (b)
\psfig{figure=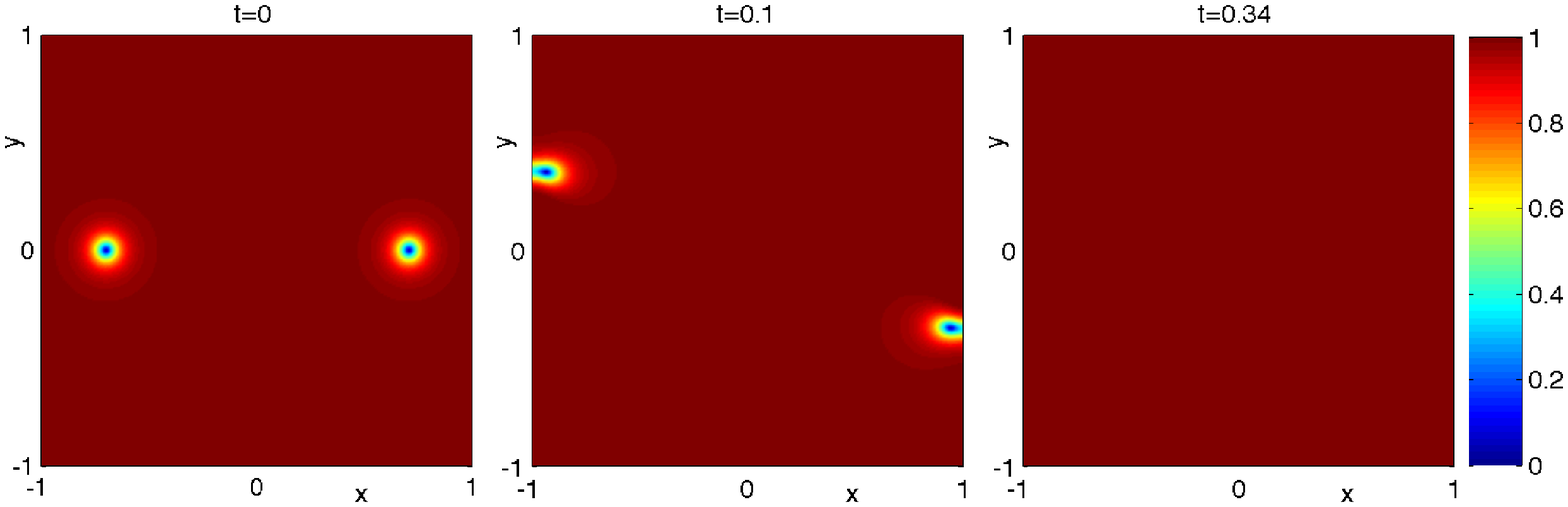,height=3.5cm,width=10.5cm,angle=0}
(d)\psfig{figure=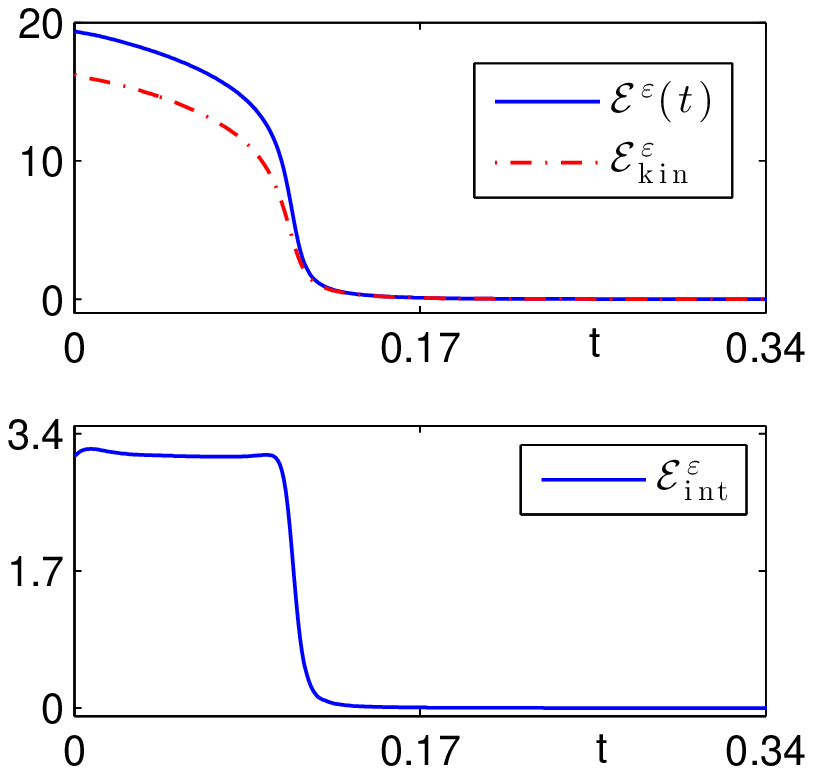,height=3.4cm,width=4cm,angle=0}
}
\caption{ Contour plots of $|\psi^{\vep}(\bx,t)|$ at different times
when $\vep=\frac{1}{25}$ ((a) \& (b)) and the corresponding time
evolution of the GL functionals ((c) \& (d)) for the motion  of vortex pair
in CGLE under homogeneous Neumann BC with different
 $d_0$ in (\ref{initdbc0}) in section \ref{sec: cgle_NRfVPN}:  top row:
 $d_0=0.3$, bottom row: $d_0=0.7$.   } \label{fig: cgle-vortex-pair-N-dens}
\end{figure}

\subsection{Vortex pair}
\label{sec: cgle_NRfVPN}

Here we present numerical results of the interaction of vortex pair
under the CGLE dynamics with Neumann BC and its corresponding reduced
dynamical laws. We choose the simulation parameters as $M=2$, $n_1=n_2=1$ and
$\bx_2^0=-\bx_1^0=(d_0,0)$
 with $0<d_0<1$ in (\ref{initdbc0}). Fig.
 \ref{fig: cgle-vortex-pair-N-dens} shows the contour plots of
 $|\psi^{\vep}(\bx,t)|$ at different times
when $\vep=\frac{1}{25}$, and Fig.
\ref{fig: cgle-vortex-pair-N} shows the trajectory of the vortex
pair when $\vep=\frac{1}{25}$ as well as time evolution of
$x_1^{\vep}(t)$ and  $d_1^{\vep}(t)$
for different $d_0$ in (\ref{initdbc0}).

\begin{figure}[t!]
 \centerline{
(a)\psfig{figure=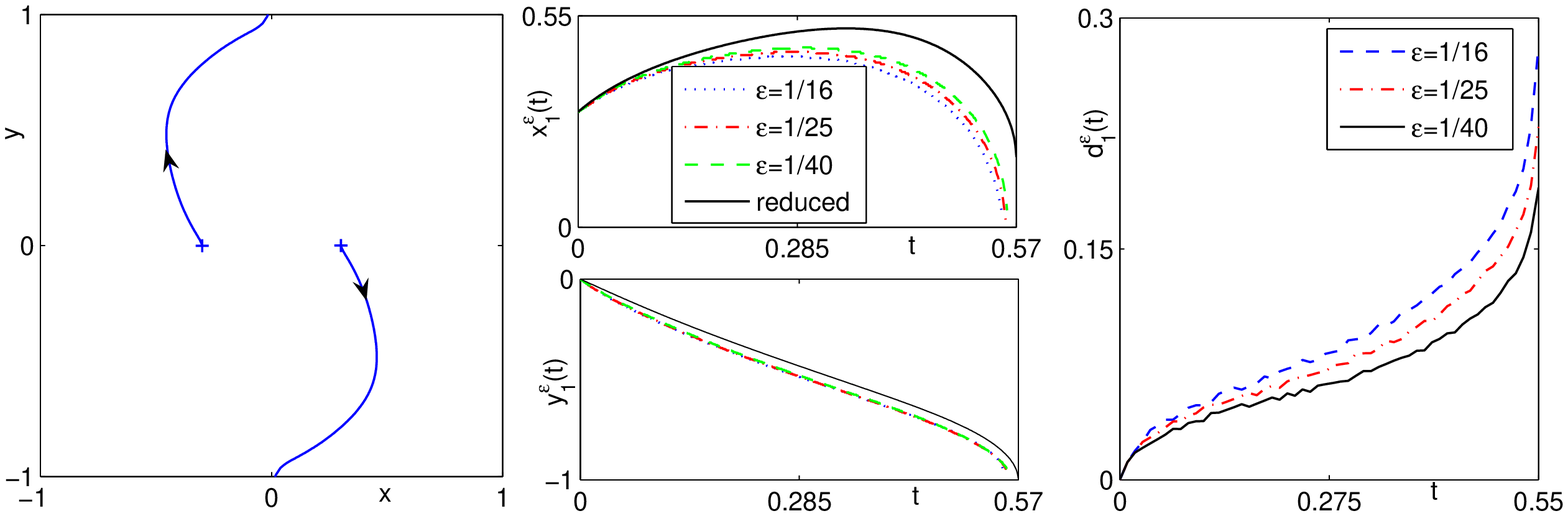,height=4cm,width=12.5cm,angle=0}}
 \centerline{
(b)\psfig{figure=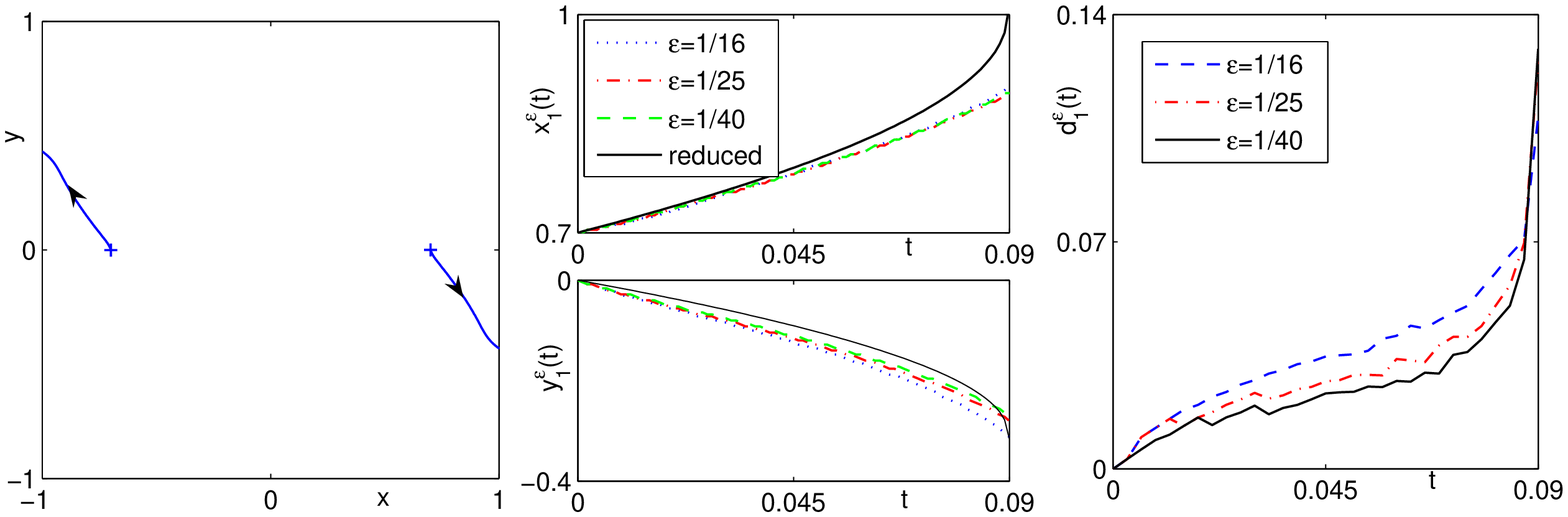,height=4cm,width=12.5cm,angle=0}}
\caption{ Trajectory of the vortex center when $\vep=\frac{1}{25}$ (left) as well as
 time evolution of $\bx_1^{\vep}$ (middle) and $d_1^\vep$  for different $\vep$ (right) for the motion  of
 vortex pair in CGLE under homogeneous Neumann BC with different
 $d_0$ in (\ref{initdbc0}) in section \ref{sec: cgle_NRfVPN}: (a) $d_0=0.3$, (b) $d_0=0.7$.}
 \label{fig: cgle-vortex-pair-N}
\end{figure}

From Figs. \ref{fig: cgle-vortex-pair-N-dens}-\ref{fig: cgle-vortex-pair-N}
and ample numerical results (not shown here for brevity),
we made the following observations: (i). The initial location of the vortex, i.e., the value of $d_0$ affects
the motion of the vortex significantly and this reflects the boundary effect coming from the Neumann BC.
(ii).  For the CGLE with $\vep$ fixed, there exists a sequence of critical values
$d_1^{c,\vep}>d_2^{c,\vep}>d_3^{c,\vep}>\cdots>d_k^{c,\vep}>\cdots$, which can
determine the escape approach about how the vortex pair moves out of the domain.
More precisely, if the value of $d_0$ falls into the interval $(d_{2n+1}^{c,\vep},d_{2n}^{c,\vep})$,
where $n=0,1,\ldots$, and $d_{0}^{c,\vep}=+\infty$, then the two vortices will move out of the domain from the side boundary;
otherwise, if it falls into the interval $(d_{2n+2}^{c,\vep},d_{2n+1}^{c,\vep})$,
they will move out of the domain from the top-bottom boundary.
For the RDL, there also exists such corresponding sequence of critical values
$\{d_k^{c,r}, k=0,1,\ldots\}$ which determine the trajectory of the vortex pair motion.
We note that it might be an interesting problem to find the values of those $d_k^{c,\vep}$ and $d_k^{c,r}$ and
study their convergence relations between them.
(iii). The motion of the vortex pair exhibits hybrid properties of that in the GLE dynamics and NLSE dynamics
with Neumann BC. As given by previous studies \cite{BT,BT1},
a vortex pair in the GLE dynamics will
always move outward along the line that connects with the two vortices and
finally they will move out of the domain, while in the NLSE dynamics, they will
always rotate around each other periodically.
Based on our extensive numerical results, we also found that under a fixed initial setup, the larger the value $\beta$ becomes,
the more rotations the vortex pair will do before they exit the domain, which means that as $\beta$ becomes larger,
the closer the motion in CGLE dynamics is to that in NLSE dynamics;  on the other hand, as the value $\alpha$ becomes larger,
the time when the vortex pair exits the domain becomes faster,
which also means that the motion in CGLE dynamics becomes closer to that in GLE dynamics.
This gives sufficient numerical evidence for our conclusion.
(iv). As $\vep\rightarrow 0$, the dynamics of the vortex pair under the CGLE dynamics converges uniformly in time to
that of the RDLs very well before either of the two vortices exit the domain,
which verifies numerically the validation of the RDLs
in this case.
(iv). During the dynamics evolution of CGLE, the GL functional
and its kinetic parts decrease as the time increases. They
do not change much when $t$ is small and change dramatically
when either of the two vortices move out of the domain. When $t\to \infty$, all the
three quantities converge to 0 (see Fig. \ref{fig: cgle-vortex-pair-N-dens} (c) \& (d)),
which indicates that a constant steady state have been reached in the form of  $\phi^\vep(\bx)=e^{ic_{_0}}$
for $\bx\in\mathcal{D}$ with $c_{_0}$ a constant.

\begin{figure}[t!]
\centerline{ (a)
 \psfig{figure=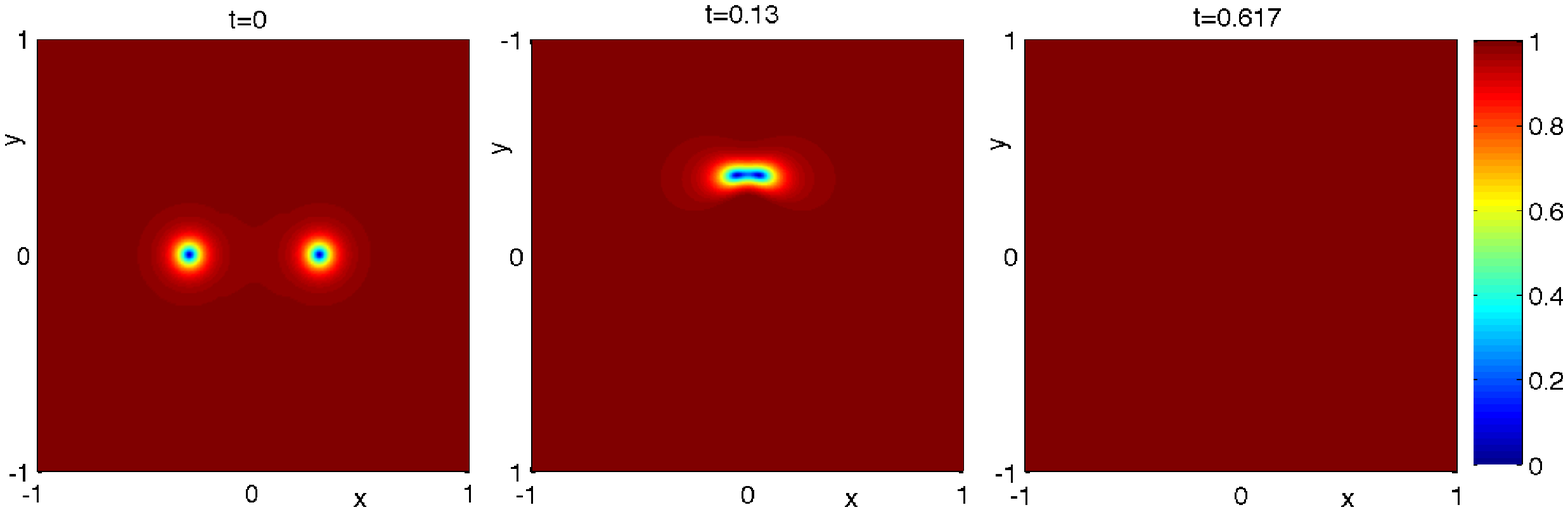,height=3.5cm,width=10.5cm,angle=0}
 (c)\psfig{figure=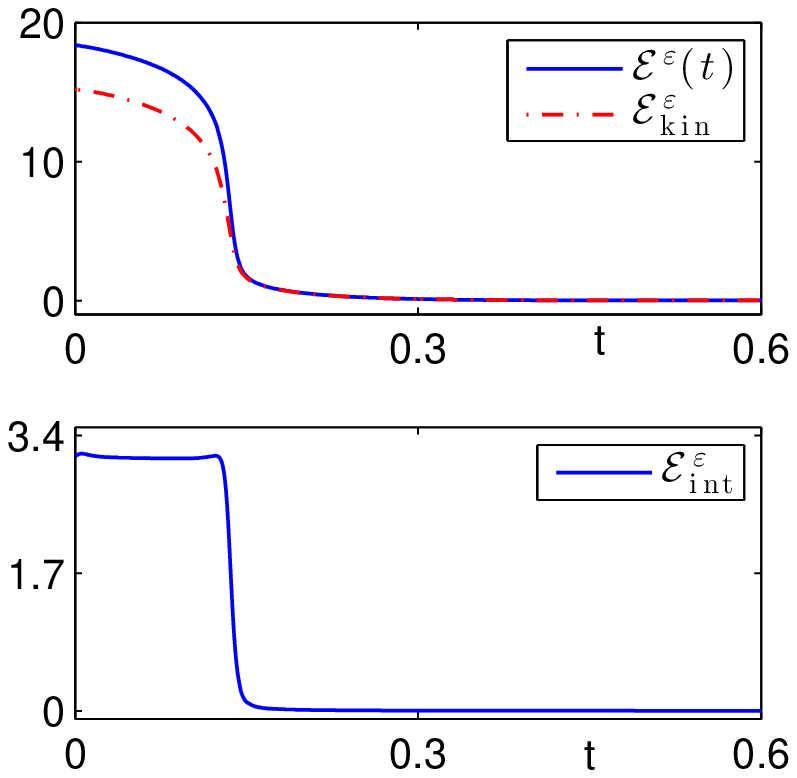,height=3.4cm,width=4cm,angle=0}
 }
\centerline{ (b)
\psfig{figure=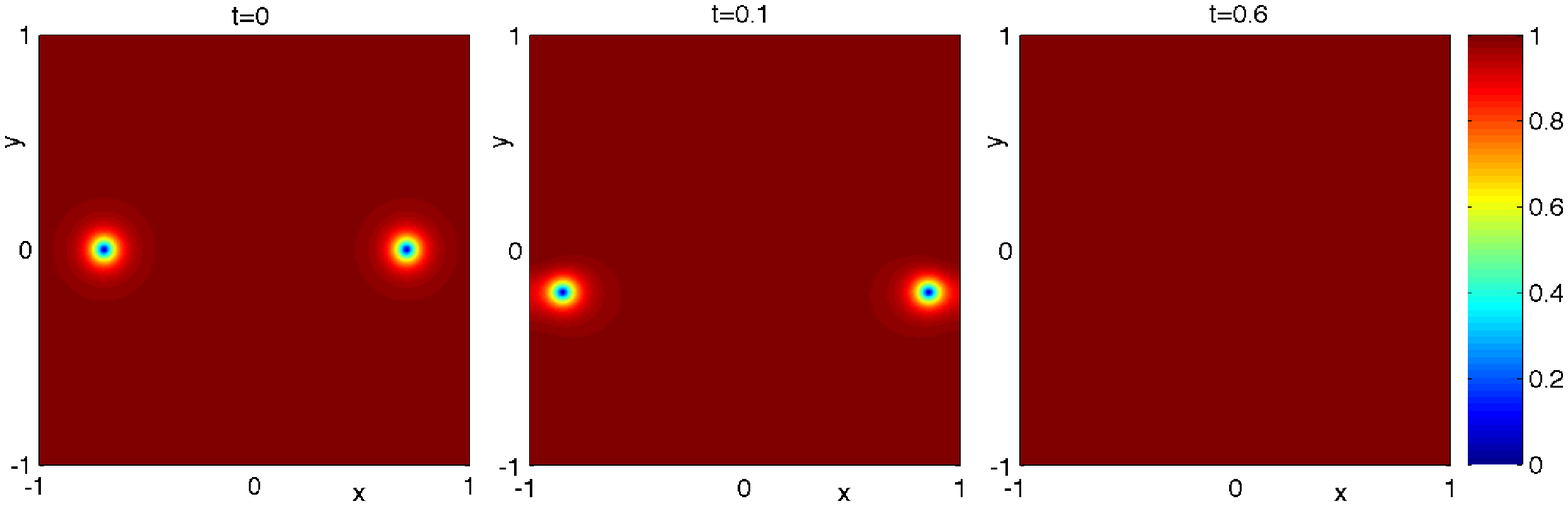,height=3.5cm,width=10.5cm,angle=0}
(d)\psfig{figure=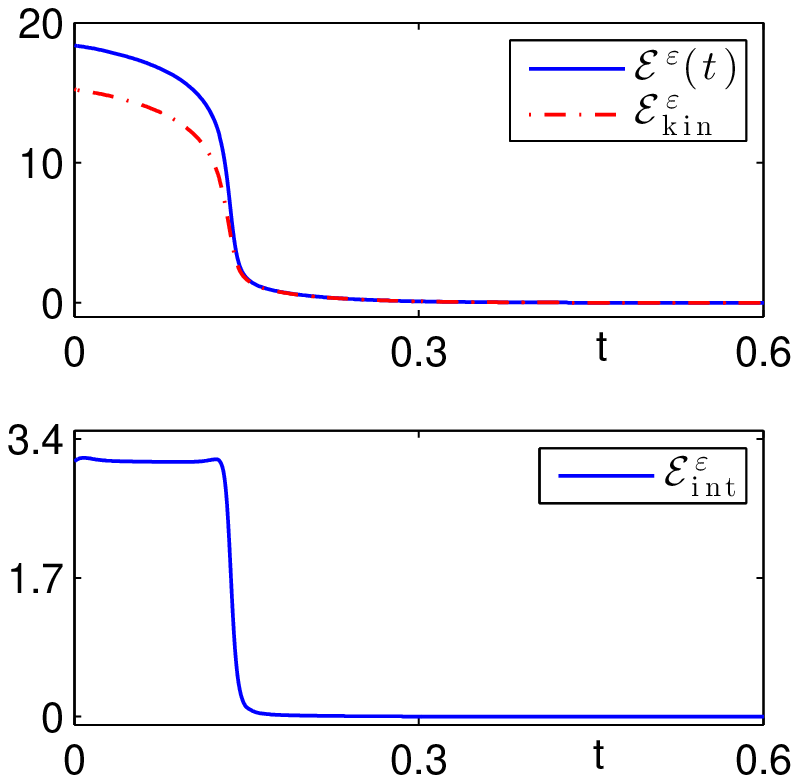,height=3.4cm,width=4cm,angle=0}
}
\caption{ Contour plots of $|\psi^{\vep}(\bx,t)|$ at different times
when $\vep=\frac{1}{25}$ and the corresponding time evolution of the GL functionals  for the motion  of
 vortex dipole in CGLE under homogeneous Neumann BC with different
 $d_0$ in (\ref{initdbc0}) in section \ref{sec: cgle_NRfVDN}:  top row: $d_0=0.3$, bottom row: $d_0=0.7$.   } \label{fig: cgle-vortex-dipole-N-dens}
\end{figure}

\subsection{Vortex dipole}
  \label{sec: cgle_NRfVDN}

Here we present numerical results of the interaction of vortex dipole
in the CGLE dynamics with Neumann BC and its corresponding reduced
dynamics. We choose the simulation parameters as $M=2$, $n_2=-n_1=1$ and
$\bx_2^0=-\bx_1^0=(d_0,0)$ with $0<d_0<1$ in (\ref{initdbc0}).
Fig. \ref{fig: cgle-vortex-dipole-N-dens} shows the contour plots of
 $|\psi^{\vep}(\bx,t)|$ at different times
when $\vep=\frac{1}{25}$, and Fig.
\ref{fig: cgle-vortex-dipole-N} depicts the trajectory of the vortex
pair when $\vep=\frac{1}{25}$ as well as time evolution of
$x_1^{\vep}(t)$ and  $d_1^{\vep}(t)$
for different $d_0$ in (\ref{initdbc0}).

\begin{figure}[t!]
 \centerline{
\psfig{figure=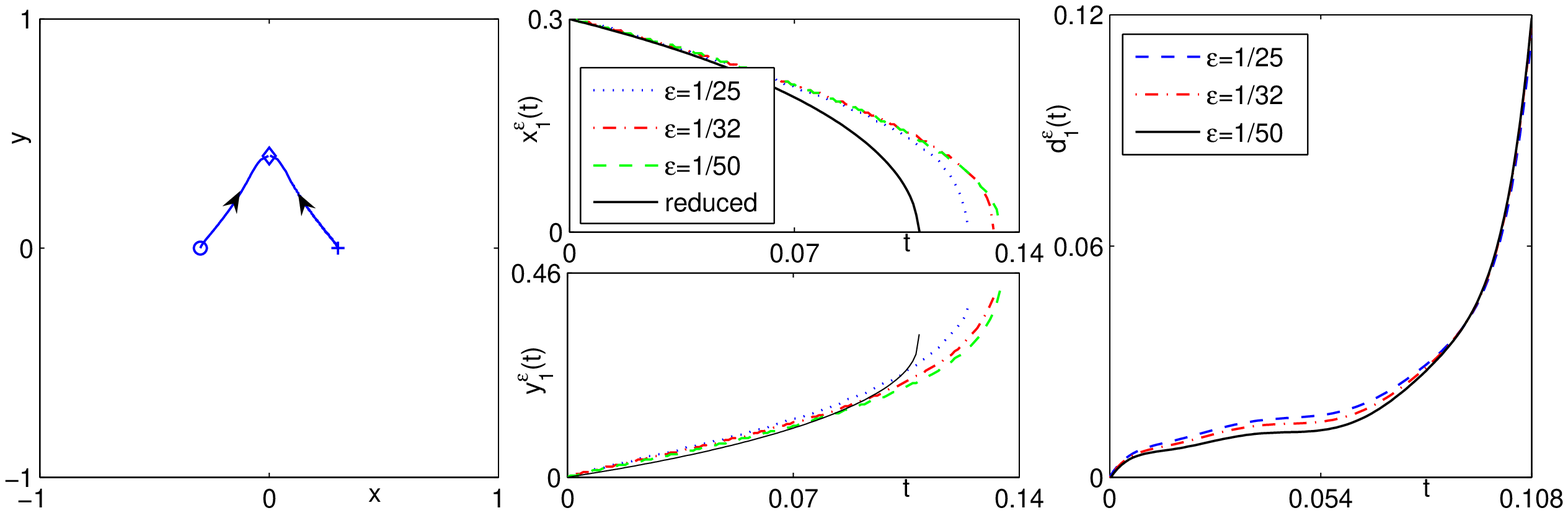,height=4cm,width=12.5cm,angle=0}}
 \centerline{
\psfig{figure=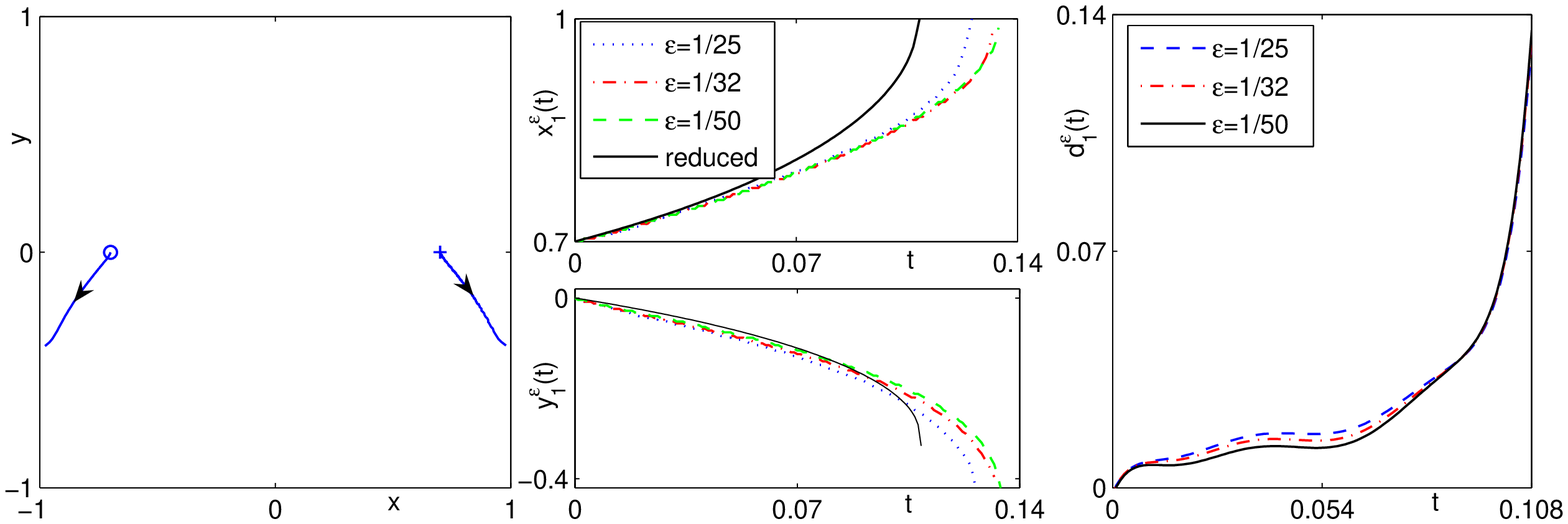,height=4cm,width=12.5cm,angle=0}}
\caption{ Trajectory of the vortex center when $\vep=\frac{1}{25}$ (left) as well as
 time evolution of $\bx_1^{\vep}$ (middle)  and $d_1^\vep$  for different $\vep$ (right) for the motion  of
 vortex dipole in CGLE under homogeneous Neumann BC with different
 $d_0$ in (\ref{initdbc0}) in section \ref{sec: cgle_NRfVPN}: (a) $d_0=0.3$, (b) $d_0=0.7$.   } \label{fig: cgle-vortex-dipole-N}
\end{figure}

From Fig. \ref{fig: cgle-vortex-dipole-N-dens} and \ref{fig: cgle-vortex-dipole-N}
and ample numerical
results (not shown here for brevity),
we can make the following observations for the interaction
of vortex pair under the NLSE dynamics with homogeneous Neumann
BC:
(i). The initial location of the vortex, i.e., the value of $d_0$ affects
the motion of the vortex significantly.
(ii). For the CGLE with $\vep$ fixed, there exists a critical value $d_c^{\vep}$
 such that: if $d_0>d_c^{\vep}$, the two vortices will exit the domain from the side boundary;
 otherwise, they will merge somewhere in the domain.
 For the RDL, there also exists such corresponding critical values $d_c^{r}$.
 We also note that it might
 be an interesting problem to find those values $d_c^{\vep}$ and  $d_c^{r}$,
 and to study their convergence relation.
(iii). As $\vep\rightarrow 0$, the dynamics of the two vortex centers under the CGLE dynamics converge uniformly in time to
that of the RDLs very well before they move out of the domain or merge with each other,
which verifies numerically the validation of the RDLs in this case.

\begin{figure}[htbp]
\centerline{\psfig{figure=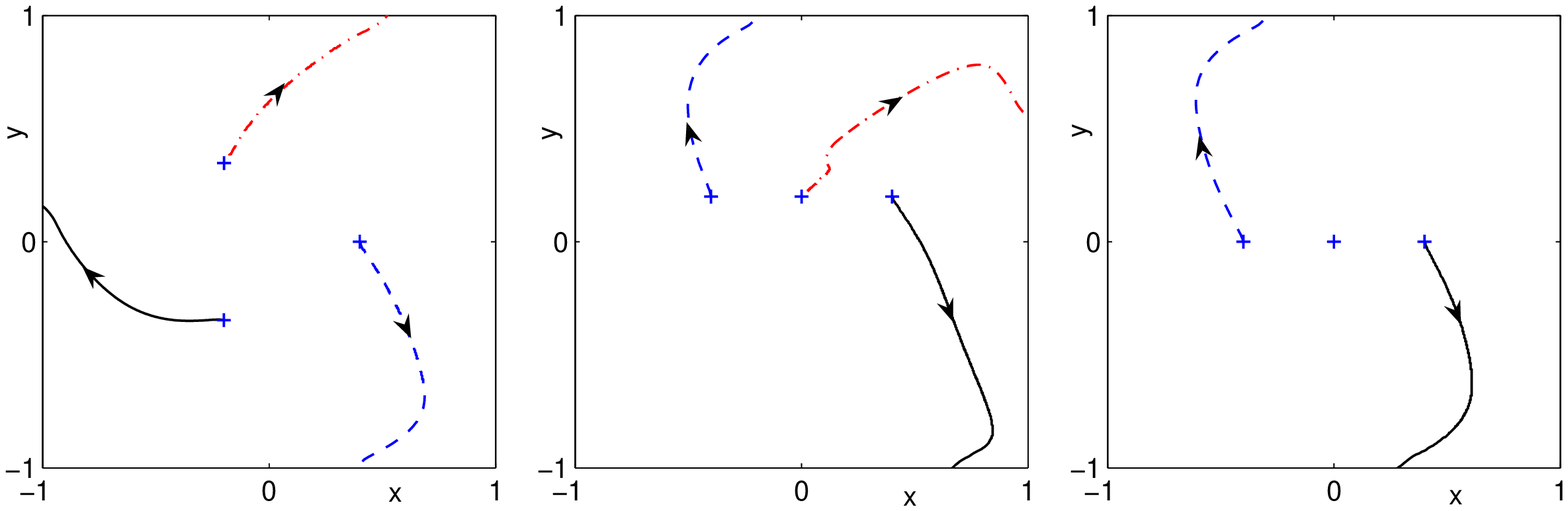,height=4cm,width=12.5cm,angle=0}}
\centerline{\psfig{figure=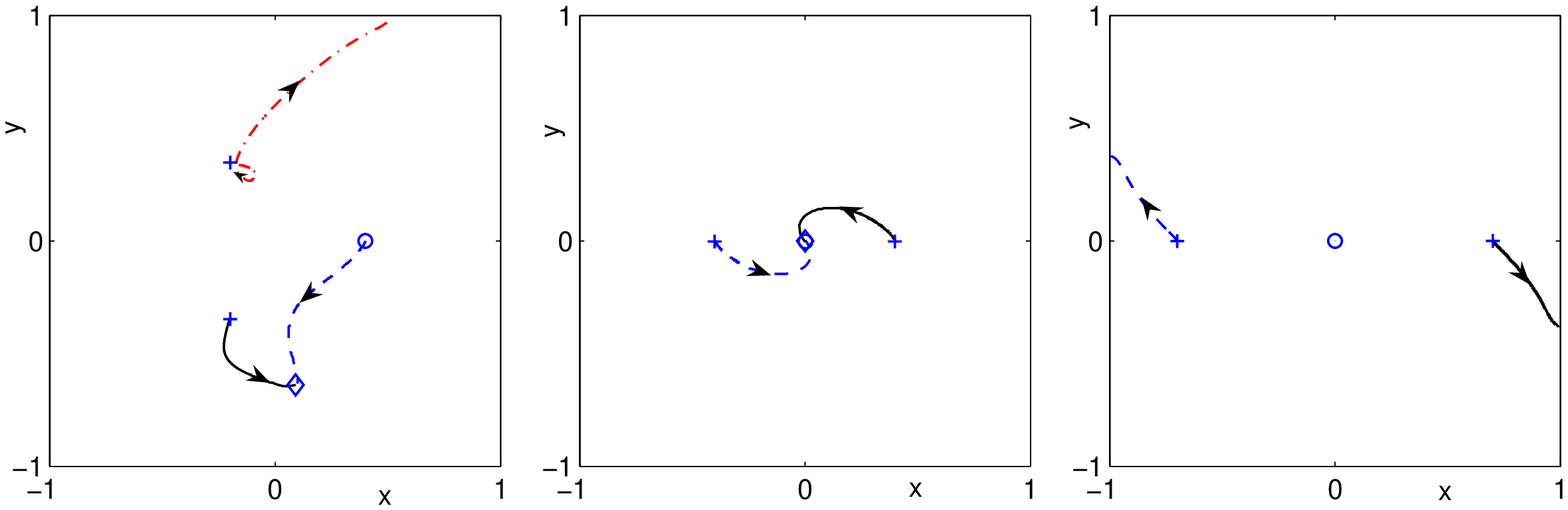,height=4cm,width=12.5cm,angle=0}}
\centerline{\psfig{figure=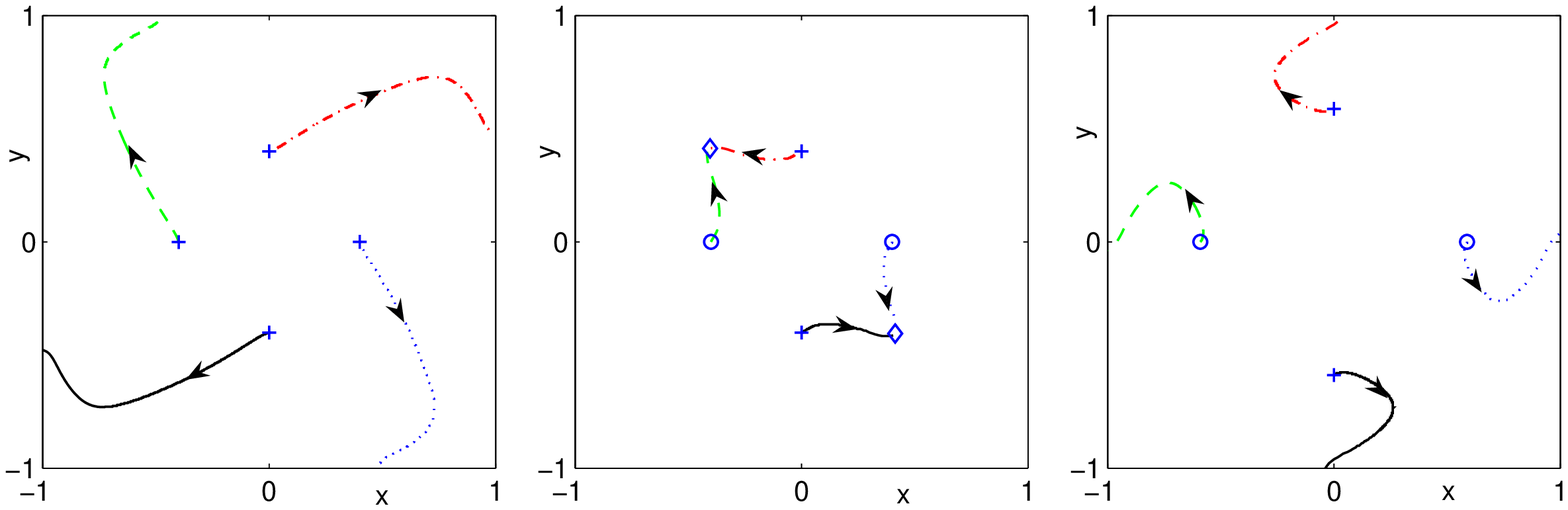,height=4cm,width=12.5cm,angle=0}}
\centerline{\psfig{figure=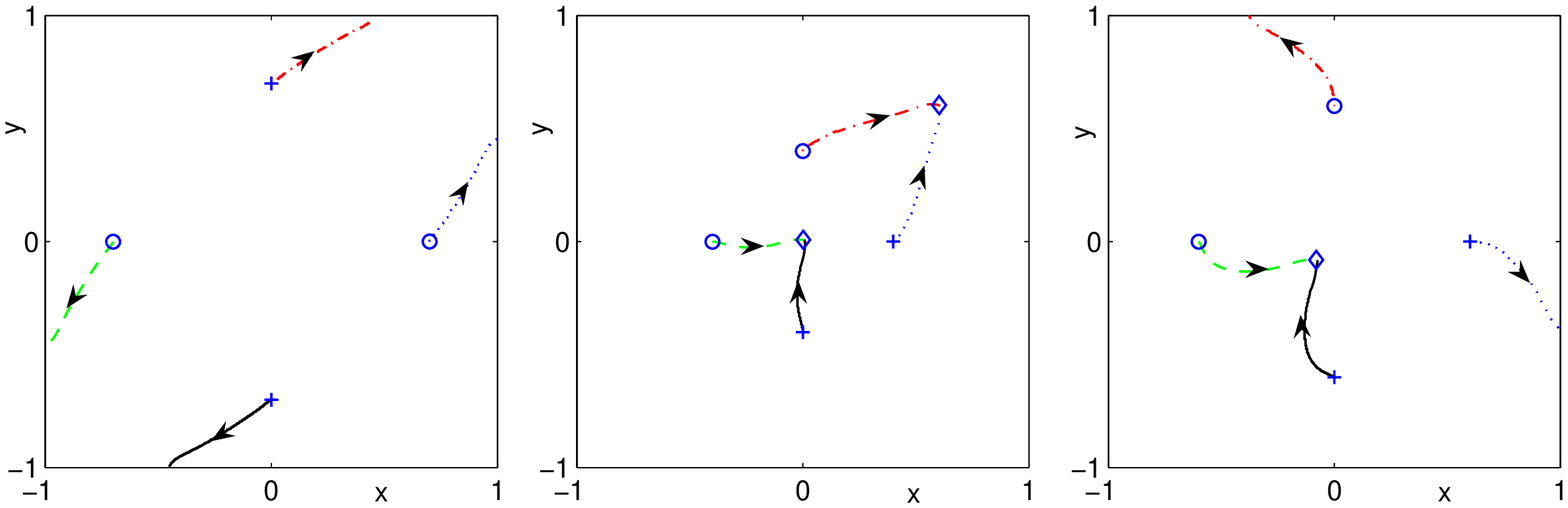,height=4cm,width=12.5cm,angle=0}}
\centerline{\psfig{figure=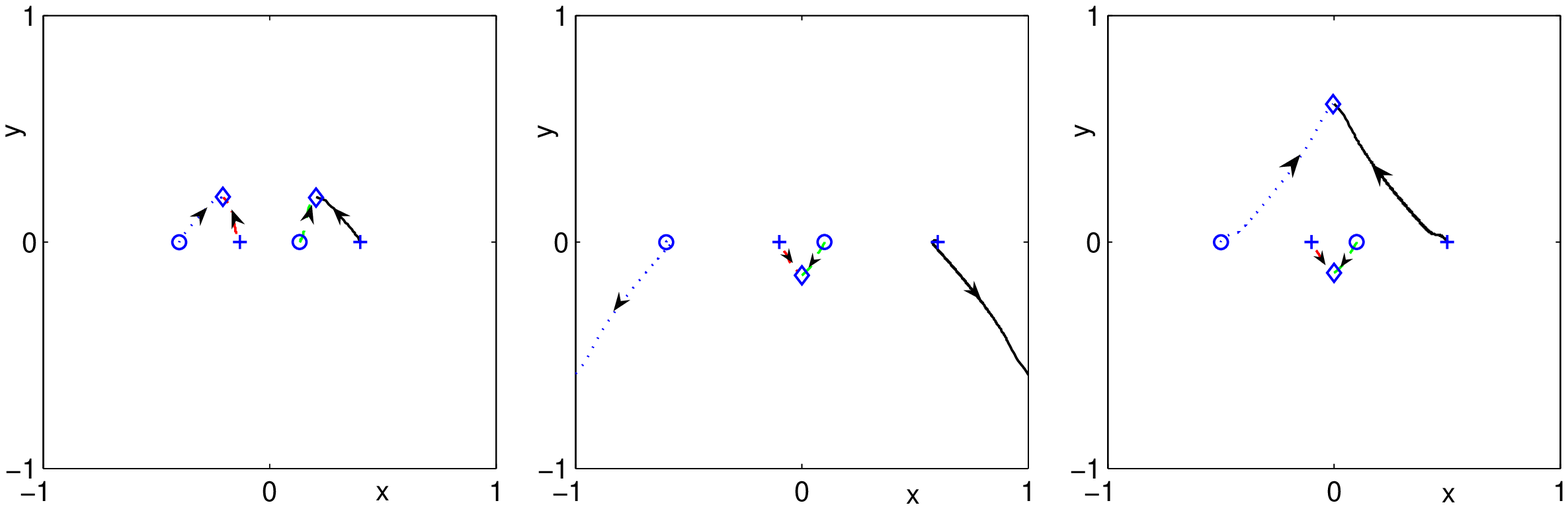,height=4cm,width=12.5cm,angle=0}}
 \caption{ Trajectory of vortex centers  for the interaction of
different vortex lattices in CGLE under  Neumman BC
with $\vep=\frac{1}{32}$  for  cases I-IX (from left to right and then from top to bottom)
in section \ref{sec: cgle_NRfVLN}. }
\label{fig: cgle-vortex-lattice-N}
 \end{figure}

\subsection{Vortex lattice}
  \label{sec: cgle_NRfVLN}

Here we present numerical results of the interaction of vortex lattices under
the CGLE dynamics with Neumann BC. We consider the following 15 cases:

case I. $M=3$, $n_1=n_2=n_3=1$,
		 $\bx_1^0=(0.4, 0)$,
         $\bx_2^0=(-0.2,\frac{\sqrt{3}}{5})$,
         $\bx_3^0=(-0.2,-\frac{\sqrt{3}}{5})$;
case II. $M=3$,  $n_1=n_2=n_3=1$, $\bx_1^0=(-0.4,0.2)$,
		 $\bx_2^0=(0,0.2)$, $\bx_3^0=(0.4,0.2)$;
case III. $M=3$, $n_1=n_2=n_3=1$, $\bx_1^0=(-0.4,0)$,
		 $\bx_2^0=(0,0)$, $\bx_3^0=(0.4,0)$; 		
case IV. $M=3$, $-n_1=n_2=n_3=1$,
         $\bx_1^0=(0.4, 0)$;
		$\bx_2^0=(-0.2,\frac{\sqrt{3}}{5})$,
         $\bx_3^0=(-0.2,-\frac{\sqrt{3}}{5})$;
case V. $M=3$, $-n_2=n_1=n_3=1$,
		$\bx_1^0=(-0.4,0)$,
		$\bx_2^0=(0,0)$,
		$\bx_3^0=(0.4,0)$;
case VI. $M=3$, $-n_2=n_1=n_3=1$,
		$\bx_1^0=(-0.7,0)$,
		$\bx_2^0=(0,0)$,
		$\bx_3^0=(0.7,0)$;
case VII. $M=4$, $n_1=n_2=n_3=n_4=1$,
		  $\bx_1^0=(0.4,0)$,
		  $\bx_2^0=(0,0.4)$,
		  $\bx_3^0=(-0.4,0)$,
		  $\bx_4^0=(0,-0.4)$;
case VIII. $M=4$, $n_1=n_3=-1$, $n_2=n_4=1$,
		  $\bx_1^0=(0.4,0)$,
		  $\bx_2^0=(0,0.4)$,
		  $\bx_3^0=(-0.4,0)$,
		  $\bx_4^0=(0,-0.4)$;		  		
case IX.  $M=4$, $n_1=n_3=-1$, $n_2=n_4=1$,
		  $\bx_1^0=(0.59,0)$,
		  $\bx_2^0=(0,0.59)$,
		  $\bx_3^0=(-0.59,0)$,
		  $\bx_4^0=(0,-0.59)$;	
case X.  $M=4$, $n_1=n_3=-1$, $n_2=n_4=1$,
		  $\bx_1^0=(0.7,0)$,
		  $\bx_2^0=(0,0.7)$,
		  $\bx_3^0=(-0.7,0)$,
		  $\bx_4^0=(0,-0.7)$;		  		  	
case XI.  $M=4$, $n_2=n_3=-1$, $n_1=n_4=1$,
		  $\bx_1^0=(0.4,0)$,
		  $\bx_2^0=(0,0.4)$,
		  $\bx_3^0=(-0.4,0)$,
		  $\bx_4^0=(0,-0.4)$;  		  		  		
case XII.  $M=4$, $n_2=n_3=-1$, $n_1=n_4=1$,
		  $\bx_1^0=(0.6,0)$,
		  $\bx_2^0=(0,0.6)$,
		  $\bx_3^0=(-0.6,0)$,
		  $\bx_4^0=(0,-0.6)$;  		  		
case XIII.  $M=4$, $n_1=n_3=-1$, $n_2=n_4=1$,
		  $\bx_1^0=(0.4,0)$,
		  $\bx_2^0=(-0.4/3,0)$,
		  $\bx_3^0=(0.4/3,0)$,
		  $\bx_4^0=(0.4,0)$;			  		
case XIV.  $M=4$, $n_1=n_3=-1$, $n_2=n_4=1$,
		  $\bx_1^0=(0.4,0)$,
		  $\bx_2^0=(-0.4/3,0)$,
		  $\bx_3^0=(0.4/3,0)$,
		  $\bx_4^0=(0.4,0)$;			  		
case XV.  $M=4$, $n_1=n_3=-1$, $n_2=n_4=1$,
		  $\bx_1^0=(-0.6,0)$,
		  $\bx_2^0=(-0.1,0)$,
		  $\bx_3^0=(0.1,0)$,
		  $\bx_4^0=(0.6,0)$.

Fig. \ref{fig: cgle-vortex-lattice-N} shows the trajectory of the vortex centers for the above 15 cases when $\vep=\frac{1}{32}$,
and Fig. \ref{fig: cgle-vortex-lattice-dens-steady-N} depicts the contour plots of $|\psi^\vep|$ for the initial data
and corresponding steady states for  cases I, III, V, VI, VII and XIV.
From Figs. \ref{fig: cgle-vortex-lattice-N} and \ref{fig: cgle-vortex-lattice-dens-steady-N}
and ample numerical experiments (not shown here for brevity),
we can make the following observations:
(i). The dynamics and interaction of vortex lattices under the CGLE dynamics with Dirichlet BC
depends on its initial alignment of the lattice, geometry of the domain $\mathcal{D}$.
(ii). For a lattice of $M$ vortices,
if they have the same index, then at least $M-1$ vortices will move out of the domain at finite time
and no collision will happen at any time;
On the other hand, if they have opposite index, collision will happen at finite time.
After collisions, the leftover vortices will continue to move and at most one vortex may be left in
the domain.
When $t$ is sufficiently large, in most cases, no vortex
can be left in the domain; but when the geometry and initial setup are properly set to be symmetric and $M$ is odd,
there maybe one vortex left in the domain.
(iii). If finally no vortex can be left in the domain, the GL functionals will always vanish as $t\rightarrow\infty$,
which indicates that the final steady state always admits the form of $\phi^{\vep}(\bx)=e^{ic_0}$ for $\bx\in\mathcal{D}$ with
$c_0$ a real constant.

\begin{figure}[t!]
\centerline{(a)\psfig{figure=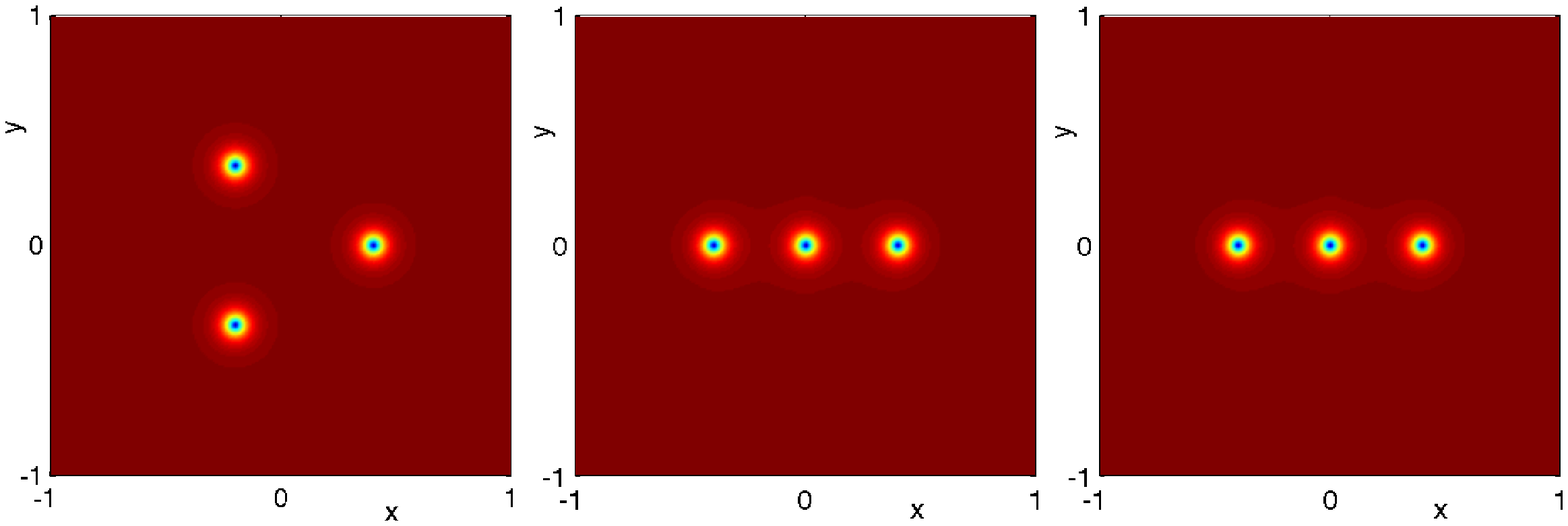,height=4cm,width=12.5cm,angle=0}}
\centerline{(b)\psfig{figure=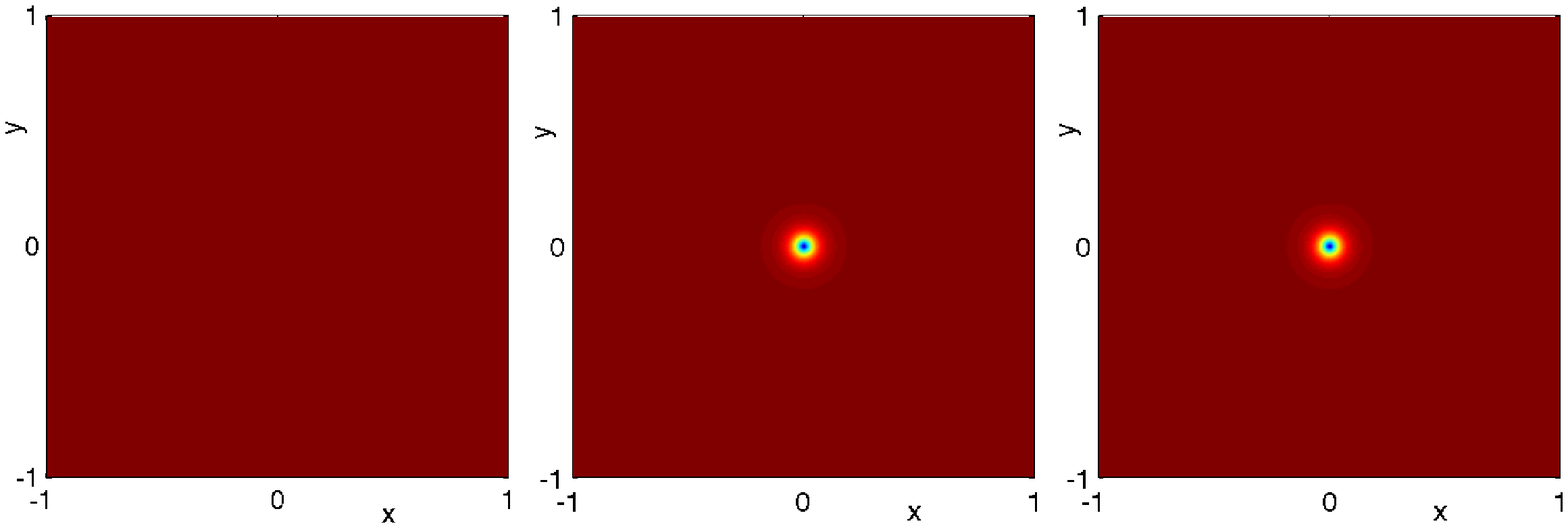,height=4cm,width=12.5cm,angle=0}}
\centerline{(c)\psfig{figure=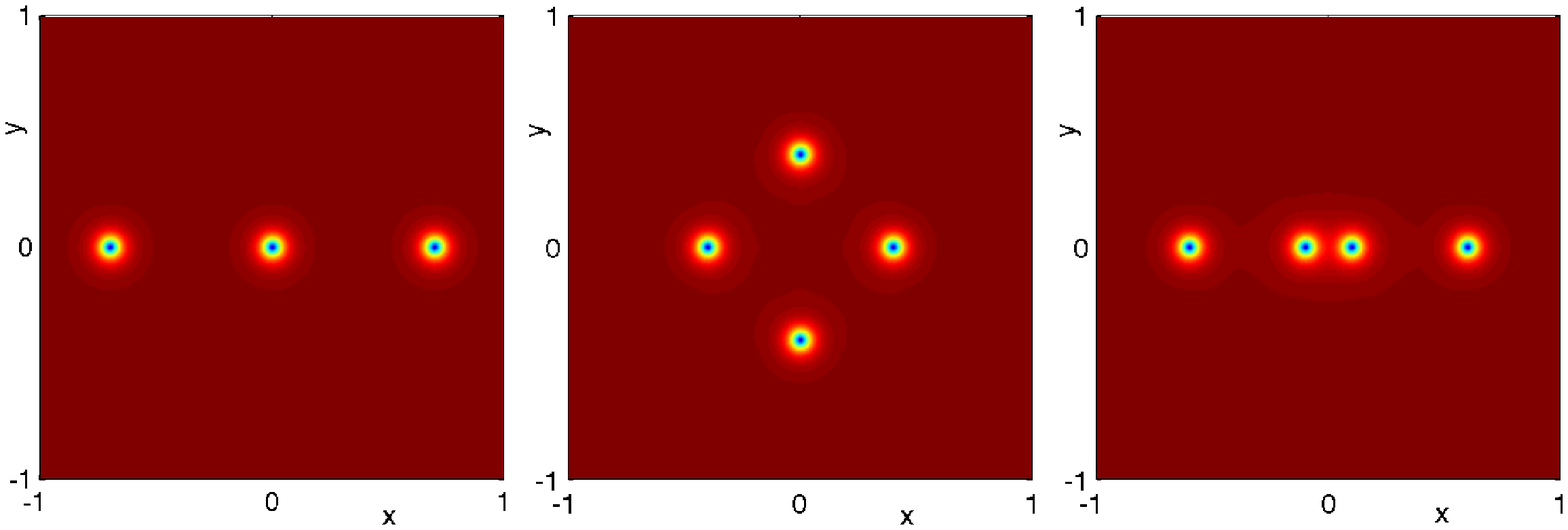,height=4cm,width=12.5cm,angle=0}}
\centerline{(d)\psfig{figure=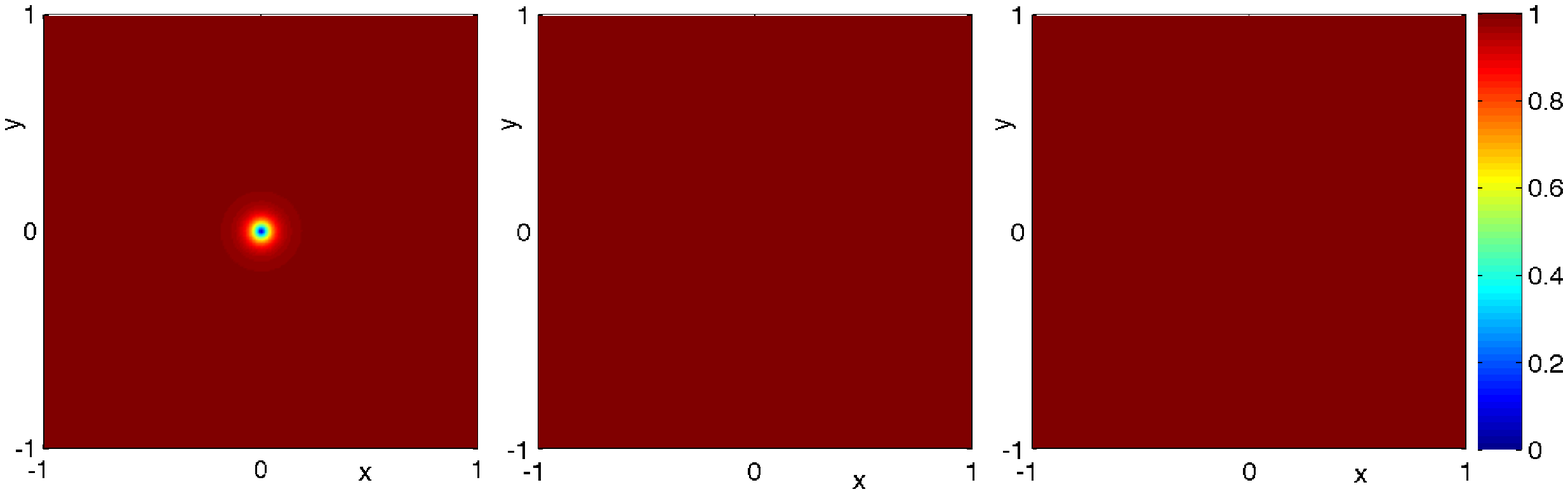,height=4cm,width=12.5cm,angle=0}}
 \caption{ Contour plots of $|\psi^\vep(\bx,t)|$ for the initial data
 ((a) \& (c)) and corresponding steady states ((b) \& (d)) of vortex
 lattice  in CGLE dynamics under  Neumman BC
with $\vep=\frac{1}{32}$ and for  cases I, III, V, VI, VII and XIV (from left to right and then from top to bottom)
in section \ref{sec: cgle_NRfVLN}. }
\label{fig: cgle-vortex-lattice-dens-steady-N}
 \end{figure}

\section{Conclusion}\label{sec: cgle_con}

In this paper, we proposed efficient and accurate numerical methods
to simulate  complex Ginzburg-Landau
equation (CGLE) with a dimensionless parameter $0<\vep<1$ on
bounded domains with either Dirichlet or homogenous Neumann
BC and its corresponding reduced dynamical laws (RDLs). By these numerical methods,
we studied numerically vortex dynamics and interaction in the
CGLE  and compared them with those obtained from the corresponding RDLs under
different initial setups. To some extent, we found that vortex dynamics in the CGLE is
a hybrid of that in GLE and NLSE, which can be reflected from the fact that CGLE is a
combination equation between GLE and NLSE.

Based on our extensive numerical
results, we verified that the dynamics of
vortex centers under the CGLE dynamics converges to
that of the RDLs when $\vep\to 0$
before they collide and/or move out of the domain.
Apparently, when the vortex center moves out of the domain,
the reduced dynamics laws are no longer valid; however, the dynamics and interaction
of quantized vortices are still physically interesting and they can be
obtained from the direct numerical
simulations for the CGLE with fixed $\vep>0$ even after they collide and/or move
out of the domain.
We also identified the parameter
regimes where the RDLs agree
with qualitatively and/or quantitatively as well as
fail to agree with those from the CGLE dynamics.
Some very interesting nonlinear phenomena related to the quantized
vortex interactions in the CGLE
were also observed from our direct numerical simulation
results of CGLE. Different steady state patterns
of vortex lattices under the CGLE dynamics were obtained
numerically. From our numerical results, we observed that both
boundary conditions and domain geometry affect
significantly on vortex dynamics and interaction, which can exhibit different
interaction patterns compared with those in the whole space case \cite{YZWBQD1,YZWBQD2}.

\section*{Acknowledgements}

The authors would like to express their sincere thanks to Dr. Dong Xuanchun for
stimulating discussions. This work was supported by the Singapore A*STAR SERC Grant No. 1224504056.
Part of this work was done when the first author was visiting IMS at NUS and the
second author was visiting IPAM at UCLA in 2012.

\end{document}